\DeclareMathAlphabet{\mathpzc}{OT1}{pzc}{m}{it}
\newtheorem{theorem}{Theorem}[section]
\newtheorem{lemma}[theorem]{Lemma}
\newtheorem{proposition}[theorem]{Proposition}
\newtheorem{corollary}[theorem]{Corollary}
\newtheorem{fact}[theorem]{Fact}
\newtheorem{claim}[theorem]{Claim}
\newtheorem{maintheorem}[theorem]{Main Theorem}
\theoremstyle{definition}
\newtheorem{definition}[theorem]{Definition}
\theoremstyle{remark}
\newtheorem{remark}{Remark}
\newtheorem{notation}{Notation}
\newtheorem{question}{Question}
\def\hook{\upharpoonright}
\def\forces{\Vdash}
\newfont{\ssi}{cmssi12 at 12pt}
\newenvironment{ea*}{\begin{eqnarray*}}{\end{eqnarray*}}
\newcommand{\calA}{\mathcal{A}}
\newcommand{\calB}{\mathcal{B}}
\renewcommand{\phi}{\varphi}
\newcommand{\ZFC}{\ensuremath{\mathsf{ZFC}}\xspace}
\newcommand{\ZF}{\ensuremath{\mathsf{ZF}}\xspace}
\newcommand{\proves}{\vdash}
\def\<#1>{\langle#1\rangle}
\renewcommand{\P}{{\mathord{\mathbb P}}}
\newcommand{\Q}{{\mathord{\mathbb Q}}}
\newcommand{\MP}{\ensuremath{\mathsf{MP}}}
\newcommand{\ColNothing}{\mathrm{Col}}
\newcommand{\Col}[1]{\ColNothing(#1)}
\newcommand{\MPColNothing}[1]{\MP_{\Col{\dot{\kappa}}}}
\newcommand{\CH}{\ensuremath{\mathsf{CH}}\xspace}
\def\hook{\upharpoonright}
\def\forces{\Vdash}
\def\Me{\mathcal M}
\def\Null{\mathcal N}
\def\ZFC{\mathsf{ZFC}}
\def\ZF{\mathsf{ZF}}
\def\baire{\omega^\omega}
\def\mfc{\mathfrak{c}}
\def \mfd{\mathfrak{d}}
\def\mfa{\mathfrak{a}}
\def\mfu{\mathfrak{u}}
\def\mfi{\mathfrak{i}}
\def\CH{\mathsf{CH}}
\def\calA{\mathcal A}
\def\scrU{\mathscr U}
\def\proves{\vdash}
\begin{document}

\title{Projective Well Orders and Coanalytic Witnesses}

\author[Bergfalk]{Jeffrey Bergfalk}
\address[J. ~Bergfalk]{Institut f\"{u}r Mathematik, Kurt G\"odel Research Center, Universit\"{a}t Wien, Kolingasse 14-16, 1090 Wien, AUSTRIA}
\email{jeffrey.bergfalk@univie.ac.at}

\author[Fischer]{Vera Fischer}
\address[V. ~Fischer]{Institut f\"{u}r Mathematik, Kurt G\"odel Research Center, Universit\"{a}t Wien, Kolingasse 14-16, 1090 Wien, AUSTRIA}
\email{vera.fischer@univie.ac.at}

\author[Switzer]{Corey Bacal Switzer}
\address[C.~B.~Switzer]{Institut f\"{u}r Mathematik, Kurt G\"odel Research Center, Universit\"{a}t Wien, Kolingasse 14-16, 1090 Wien, AUSTRIA}
\email{corey.bacal.switzer@univie.ac.at}

\thanks{\emph{Acknowledgements:} The authors would like to thank the
Austrian Science Fund (FWF) for the generous support through grant number Y1012-N35.}
\subjclass[2000]{03E17, 03E35, 03E50}

\maketitle

\begin{abstract}
We further develop a forcing notion known as \emph{Coding with Perfect Trees} and show that this poset preserves, in a strong sense, definable $P$-points, definable tight MAD families and definable selective independent families. As a result, we obtain a model in which $\mathfrak{a}=\mathfrak{u}=\mathfrak{i}=\aleph_1<2^{\aleph_0}=\aleph_2$,  each of $\mathfrak{a}$, $\mathfrak{u}$, $\mathfrak{i}$ has a $\Pi^1_1$ witness and there is a $\Delta^1_3$ well-order of the reals. Note that both the complexity of the witnesses of the above combinatorial cardinal characteristics, as well as the complexity of the well-order are optimal. In addition, we show that the existence of a $\Delta^1_3$ well-order of the reals is consistent with $\mathfrak{c}=\aleph_2$ and each of the following: $\mathfrak{a}=\mathfrak{u}<\mathfrak{i}$, $\mathfrak{a}=\mathfrak{i}<\mathfrak{u}$, $\mathfrak{a}<\mathfrak{u}=\mathfrak{i}$, where the smaller cardinal characteristics have co-analytic witnesses. 

Our methods allow the preservation of only sufficiently definable witnesses, which significantly differs from other preservation results of this type.
\end{abstract}

\section{Introduction}
In recent literature an important line of research on the border between combinatorial and descriptive set theory has developed by looking at various combinatorial sets of reals of optimal projective complexity. Beginning with Miller's seminal \cite{Millerpi11}, this includes the study of witnesses to various cardinal characteristics defined as the maximal set of reals with such and such property; see also \cite{DefMIF}, \cite{FFST20}. Another important example is the study of the possible complexity of well-orderings of reals; see \cite{FF10} and the references therein.

In the present article we contribute to this project by exhibiting a model where a number of different combinatorial sets of reals all have optimal projective complexity simultaneously. Specifically our main theorem is as follows.

\begin{maintheorem}
It is consistent that $\mfa= \mfu = \mfi = \aleph_1 < 2^{\aleph_0} = \aleph_2$, $\mfu$, $\mfa$, $\mfi$ have $\Pi^1_1$ (and hence optimal) witnesses of cardinality $\aleph_1$ and there is a $\Delta^1_3$ well-order of the reals (also optimal).
\label{mainthm1}
\end{maintheorem}

The main techniques involved in proving Main Theorem \ref{mainthm1} are the forcing from \cite{FF10} for forcing $2^{\aleph_0} = \aleph_2$ alongside a $\Delta^1_3$ well-order and the study of stronger versions of maximality for almost disjoint families, filter bases and independent families and proving preservation theorems relating to these which can be used in our context. An interesting feature of our analysis is that, while preservation theorems for these classes already existed, we often had to modify them, and restrict to ``definable" witnesses in order to preserve any maximal set along our intended iterations. A by-product of our analysis is that we can also arrange a variety of cardinal characteristic constellations to hold alongside optimal witnesses.


\begin{maintheorem}
The following cardinal characteristic constellations are consistent with a $\Delta^1_3$ well-order of the reals, $2^{\aleph_0} = \aleph_2$ and all cardinal characteristics of size $\aleph_1$ below having $\Pi^1_1$ witnesses:
\begin{enumerate}
\item
$\mfa = \mfu < \mfi=\mathfrak{c}$,
\item
$\mfa = \mfi < \mfu=\mathfrak{c}$,
\item
$\mfa < \mfu = \mfi=\mathfrak{c}$.
\end{enumerate}
\end{maintheorem}

The study of the three cardinals $\mfi$, $\mfu$ and $\mfa$ involves studying various witnesses wherein the maximality condition is strengthened in such a way that it can be preserved by countable support iterations of $S$-proper posets for a suitably chosen stationary $S\subseteq \omega_1$. It seems that our methods only preserve witnesses of this type that are sufficiently definable, a circumstance which is, to our knowledge, different from that of other preservation theorems of this sort. 

The rest of this paper is organized as follows. In the next section we review the main forcing from \cite{FF10} and modify it slightly in a manner important for our purposes. Sections 3-5 contain our preservation theorems leading into Section 6 where applications, including the proof of Main Theorem \ref{mainthm1}, are given. Section 7 concludes the paper with some remarks, open questions and lines for future research.

\section{The Fischer-Friedman Forcing to Add a $\Delta^1_3$-Well Order}
In this section we record a slight variation on the main forcing from \cite{FF10}. This forcing, which is defined over $L$, adds $\Delta^1_3$-well order of the reals, forces $2^{\aleph_0} = \aleph_2$ and has the Sacks property. Moreover, it is flexible in the sense that other proper forcing notions can be ``woven in" along the iteration resulting in various constellations of cardinal characteristics. See \cite[Section 6]{FF10} for several theorems of this form. Here, for the most part, we go over the main attributes of this forcing for the convenience of the reader. Most proofs are left out and we refer the reader to \cite{FF10} for a more detailed discussion of this poset. However, for our preservation theorems in Sections 3-5 we will need to slightly augment the definition of the Sacks coding iterand (Subsection 2.3 below) and this augmentation will be discussed in more detail.

There are three ingredients that go into the Fischer-Friedman forcing and we take them one at a time. Throughout the rest of the paper, following \cite{FF10} we say that a transitive model $\Me$ is {\em suitable} if $\omega_2^\Me$ exists and $\omega_2^\Me = \omega_2^{L^\Me}$. The forcing notion we define will be a countable support iteration $\P_{\omega_2}$ of forcing notions and each iterand will be defined in some $L[G^*]$ where $G^*$ is generic for some forcing notion $\P^* \in L$ in which cofinalities (and hence cardinals) have not been changed. Consequently we assume when making definitions in this section that $V= L[G^*]$ and the cofinalities of $V$ are the same as those of $L$.

\subsection{Localization}
Let $X \subseteq \omega_1$ and $\varphi(\omega_1, X)$ be a $\Sigma_1$ sentence with parameters $\omega_1$ and $X$ which is true in all suitable models which contain $\omega_1$ and $X$ as elements. The first forcing notion we will need to define $\P_{\omega_2}$ is the following.

\begin{definition}[Localization Forcing]
Let $\mathcal L(\varphi)$ be the partial order consisting of all functions $r: |r| \to 2$ where $|r|$ is a countable limit ordinal and the following hold.
\begin{enumerate}
\item
If $\gamma < | r|$ then $\gamma \in X$ if and only if $r(2\gamma) = 1$.
\item
If $\gamma \leq |r|$ and $\Me$ is a countable, suitable model containing $r \hook \gamma$ as an element and $\gamma = \omega_1^\Me$ then $\Me \models \varphi(\gamma, X \cap \gamma)$. 
\end{enumerate}
The extension relation is end extension.
\end{definition}

Observe that if $r \in \mathcal L(\varphi)$ then the even part of $r$ codes $X \cap |r|$. The following facts from \cite{FF10} are relevant for our discusssion.

\begin{fact}
The following hold.
\begin{enumerate}
\item (\cite[Lemma 1]{FF10})
For all $\gamma < \omega_1$ the set of conditions $r \in \mathcal L(\varphi)$ so that $\gamma \leq |r|$ is dense.
\item(\cite[Lemma 2]{FF10})
Let $G \subseteq \mathcal L(\varphi)$ be generic over $V$ and let $Y = \bigcup G$. If $\Me$ is a suitable model containing $Y \hook \omega_1^\Me$ as an element then $\Me \models \varphi(\omega_1^\Me, X \cap \omega_1^\Me)$.
\item (\cite[Lemmas 3 and 4]{FF10})
$\mathcal L(\varphi)$ has a $\sigma$-closed dense subset and therefore is proper and adds no reals.
\end{enumerate}
\end{fact}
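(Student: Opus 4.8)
The plan is to reduce all three parts to a single construction: a $\sigma$-closed dense subset $D \subseteq \mathcal L(\varphi)$ of \emph{self-coding} conditions, in the spirit of David's coding. Fixing once and for all a recursive pairing of ordinals and a way to read off ``the odd part of a binary string on an interval'', I would put $r \in D$ exactly when the odd coordinates of $r$ carry two kinds of data: at a club $C_r$ of limit ordinals $\le |r|$ (with $|r| \in C_r$) — the \emph{reflection points} of $r$ — the odd part of $r$ below $\eta$ codes a surjection of $\eta$ onto a countable suitable model $\bN_\eta$ with $r\hook\eta, X\cap\eta \in \bN_\eta$, $\omega_1^{\bN_\eta} = \eta$ and $\bN_\eta \models \varphi(\eta, X\cap\eta)$; and at every other limit $\eta \le |r|$ the odd part of $r$ below $\eta$ codes a cofinal $\omega$-sequence in $\eta$. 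The definition is by recursion on $|r|$, imposing coherence ($r\hook\eta \in D$ with $C_{r\hook\eta} = C_r \cap (\eta+1)$ when $\eta \in C_r$); note a $D$-condition cannot code a cofinal $\omega$-sequence in its own length, since $\bN_{|r|} \ni r$ believes $|r| = \omega_1$. All three parts fall out of the density and $\sigma$-closure of $D$.

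For density, given $r_0 \in \mathcal L(\varphi)$ and a target $\gamma < \omega_1$, I would fix large $\theta$ — so $H_\theta$ is suitable (as $\omega_2^V = \omega_2^L$ under the standing assumption of this section), hence $H_\theta \models \varphi(\omega_1, X)$ by the hypothesis on $\varphi$ — pick a countable $N \elementaryIn H_\theta$ with $r_0, X, \gamma, \mathcal L(\varphi) \in N$, and collapse it to a countable suitable $\bN$ with $\omega_1^{\bN} =: \delta^* > \gamma$, $X\cap\delta^*, r_0 \in \bN$ and $\bN \models \varphi(\delta^*, X\cap\delta^*)$. I would then extend $r_0$ to $r'$ of length $\delta^*$, placing the characteristic function of $X$ on the even coordinates and using the odd coordinates on $[|r_0|, \delta^*)$ to code — spread cofinally in $\delta^*$ — a surjection $\delta^* \to \bN$, together with cofinal $\omega$-sequences in the intermediate limits not designated reflection points, closing the reflection points off to a club below $\delta^*$ inside $\bN$. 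Checking $r' \in \mathcal L(\varphi)$: clause (1) is immediate; clause (2) is vacuous at a non-reflection limit $\eta < \delta^*$ because $r'\hook\eta$ decodes a cofinal $\omega$-sequence in $\eta$; and at $\eta = \delta^*$ (or at a reflection point $\eta < \delta^*$, verbatim) a countable suitable $\Me$ with $r'\hook\eta \in \Me$ and $\omega_1^\Me = \eta$ decodes the coded surjection and, being well-founded, realizes $\bN_\eta$ as a transitive element, so $\Me \models \varphi(\eta, X\cap\eta)$ by $\Sigma_1$-upward absoluteness (the parameters $\eta, X\cap\eta$ lying in $\bN_\eta$). Thus $r' \le r_0$ lies in $D$ with $|r'| > \gamma$, which yields both density of $D$ and part (1).

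For $\sigma$-closure, given a descending chain $r_0 \ge r_1 \ge \cdots$ in $D$ with $|r_n| \nearrow \delta_\omega$, I would set $s = \bigcup_n r_n$: by coherence each $|r_n|$ is a reflection point of $r_{n+1}$, so $s$ codes the increasing chain $\langle \bN_{|r_n|} : n < \omega\rangle$ and hence any transitive model containing $s$ decodes the cofinal $\omega$-sequence $\langle |r_n| : n < \omega\rangle$ in $\delta_\omega$; so clause (2) is vacuous at $\delta_\omega$ and inherited below it, giving $s \in \mathcal L(\varphi)$, and by density there is $s' \in D$ with $s' \le s \le r_n$ for all $n$. Hence $D$ is $\sigma$-closed; since $\mathcal L(\varphi)$ is forcing-equivalent to $D$ and $\sigma$-closed forcing adds no new $\omega$-sequences (in particular no reals) and is proper, this is part (3). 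For part (2), with $Y = \bigcup G$ a total function $\omega_1 \to 2$ by part (1), genericity together with the structure of $D$ gives, for each limit $\eta < \omega_1$, the dichotomy that $Y\hook\eta$ either decodes a cofinal $\omega$-sequence in $\eta$ or decodes a surjection onto a countable suitable $\bN_\eta$ with $\omega_1^{\bN_\eta} = \eta$ and $\bN_\eta \models \varphi(\eta, X\cap\eta)$. So if $\Me$ is suitable with $Y\hook\omega_1^\Me \in \Me$ and $\delta := \omega_1^\Me < \omega_1$, the first alternative is impossible ($\Me$ has no cofinal $\omega$-sequence in its own $\omega_1$), $Y\hook\delta$ decodes $\bN_\delta \in \Me$, and $\Me \models \varphi(\delta, X\cap\delta)$ by upward absoluteness; while if $\delta = \omega_1$ then $Y \in \Me$ and $\Me$ is a suitable model containing $\omega_1$ and $X$, so $\Me \models \varphi(\omega_1, X)$ directly by the hypothesis on $\varphi$.

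I expect the crux to be the bookkeeping behind the definition of $D$: making $D$ dense forces one to produce, inside the collapse $\bN$, a genuine \emph{club} of reflection points below $\delta^*$ (a reflection argument carried out within $\bN$), while making $D$ $\sigma$-closed relies on the union of a $D$-chain remaining a condition — which works precisely because its top coordinate visibly has cofinality $\omega$, being the limit of the reflection points it codes. A minor additional point is the case in part (2) of an uncountable $\Me$ with countable $\omega_1^\Me$: the dichotomy argument handles it unchanged, since it nowhere uses countability of $\Me$.
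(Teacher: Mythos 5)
The paper itself offers no proof of this Fact --- it simply cites Lemmas 1--4 of \cite{FF10} --- so your attempt has to be judged on its own terms. The general strategy (code a suitable model witnessing $\varphi(\eta,X\cap\eta)$ into the odd part of a condition, kill the intermediate limits by coding singularizing information, verify clause (2) by $\Sigma_1$ upward absoluteness, and run parts (1)--(3) through a dense set of such self-coding conditions) is indeed the right kind of argument. But your set $D$ is empty as defined: at a reflection point $\eta$ (in particular at $\eta=|r|$) you require the odd part of $r\restriction\eta$ to code a surjection of $\eta$ onto $\bN_\eta$ \emph{and} $r\restriction\eta\in\bN_\eta$. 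Any suitable $\bN_\eta$ containing $r\restriction\eta$ contains the code, decodes the relation, and by Mostowski collapse plus replacement (available in $\ZF^-$) computes its own collapse, giving $\bN_\eta\in\bN_\eta$. Consistently with this, your density construction produces an $r'$ coding the collapse $\bN$ of $N\prec H_\theta$, and $r'\notin\bN$; so $r'$ verifies clause (2) of $\mathcal L(\varphi)$ (that part of your argument, and hence part (1), is fine, modulo the bookkeeping and the fact that below a condition with ``garbage'' initial segment one can only demand self-coding above $|r_0|$), but it does not land in $D$ as you defined it. The membership clause must be dropped (only $X\cap\eta\in\bN_\eta$ is needed for the absoluteness step), and your side remark that a $D$-condition cannot singularize its own length relied on exactly the impossible clause.

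The more serious, load-bearing gap is the $\sigma$-closure step. For a chain $r_0\geq r_1\geq\cdots$ in $D$ with union $s$ and $|s|=\delta_\omega$, you claim clause (2) is vacuous at $\delta_\omega$ because ``any transitive model containing $s$ decodes the cofinal $\omega$-sequence $\langle|r_n|\rangle$''. Even granting (via an unambiguous decoding scheme) that each $|r_n|$ is a reflection point of $r_{n+1}$, a suitable $\Me\ni s$ with $\omega_1^\Me=\delta_\omega$ can only identify the set of \emph{all} limits $\eta<\delta_\omega$ whose odd part codes a $\varphi$-witnessing model; this set contains every club $C_{r_n}$, is cofinal in $\delta_\omega$, and typically has order type $\delta_\omega$, and nothing in your scheme distinguishes the tops $|r_n|$ inside it. So $\Me$ sees no cofinal $\omega$-sequence in (and no collapse of) $\delta_\omega$, vacuity is not established, and there is no other route to clause (2) at $\delta_\omega$: the coded models only witness $\varphi(|r_n|,X\cap|r_n|)$, which, $\varphi$ being $\Sigma_1$ in the parameters $\eta, X\cap\eta$, says nothing about $\varphi(\delta_\omega,X\cap\delta_\omega)$. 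This is precisely the point the Fischer--Friedman argument is built to handle: the dense $\sigma$-closed set must be arranged so that each extension codes, in a block recognizable from the condition itself (e.g.\ by coding a collapse of the previous top onto $\omega$ on an identifiable interval), information making the length of the union of any $\omega$-chain visibly countable or of cofinality $\omega$ to every model containing it. Your club-of-reflection-points design does not do this --- if anything it makes $\delta_\omega$ look like a healthy limit of reflection points --- so as it stands part (3), and with it the route you take to part (2), is not proved. (Part (2) itself can be rescued independently of $D$: given $\delta=\omega_1^\Me<\omega_1$, take $r\in G$ with $|r|\geq\delta$ by part (1), pass to a countable $\Me'\prec\Me$ with $\delta\subseteq\Me'$, collapse, apply clause (2) of the definition of $\mathcal L(\varphi)$, and go back up by $\Sigma_1$ upward absoluteness.)
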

\subsection{Club Shooting}
Given a stationary, co-stationary set $S \subseteq \omega_1$ recall the forcing $Q(S)$ for shooting a club through the complement of $S$. Namely $Q(S)$ consists of countable closed subsets of $\omega_1 \setminus S$ ordered by end-extension. The following is well-known, see \cite{JechST}.

\begin{fact}
For any stationary, co-stationary $S \subseteq \omega_1$ the forcing $Q(S)$ is $\omega$-distributive and $\omega_1 \setminus S$-proper.
\end{fact}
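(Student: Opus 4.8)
The plan is to establish both conclusions by a single elementary-submodel construction; the hypotheses on $S$ (in fact only the co-stationarity of $S$) enter at exactly one point, namely that we must be able to ``cap off'' a descending chain of conditions at an ordinal lying outside $S$. First I would record the structural basics of $Q(S)$: every condition $p$ is a bounded closed subset of $\omega_1 \setminus S$, hence has a largest element $\max p$, and $q \leq p$ holds iff $q \cap (\max p + 1) = p$ (so $p \subseteq q$ and $q \setminus p$ consists of ordinals $> \max p$). I would also prove the auxiliary density statement: for each $\gamma < \omega_1$ the set $E_\gamma := \{ r \in Q(S) : \max r > \gamma \}$ is dense, since given $p$ we may pick $\beta \in (\max(\gamma, \max p), \omega_1) \setminus S$ (using that $\omega_1 \setminus S$ is unbounded) and then $p \cup \{\beta\} \leq p$ lies in $E_\gamma$.

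For the core construction, fix a large regular $\theta$ and a countable $N \prec H_\theta$ with $Q(S), S \in N$ and with $\delta := N \cap \omega_1 \notin S$. Let $p \in N \cap Q(S)$. Enumerate the dense subsets of $Q(S)$ that belong to $N$ as $\langle D_n : n < \omega \rangle$ (possible since $N$ is countable), and recursively choose a $\leq$-descending chain $p = p_0 \geq p_1 \geq \cdots$ with $p_{n+1} \in D_n \cap N$; each step is possible by density of $D_n$ together with elementarity of $N$. The key point is that $\sup_n \max p_n = \delta$: indeed $\max p_n < \delta$ for all $n$ (as $p_n \in N$), and for every $\gamma < \delta$ we have $E_\gamma \in N$, so $E_\gamma$ occurs as some $D_n$, whence $\max p_{n+1} > \gamma$. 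Now put $q := \bigl(\bigcup_n p_n\bigr) \cup \{\delta\}$. One checks that $q \in Q(S)$: the increasing union $\bigcup_n p_n$ is a subset of $\delta$ with supremum $\delta$, it is closed below $\delta$ (any of its limit points $\alpha < \delta$ already lies in $p_n$ for $n$ large enough that $\max p_n > \alpha$, and each $p_n$ is closed), so adjoining $\delta$ yields a closed set; and $q \subseteq \omega_1 \setminus S$ because every $p_n$ avoids $S$ and $\delta \notin S$. Since the chain is end-extending and $\delta > \max p_n$, we get $q \leq p_n$ for all $n$, so $q \leq p$; and $q$ is $(N, Q(S))$-generic, for if $D \in N$ is dense then $D = D_n$ for some $n$ and $q \leq p_{n+1} \in D_n \cap N$, so $D \cap N$ is predense below $q$.

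This immediately gives $(\omega_1 \setminus S)$-properness, with witnessing club $\{ N \in [H_\theta]^{\aleph_0} : Q(S), S \in N \}$: any such $N$ with $N \cap \omega_1 \notin S$ is handled by the construction above. For $\omega$-distributivity, fix $p \in Q(S)$ and dense open $\langle D_n : n < \omega \rangle$ in $V$; since the set of $\delta$ of the form $N \cap \omega_1$ for countable $N \prec H_\theta$ with $\{p, S\} \cup \{ D_n : n < \omega \} \subseteq N$ contains a club, and $\omega_1 \setminus S$ is stationary, we may fix such an $N$ with $N \cap \omega_1 \notin S$ and run the construction; the given $D_n$ are among the dense sets of $N$, so the resulting $q \leq p$ satisfies $q \leq p_{n+1} \in D_n$, and since $D_n$ is open, $q \in D_n$ for every $n$. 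Hence $\bigcap_n D_n$ is dense below $p$, which is what $\omega$-distributivity requires (and so $Q(S)$ adds no reals).

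The main --- indeed the only non-routine --- obstacle is checking that the object $q$ built at each step is genuinely a condition, i.e.\ that $\bigl(\bigcup_n p_n\bigr) \cup \{\delta\}$ is a \emph{closed} subset of $\omega_1 \setminus S$. This is precisely where the two uses of the hypothesis on $S$ come in: unboundedness of $\omega_1 \setminus S$ (hidden in the density of $E_\gamma$, which forces $\sup_n \max p_n$ up to $\delta$ so that the union closes off at its own supremum) and the existence of an elementary submodel $N$ with $N \cap \omega_1 \notin S$ (so that $\delta$ may be adjoined). Everything else is the standard template of meeting all relevant dense sets along a descending chain inside a countable elementary submodel.
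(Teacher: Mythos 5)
Your argument is correct and complete: it is the standard elementary-submodel proof (end-extend through all dense sets of $N$, using density of the $E_\gamma$ to push the suprema up to $\delta = N \cap \omega_1 \notin S$, then cap the chain with $\delta$), which is exactly the classical argument the paper has in mind when it states this fact without proof and simply cites Jech as well-known. The only cosmetic quibbles are the empty condition (which has no $\max$, so the end-extension description should be phrased to allow $p=\emptyset$) and that $S$ is definable from $Q(S)$, so requiring $S\in N$ separately is harmless but unnecessary; neither affects the proof.
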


Here, a forcing notion $\Q$ is $T$-{\em proper} for some stationary $T \subseteq\omega_1$ if for any sufficiently large $\theta$ and countable elementary submodel $\Me \prec H_\theta$  such that $\Me \cap \omega_1 \in T$, every $p \in \Q$ extends to a $q \leq p$ which is $(\Me, \Q)$-generic. It is well known that most iteration and preservation theorems for countable support iterations of proper notions apply \emph{mutatis mutandis} to countable support iterations of $T$-proper forcing notions.

We will need a special sequence of stationary sets $S$ for which the forcings of the form $Q(S)$ will appears as iterands in $\P_{\omega_2}$. We describe these now.

\begin{lemma}[See Lemma 14 of \cite{FF10}]
Assume $V= L$. There is a function $F:\omega_2 \to L_{\omega_2}$ which is $\Sigma_1$ definable over $L_{\omega_2}$ and a sequence $\vec{S} = \langle S_\beta \; | \; \beta < \omega_2\rangle$ of almost disjoint stationary subsets of $\omega_1$ which is $\Sigma_1$-definable over $L_{\omega_2}$ with parameter $\omega_1$ so that $F^{-1}(a)$ is unbounded in $\omega_2$ for every $a \in L_{\omega_2}$ and whenever $\Me$ and $\Null$ are suitable models so that $\omega_1^\Me = \omega_1^\Null$ then $F^\Me$ and $\vec{S}^\Me$ agree with $F^\Null$ and $\vec{S}^\Null$ on $\omega_2^\Null \cap \omega_2^\Me$. In addition, if $\Me$ is suitable and $\omega_1^\Me = \omega_1$ then $F^\Me$ and $\vec{S}^\Me$ equal the restrictions of $F$ and $\vec{S}$ to the $\omega_2$ of $\Me$.
\label{FandS}
\end{lemma}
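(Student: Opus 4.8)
This records \cite[Lemma 14]{FF10}; here is how its proof goes. Work in $L$ and use throughout the canonical $\Sigma_1$-definable global well-order $<_L$, the absoluteness of the constructible hierarchy (both $\alpha\mapsto L_\alpha$ and, for set-sized $L_\eta$, the relation ``$L_\eta\models\sigma$'', are absolute between transitive models of a mild fragment of $\ZF$ containing the relevant ordinals), condensation, and \GCH in $L$. The key feature to engineer into the definitions of $F$ and $\vec{S}$ is \emph{locality}: each $F(\gamma)$, respectively each $S_\beta$, is declared by a $\Sigma_1$ formula to be the output of a fixed $\Delta_1$ recursion run \emph{inside a bounded level} $L_{\gamma^{+}}$, respectively $L_{\beta^{+}}$, where $\gamma\mapsto\gamma^{+}$ and $\beta\mapsto\beta^{+}$ are $\Sigma_1$-definable ordinal operations that, as one verifies, stay below $\omega_2$ whenever interpreted inside a suitable model. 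Locality is exactly what makes the coherence clauses go through, since a bounded absolute recursion returns the same value in any two transitive models containing the level in which it is run.

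For $F$, fix the G\"odel pairing $\Gamma\colon\On\times\On\to\On$, a $\Delta_1$-definable bijection with $\Gamma[\kappa\times\kappa]=\kappa$ for every infinite cardinal $\kappa$ and $\Gamma(\xi,\eta)\geq\xi$. Define a $\Sigma_1$-over-$L_{\omega_2}$ parameter-free partial surjection $F'\colon\omega_2\to L_{\omega_2}$ by letting $F'(\gamma)$, for $\Gamma^{-1}(\gamma)=(\alpha,\delta)$, be the element of $L_{\alpha+\omega}$ of $<_L$-rank $\delta$ (and $\emptyset$ if there is none); this is onto since every $a\in L_{\omega_2}$ lies in some $L_{\alpha+\omega}$. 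Put $F(\gamma):=F'((\Gamma^{-1}(\gamma))_1)$. Then, fixing $\delta$ with $F'(\delta)=a$, we get $F(\Gamma(\epsilon,\delta))=a$ for all $\epsilon<\omega_2$ while $\{\Gamma(\epsilon,\delta):\epsilon<\omega_2\}$ is cofinal in $\omega_2$, so $F^{-1}(a)$ is unbounded; and $F$ is local (computed inside $L_{\alpha+\omega}$ with $\alpha\leq\gamma$) and parameter-free. For $\vec{S}$, fix by fine structure a $\diamondsuit$-sequence $\vec{e}=\langle e_\alpha : \alpha<\omega_1\rangle$ that is $\Sigma_1$-definable over $L_{\omega_2}$ with parameter $\omega_1$ and local, in the sense that $\vec{e}\restriction\alpha$ is produced by a fixed recursion inside a bounded level $L_{\rho(\alpha)}$ with $\rho$ an absolute function of $\alpha$ below $\omega_1$. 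For $\beta<\omega_2$ let $\beta^{+}$ be least with $\beta<\beta^{+}$ and $L_{\beta^{+}}$ containing a surjection $\omega_1\to\beta$ (so $\beta^{+}<\omega_2$ by \GCH); let $g_\beta$ be the $<_L$-least such surjection, and let $c_\beta:=\{\Gamma(\xi,\zeta):g_\beta(\xi)\leq g_\beta(\zeta)\}\subseteq\omega_1$, a code for the prewellorder of $\omega_1$ of ordertype $\beta$. Then $\beta\mapsto c_\beta$ is injective and $\Sigma_1$-definable with parameter $\omega_1$. Finally set
\[
S_\beta:=\{\alpha<\omega_1:\alpha\text{ is a limit ordinal and }e_\alpha=c_\beta\cap\alpha\}.
\]
By $\diamondsuit$ applied to $c_\beta$, each $S_\beta$ is stationary; and if $\alpha\in S_\beta\cap S_{\beta'}$ with $\beta\neq\beta'$, then $c_\beta\cap\alpha=e_\alpha=c_{\beta'}\cap\alpha$, so $\alpha$ is below the least element of $c_\beta\triangle c_{\beta'}$, whence $S_\beta\cap S_{\beta'}$ is bounded in $\omega_1$ and so nonstationary. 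Thus $\vec{S}$ is a $\Sigma_1$-definable (parameter $\omega_1$) sequence of pairwise almost disjoint stationary subsets of $\omega_1$.

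For coherence: since $V=L$, a suitable $\Me$ is transitive and models $V=L$ (a transitive set for which $\omega_2$ makes sense and which is contained in $L$ must), hence $\Me=L_{\On^{\Me}}$, so $\omega_1^{\Me}=\omega_1^{L_{\On^{\Me}}}$, $\omega_2^{\Me}=\omega_2^{L_{\On^{\Me}}}$, and $H_{\omega_2}^{\Me}=L_{\omega_2^{\Me}}$ by \GCH. Write $F^{\Me}$ and $\vec{S}^{\Me}$ for the objects the above formulas define over $L_{\omega_2^{\Me}}$ with parameter $\omega_1^{\Me}$. Given suitable $\Me,\Null$ with $\omega_1^{\Me}=\omega_1^{\Null}$, say $\omega_2^{\Me}\leq\omega_2^{\Null}$: for $\gamma<\omega_2^{\Me}$ the level $L_{\gamma+\omega}$ computing $F(\gamma)$ is below $\omega_2^{\Me}$; for $\beta<\omega_2^{\Me}$ the level $L_{\beta^{+}}$ computing $c_\beta$ is below $\omega_2^{\Me}$, since by \GCH a surjection $\omega_1^{\Me}\to\beta$ already occurs in $H_{\omega_2}^{\Me}=L_{\omega_2^{\Me}}$; and $\vec{e}\restriction\omega_1^{\Me}$ is computed from the shared parameter $\omega_1^{\Me}$ inside levels below $\omega_1^{\Me}$. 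As these are bounded absolute computations, they return identical values in $\Me$, in $\Null$, and in $L$, so $F^{\Me}=F^{\Null}\restriction\omega_2^{\Me}$ and $\vec{S}^{\Me}=\vec{S}^{\Null}\restriction\omega_2^{\Me}$; and taking $\Null=V$ (which is itself suitable) gives the final clause, that $F^{\Me}=F\restriction\omega_2^{\Me}$ and $\vec{S}^{\Me}=\vec{S}\restriction\omega_2^{\Me}$ when $\omega_1^{\Me}=\omega_1$.

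The real difficulty lies in the locality demands made in the $\vec{S}$-step: one must actually build, by a careful fine-structural recursion, a $\diamondsuit$-sequence $\vec{e}$ (and the operation $\beta\mapsto\beta^{+}$) whose stage-$\alpha$ computation inside \emph{every} suitable $L_\delta$ never climbs as high as $\omega_2^{L_\delta}$, so that the usual condensation argument showing $\vec{e}$ guesses still goes through while at the same time two suitable models with a common $\omega_1$ literally run the same recursion on the overlap of their $\omega_2$'s. This absoluteness bookkeeping, rather than the combinatorics of the stationary sets or the construction of $F$, is the crux, and is exactly what \cite[Lemma 14]{FF10} supplies; I would import it essentially verbatim.
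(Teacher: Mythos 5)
The paper does not prove this lemma at all: it is quoted verbatim from \cite{FF10} (Lemma 14 there) and simply used, so there is no internal proof to compare against; your proposal in effect reconstructs the standard Fischer--Friedman construction (a canonical $\Sigma_1$ surjection with unbounded fibers for $F$, and stationary sets obtained by letting a canonical, locally computed $\diamondsuit$-sequence guess canonical codes $c_\beta\subseteq\omega_1$ of the ordinals $\beta<\omega_2$, with almost disjointness coming from injectivity of $\beta\mapsto c_\beta$), and in outline this is correct and is essentially the argument of the cited source. One justification you give is wrong as stated, though it is not load-bearing: even under $V=L$, a suitable model $\Me$ need not satisfy $V=L$ (a transitive $\Me$ can contain constructible sets that are not constructible \emph{from $\Me$'s point of view}), so you cannot conclude $\Me=L_{\mathrm{On}^\Me}$. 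What you actually need, and what suitability does give, is this: since $\omega_2^\Me=\omega_2^{L^\Me}$ and cardinals of $\Me$ remain cardinals of $L^\Me$, one gets $\omega_1^\Me=\omega_1^{L^\Me}$ (otherwise $\omega_1^\Me$ would be an $L^\Me$-cardinal strictly between $\omega_1^{L^\Me}$ and its $L^\Me$-successor), and the interpretations $F^\Me$, $\vec S^\Me$ are by convention computed over $(L_{\omega_2})^{L^\Me}=L_{\omega_2^\Me}$, which is a genuine initial segment of $L$ by absoluteness of the constructible hierarchy. With that substitution your coherence argument goes through exactly as you wrote it, since the witnesses for each value of $F(\gamma)$, $c_\beta$, and the $\diamondsuit$-recursion all lie in $H_{\omega_2}^{L^\Me}=L_{\omega_2^\Me}$ by \GCH and condensation inside $L^\Me$, and the $<_L$-least witnesses computed in the smaller and larger models coincide because the larger model only adds $<_L$-later elements. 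Your closing remark overstates the remaining difficulty: the canonical $\diamondsuit$-sequence of $L$ needs no fine structure, and the locality you defer to \cite{FF10} is exactly the observation you already made, that all candidate pairs in the recursion are subsets of countable ordinals and hence appear below the (common) $\omega_1$ of the models being compared.
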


From now on, fix an $F$ and $\vec{S}$ as described in Lemma \ref{FandS}.

\subsection{Coding with Perfect Trees}
The third component of  $\P_{\omega_2}$ is the only one which adds reals. It is a variation of Sacks forcing called the {\em coding with perfect trees} forcing, denoted by $C(Y)$. To define it, we assume that $V = L[Y]$ where $Y \subseteq \omega_1$ is generic for some forcing in $L$ which does not change cofinalities. Fix a sequence $\vec{\mu}=\langle\mu_i \; | \; i < \omega_1\rangle$ defined inductively so that $\mu_i > \bigcup_{j < i} \mu_j$ is for each $i < \omega_1$ a countable ordinal which is least with the property that $L_{\mu_i}[Y\cap i]$ is $\Sigma^1_5$ elementary in $L_{\omega_1}[Y \cap i]$ and is a model of $\ZF^- + $``$\omega$ is the largest cardinal".  Note that this is slightly different than the definition of $\vec{\mu}$ given in \cite{FF10}, which requires no elementarity whatsoever. However, requiring a degree of elementarity at any fixed level of the projective hierarchy does not change the fact that the sequence $\vec{\mu}$ is definable in $L_{\omega_1}[Y]$, and this is all that matters for their arguments.

\begin{remark}
The use of $\Sigma^1_5$ as opposed to say, $\Sigma^1_6$, $\Sigma^1_{25}$ or $\Sigma^1_{35469}$ is inconsequential. All that matters is that we fix a finite level of the projective hierarchy ahead of time and restrict our arguments to statements at that level or below. The witnesses we will be considering in this paper are mostly $\Pi^1_1$ (and occasionally $\Sigma^1_2$) and it takes two to three more quantifiers to write down the relevant properties of them, hence the choice of 5. However, if we were interested in preserving, e.g., $\Pi^1_5$ witnesses then $\Sigma^1_9$ ($9 = 5 + 4$) would suffice. Since the exact level does not matter, in the interests of this paper's readability we won't count quantifiers too closely, regarding this sort of analysis as essentially routine. Rather we will refer, somewhat excessively, to ``elementarity", simply, and leave the details to the particularly exacting reader.
\end{remark} 

The conditions of $C(Y)$ will be perfect trees $T \subseteq 2^{<\omega}$. For $T$ a perfect tree let $|T|$ be the least $i$ with $T \in L_{\mu_i}[Y \cap i]$. Let us also denote $\mathscr A_i := L_{\mu_i}[Y \cap i]$ for each $i$. A real $r$ {\em codes $Y$ below $i$} if and only if $j \in Y$ if and only if $\mathscr A_j[r] \models \ZF^-$, for all $j < i$. 

\begin{definition}
The forcing notion $C(Y)$ is the set of all perfect trees $T$ so that each branch of $T$ codes $Y$ below $|T|$. The order is inclusion.
\end{definition}

We need some standard notation concerning trees. Recall that if $T \subseteq 2^{<\omega}$ (or $\omega^{<\omega}$) is a tree then a  node $t \in T$ is called {\em splitting} if it has more than one immediate sucessor. Denote by ${\rm Split}(T)$ the set of splitting nodes in $T$. For $n < \omega$ we say that $t \in T$ is $n$-{\em splitting} if it is splitting and has exactly $n-1$ predecessors which are splitting. We denote by ${\rm Split}_n(T)$ the set of $n$-splitting nodes. Note ${\rm Split}(T) = \bigcup_{n < \omega} {\rm Split}_n(T)$. Given conditions $p, q \in C(Y)$ we say that $p \leq_n q$ if $p \leq q$ and ${\rm Split}_n(p) = {\rm Split}_n(q)$. 

\begin{fact}
The following hold.
\begin{enumerate}
\item
(\cite[Lemma 5]{FF10}) If $T \in C(Y)$ and $|T| \leq i < \omega_1$ then there is a $T' \leq T$ so that $|T'| = i$.
\item (\cite[Lemma 7]{FF10}) $C(Y)$ is proper. 
\item (\cite[Lemma 6]{FF10}) If $G \subseteq C(Y)$ is generic over $V$ and $R = \bigcap G$ is a real and $R$ codes $Y$. In other words for all $j < \omega_1$ we have $j \in Y$ if and only if $L_{\mu_j}[Y \cap j, R] \models \ZF^-$.
\end{enumerate}
\end{fact}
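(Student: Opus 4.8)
The plan is to prove all three clauses by perfect-tree fusion arguments, treating $(1)$ first since it feeds into both $(2)$ and $(3)$. For $(1)$, fix $T\in C(Y)$ with $|T|\le i<\omega_1$ and work inside $\mathscr A_i=L_{\mu_i}[Y\cap i]$. Since $\mathscr A_i\models\ZF^-+{}$``$\omega$ is the largest cardinal'', the ordinal $i$ is countable in $\mathscr A_i$, so fix there an $\omega$-enumeration $\langle k_n\mid n<\omega\rangle$ of $[|T|,i)$ together with a real $d\in\mathscr A_i\setminus\bigcup_{j<i}\mathscr A_j$. Then build a fusion $T=T_0\ge_0 T_1\ge_1 T_2\ge_2\cdots$ inside $\mathscr A_i$ so that, in passing from $T_n$ to $T_{n+1}\le_n T_n$, every branch of $T_{n+1}$ is forced to ``read $Y$ correctly at $k_n$'': if $k_n\in Y$, refine so that every branch $r$ is sufficiently generic over $\mathscr A_{k_n}$ that $\mathscr A_{k_n}[r]\models\ZF^-$; if $k_n\notin Y$, refine so that every branch $r$ computes a fixed real coding a cofinal map $\omega\to\mathrm{Ord}^{\mathscr A_{k_n}}$, whence $\mathscr A_{k_n}[r]\not\models\ZF^-$ (its range cannot be a set); and at one further step code $d$ into every branch. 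The limit $T'=\bigcap_n T_n$ is a perfect tree in $\mathscr A_i$ with $T'\le T$; every branch of $T'$ codes $Y$ below $i$ (it already did so below $|T|$, and the new constraints cover $[|T|,i)$); and $|T'|=i$, being $\ge i$ because $T'$ computes $d$ and $\le i$ because $T'\in\mathscr A_i$. So $T'\in C(Y)$ is as required.

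For $(2)$, fix a large enough $\theta$, a countable $\Me\prec H_\theta$ with $C(Y),Y,\vec\mu\in\Me$, and $p\in C(Y)\cap\Me$; let $\langle D_n\mid n<\omega\rangle$ enumerate the dense subsets of $C(Y)$ lying in $\Me$ and let $\langle i_n\mid n<\omega\rangle$ be increasing and cofinal in $\Me\cap\omega_1$. I would build a fusion $p=p_0\ge_0 p_1\ge_1\cdots$ with every $p_n\in\Me$ so that (i) for each $n$-splitting node $t$ of $p_n$, $(p_{n+1})_t$ lies below some member of $D_n$ (treating the finitely many such $t$ one at a time, using density and the fact that restricting a condition of $C(Y)$ to a node of it is again a condition since this only lowers the level), and (ii) $|p_{n+1}|\ge i_n$ and every branch of $p_{n+1}$ codes $Y$ below $i_n$, which is arranged by invoking $(1)$ inside $\Me$. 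The fusion $q=\bigcap_n p_n$ is perfect, $q\le p$, and $(\Me,C(Y))$-generic: for $D=D_n\in\Me$, any $q'\le q$ passes through some $n$-splitting node $t$ of $q$, which (since $q\le_n p_n$) is also $n$-splitting in $p_n$, so $(q')_t\le(p_{n+1})_t$ is below a member of $D\cap\Me$. Moreover each branch of $q$ codes $Y$ below $\sup_n i_n=\Me\cap\omega_1$, and since $q\le p_n\in\Me$ for all $n$ one checks $|q|\le\Me\cap\omega_1$, so $q\in C(Y)$. Hence every $p\in C(Y)\cap\Me$ extends to an $(\Me,C(Y))$-generic condition, i.e. $C(Y)$ is proper.

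For $(3)$, first note that by $(1)$ the set $\{T\in C(Y)\mid|T|\ge i\}$ is dense for every $i<\omega_1$, and a standard density argument (again using that passing to a successor of a splitting node keeps one in $C(Y)$) shows that $R:=\bigcap G$ is a single real. Given $j<\omega_1$, choose $T\in G$ with $|T|>j$; since $R$ is a branch of $T$ and $T\in C(Y)$, $R$ codes $Y$ below $|T|$, so $j\in Y$ if and only if $\mathscr A_j[R]\models\ZF^-$; as $j$ was arbitrary, $R$ codes $Y$. The main obstacle throughout is keeping fusion limits inside $C(Y)$: a priori the branches of $\bigcap_n T_n$ code $Y$ only below $\sup_n|T_n|$, whereas $|\bigcap_n T_n|$ could overshoot that supremum, and at each relevant coordinate $k$ one needs a whole perfect set of branches that read $Y$ correctly at $k$ rather than just one. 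The remedy — performing the fusion of $(1)$ inside the small model $\mathscr A_i$ so its limit again lies in $\mathscr A_i$, and synchronising the coding levels with $\Me\cap\omega_1$ in $(2)$ — is where the definability of $\vec\mu$ over $L_{\omega_1}[Y]$ is used and where the bulk of the technical work sits.
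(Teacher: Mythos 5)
The paper itself does not prove this Fact --- it cites \cite[Lemmas 5--7]{FF10} --- so the comparison is with that argument and with the paper's closest in-house analogue, the fusion in the proof of Lemma \ref{preprocessed}. Your plan for $(1)$ has a genuine structural flaw at exactly the point the lemma turns on. You process $[|T|,i)$ along an arbitrary $\omega$-enumeration $\langle k_n\rangle$, and at a level $k_n\notin Y$ you refine so that \emph{every branch computes} a fixed real coding a cofinal map $\omega\to\mathrm{Ord}^{\mathscr A_{k_n}}$ (and later you ``code $d$ into every branch''). Read literally (parameter-free computation from the branch), this step is in general impossible to carry out: if $j\in Y\cap|T|$, then every branch $r$ of $T$ already satisfies $\mathscr A_j[r]\models\ZF^-$, and a $\ZF^-$-model of ordinal height $\mu_j$ cannot contain a real coding a wellorder of $\omega$ of order type $\geq\mu_j$; but any real coding a cofinal $\omega$-sequence in $\mu_{k_n}>\mu_j$ (and likewise your $d$, which codes ordinals up to $\sup_{j'<i}\mu_{j'}\geq\mu_j$) yields such codes. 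Hence no branch of $T$ can compute these reals, and no refinement makes every branch do so. The actual mechanism of \cite[Lemma 5]{FF10} avoids this by making the level-$k$ coding decodable only \emph{with the current tree as a parameter}, and that parameter is available in $\mathscr A_k$ precisely because the recursion runs through $[|T|,i)$ in increasing order (one level at a time, with $\leq_n$-fusions at $\omega$-limits performed inside the target $\mathscr A$); similarly, the step at $k\in Y$ asks for all branches of a perfect subtree to be generic over $\mathscr A_k$ \emph{below the current tree}, which only makes sense (and is only attainable) because the current tree is an element of $\mathscr A_k$ at that moment. Your out-of-order scheme destroys both features, and you give no substitute argument that the required refinements exist.

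In $(2)$ the sentence ``since $q\le p_n\in\Me$ for all $n$ one checks $|q|\le\Me\cap\omega_1$'' asserts precisely the hard point without proof: a fusion computed in $V$ from an enumeration of $\Me$'s dense sets need not be an element of $\mathscr A_\delta$, and membership in $\mathscr A_\delta$ is what $|q|\le\delta$ means; ``synchronising the coding levels with $\Me\cap\omega_1$'' does not supply it. The standard fix, visible in the paper's proof of Lemma \ref{preprocessed}, is to pass to the transitive collapse $\bar{\Me}$, note $\bar{\Me}\in\mathscr A_\delta$, fix a cofinal sequence in $\delta$ that is an \emph{element} of $\mathscr A_\delta$, and carry out the entire fusion inside $\mathscr A_\delta$, so that its limit lies in $\mathscr A_\delta$; genericity then transfers back to $\Me$ by elementarity. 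Your part $(3)$ is fine modulo $(1)$ and a routine density/absoluteness argument, but as it stands the proofs of $(1)$ and $(2)$ have genuine gaps at their respective cores.
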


Moving forward the following strategy will be used frequently to find new conditions. We will begin with some condition $q$ so that $|q| = \delta$ for some $\delta < \omega_1$. Note that $q \in \calA_\delta$ by definition. By the definition of the forcing $C(Y)$, every branch through $q$ will code $Y$ up to $\delta$. If $q ' \subseteq q$ is a perfect subtree of $2^{<\omega}$ which is in $\calA_\delta$ then $q' \in C(Y)$. This is because, since $q' \in \calA_\delta$, $|q'| \leq \delta$ and, since every branch of $q$ and hence of $q'$ code $Y$ up to $\delta$ we must have that $q'$ is a condition. This type of argument is essential in many of our results.

In order to facilitate our discussion later we prove a fact about $C(Y)$ now that will be useful in Sections 3-5. If $\dot{X}$ is a $C(Y)$-name, $p \in C(Y)$ and $p \forces \dot{X} \subseteq \omega$ then the {\em outer hull} of $\dot{X}$ with respect to $p$, denoted $X_p$, is the set $\{m \; | \; p \nVdash \check{m} \notin \dot{X}\}$. If $t \in {\rm Split}(p)$ then we let $X^p_t$ denote the outer hull of $\dot{X}$ with respect to $p_t$. A condition $p$ is {\em preprocessed for} $\dot{X}$ if for each $n < \omega$ and each $n$ split node $t$ we have that $p_t$ decides $\dot{X} \cap \check{n}$.

\begin{lemma}
For every $\dot{X}$ the set of conditions preprocessed for $\dot{X}$ is dense in $C(Y)$. In fact, for every $k < \omega_1$ and every $p \in C(Y)$ there is a $j > k$ and a preprocessed $q \leq p$ so that $q \in \mathscr A_j$ and for all $x \in [q]$ $x$ codes $Y$ below $j$. Moreover $\mathscr A_j$ contains the function $i:{\rm Split}(q) \to [\omega]^{<\omega}$ defined by letting $i(t)$ be, for each $n < \omega$ and $t \in {\rm Split}_n(q)$, the finite set $a_t$ such that $q_t$ forces $\dot{X} \cap \check{n} = \check{a}_t$.
\label{preprocessed}
\end{lemma}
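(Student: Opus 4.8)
The plan is to build $q$ by a fusion argument, interleaving two tasks: (i) shrinking $p$ so that its $n$-splitting nodes decide longer and longer initial segments of $\dot X$, and (ii) pushing the code level up past $i$ while staying inside an $\mathscr A_j$ that also sees the bookkeeping function. First I would recall the strategy highlighted in the paragraph preceding the lemma: if at some stage we have a condition $r$ with $|r|=\delta$ and we pass to a perfect subtree $r'\subseteq r$ lying in $\mathscr A_\delta=L_{\mu_\delta}[Y\cap\delta]$, then automatically $r'\in C(Y)$, since every branch of $r$ (hence of $r'$) codes $Y$ below $\delta$ and $|r'|\le\delta$. So all the shrinking I do will be carried out inside a fixed $\mathscr A_\delta$, and the resulting tree will be a legitimate condition for free.

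The main step is the fusion. Start with $p_0\le p$ with $|p_0|=\delta_0$ for some $\delta_0>i$ (using Fact 2.12(1), i.e.\ \cite[Lemma 5]{FF10}). Inside $\mathscr A_{\delta_0}$, which satisfies enough of $\ZF^-$ and, crucially, enough $\Sigma^1_5$-elementarity to correctly evaluate forcing statements of $C(Y)$ at the relevant complexity (this is exactly the point of the augmented definition of $\vec\mu$), enumerate the finitely many $0$-splitting nodes (there is just the stem split, or finitely many $n$-split nodes at each level) and recursively, given $p_n\in C(Y)$ with $|p_n|=\delta_n$, work in $\mathscr A_{\delta_n}$: for each $t\in {\rm Split}_n(p_n)$ successively extend the subtree above $t$ to decide $\dot X\cap\check n$ — this is possible because $\mathscr A_{\delta_n}$ correctly computes the dense sets of $C(Y)$ deciding $\dot X\cap\check n$ below $(p_n)_t$ — while keeping ${\rm Split}_k(\cdot)$ fixed for $k\le n$, i.e.\ forming $p_{n+1}\le_n p_n$. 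All of this happens inside $\mathscr A_{\delta_n}$, so $p_{n+1}\in\mathscr A_{\delta_n}\subseteq\mathscr A_{\delta_{n+1}}$ once we then pick, again by Fact 2.12(1), some $\delta_{n+1}>\delta_n$ with $|p_{n+1}|=\delta_{n+1}$. Let $q=\bigcap_n p_n$ be the fusion. By the usual Sacks fusion argument $q$ is a perfect tree and $q\le_n p_n$ for all $n$, so ${\rm Split}_n(q)={\rm Split}_n(p_n)$ and each $q_t$ (for $t\in{\rm Split}_n(q)$) decides $\dot X\cap\check n$; that is, $q$ is preprocessed for $\dot X$.

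It remains to locate the right $j$ and check membership and the bookkeeping clause. Set $j=\sup_n\delta_n$ (a countable limit ordinal $>i$), so that $q=\bigcap_n p_n$ with $p_n\in\mathscr A_{\delta_n}$ is definable over $L_{\omega_1}[Y\cap j]$ from the sequence $\langle p_n\mid n<\omega\rangle$; by the $\Sigma^1_5$-elementarity of $\mathscr A_j=L_{\mu_j}[Y\cap j]$ in $L_{\omega_1}[Y\cap j]$ and closure of $\mathscr A_j$ under the relevant operations, $q\in\mathscr A_j$, whence $|q|\le j$ and, since every branch of $p_0$ — hence of $q$ — codes $Y$ below $\delta_0$ and in fact (by construction, each $p_n$ being a condition with larger and larger $|p_n|$) below $j$, we get that every $x\in[q]$ codes $Y$ below $j$. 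Finally the function $i\colon{\rm Split}(q)\to[\omega]^{<\omega}$ sending $t\in{\rm Split}_n(q)$ to the unique $a_t$ with $q_t\forces \dot X\cap\check n=\check a_t$ is definable over $\mathscr A_j$ (the relation ``$q_t\forces\dot X\cap\check n=\check a_t$'' is evaluated correctly there by the elementarity, as it is of bounded projective complexity), so $i\in\mathscr A_j$ as required. The main obstacle, and the only genuinely delicate point, is ensuring that at each stage $\mathscr A_{\delta_n}$ (and at the end $\mathscr A_j$) really does correctly decide the $C(Y)$-forcing facts we invoke; this is precisely what the strengthened definition of $\vec\mu$ with $\Sigma^1_5$-elementarity buys us, and once that is in hand the rest is a routine fusion together with the standard ``subtree in $\mathscr A_\delta$ is automatically a condition'' observation.
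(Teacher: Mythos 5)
There is a genuine gap, and it lies exactly at the point you yourself flag as the delicate one. Your construction makes the forcing decisions inside the models $\mathscr A_{\delta_n}=L_{\mu_{\delta_n}}[Y\cap\delta_n]$ and justifies this by their $\Sigma^1_5$-elementarity in $L_{\omega_1}[Y\cap\delta_n]$. But that elementarity concerns projective statements about reals with parameters in $\mathscr A_{\delta_n}$; it says nothing about the forcing relation of $C(Y)$ for an arbitrary name $\dot X$. Such a name is in general an uncountable object (even a nice name is built from antichains of $C(Y)$, which need not be countable), so $\dot X$ is not an element of, nor coded by a real in, any $\mathscr A_j$, and the statement ``$(p_n)_t$ decides $\dot X\cap\check n$'' quantifies over all of $C(Y)$, most of which also lies outside $\mathscr A_{\delta_n}$. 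Hence the step ``extend above each $n$-splitting node inside $\mathscr A_{\delta_n}$ to decide $\dot X\cap\check n$'' is not available: the model cannot even formulate these statements, let alone evaluate them correctly. The same problem undermines the final claims $q\in\mathscr A_j$ and $i\in\mathscr A_j$: since the choices at each stage are actually made in $V$ using the true forcing relation, there is no reason the sequence $\langle p_n\mid n<\omega\rangle$ is an element of $\mathscr A_j$, and ``$q$ is definable from that sequence'' only helps if the sequence itself lies in the model.

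The paper closes precisely this gap by a different device: fix a countable $M\prec L_{\omega_2}[Y]$ with $p,\dot X\in M$, let $\bar M$ be its transitive collapse and $\delta=\omega_1^{\bar M}$. By elementarity, forcing facts about (the image of) $\dot X$ are computed correctly inside $\bar M$, and by the key fact quoted from \cite{FF10}, $\bar M\in\mathscr A_\delta$. Since $\mathscr A_\delta$ sees that $\delta$ is countable, it contains a cofinal sequence $\vec\delta\subseteq\bar M$ in $\delta$, and the fusion $p=p_0\geq_1 p_1\geq_2\cdots$ is then carried out definably over $\mathscr A_\delta$, with the choices taken inside $\bar M$ and the lengths pushed past the $\delta_n$; this is what guarantees that the fusion $p_\omega$, and the function recording which finite set each $(p_\omega)_t$ decides for $\dot X\cap\check n$, are elements of $\mathscr A_\delta$, which in turn is what makes $p_\omega$ a condition. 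Your fusion skeleton (interleaving $\leq_n$-extensions deciding $\dot X\cap\check n$ with increasing $|p_n|$ along a cofinal sequence, then invoking the ``perfect subtree in $\mathscr A_\delta$ is automatically a condition'' observation) matches the paper's, but without the elementary submodel and its collapse you have no model in which the decisions about $\dot X$ are meaningful and no mechanism forcing the construction, and hence $q$ and the bookkeeping function, into a single $\mathscr A_j$.
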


\begin{proof}
Fix $p$ and $\dot{X}$. Let $M \prec L_{\omega_2}[Y]$ be countable containing $p, \dot{X}$ and let $\bar{M}$ be its transitive collapse. As explained in \cite{FF10} if $\delta = (\omega_1)^{\bar{M}}$ then $\bar{M} \in \mathscr A_\delta$ (and hence $p, \dot{X} \in \calA_\delta$). Since $\calA_\delta$ thinks that $\delta$ is countable, but $\bar{M}$ does not, there is a countable, strictly increasing, cofinal sequence of ordinals $\vec{\delta} := \langle \delta_n \; | \; n < \omega\rangle \in \calA_\delta$ whose supremum is $\delta$ and $\vec{\delta} \subseteq\bar{M}$. Now in $\bar{M}$ it's clear that we can find, for each $n < \omega$ and each $r \in C(Y)$ a condition $r' \leq_n r$ so that $|r'| \geq \delta_n$ and for each $n$ split node $t$ of $r'$ we have that $(r')_t$ decides $\dot{X} \cap \check{n}$. Applying this observation iteratively from the point of view of $\calA_\delta$ yields a a fusion sequence $p = p_0 \geq _1 p_1 \geq_2 p_2 \geq_3 p_3 ...$ so that, letting $p_\omega = \bigcap_{n < \omega} p_n \in C(Y) \cap \calA_\delta$, we have $|p_\omega| \leq \delta$, each branch of $p_\omega$ codes $Y$ below $\delta$ and for each $n<\omega$ and each $n$-splitting node $t$ of $p_\omega$, $(p_\omega)_t$ decides $\dot{X} \cap \check{n}$. Moreover, since $\bar{M}$ knows which finite set each $(p_\omega)_t$ decides for $\dot{X} \cap \check{n}$,  $\mathscr A_j$ can record this data during the construction of the fusion sequence and hence define $i$. 
\end{proof}

\subsection{Putting it All Together}
We're now ready to define $\P_{\omega_2}$. Recall we fixed $F$ and $\vec{S}$ as in Lemma \ref{FandS}. Assume $V= L$ and fix an $S \subseteq \omega_1$ which is almost disjoint from every $T \in \vec{S}$. Our countable support iteration $\langle \P_\alpha, \dot{\Q}_\alpha \; | \; \alpha < \omega_2\rangle$ is defined making use of the following vocabulary and notation. We can assume that all names for reals are {\em nice} in the sense that if $\dot{f}$ is an $\mathbb H$-name for a real for some $\mathbb H$ then $\dot{f}$ has the form $\bigcup_{i \in \omega}\{\langle \langle i, j_i\rangle \check{}, p \rangle \; | \; p \in A_i(\dot{f})\}$ where $A_i(\dot{f})$ is a maximal antichain of elements deciding $\dot{f}(\check{i})$. Assume moreover that for all $\alpha < \beta < \omega_2$ all $\P_\alpha$-names for reals appear before all $\P_\beta$-names for reals which are note $\P_\alpha$-names for reals in the canonical well order $<_L$ of $L$. For each $\alpha< \omega_2$ define $<_\alpha$ on the reals of $L[G_\alpha]$ (where $G_\alpha$ is $\P_\alpha$ generic) as follows. For each real $x \in L[G_\alpha]$ let $\gamma_x \leq \alpha$ least so that there is a nice $\P_\gamma$-name $\sigma_x^{\gamma_x}$ for $x$ and let $x <_\alpha y$ if and only if $\gamma_x < \gamma_y$ or $\gamma_x = \gamma_y = \gamma$ and $\sigma_x^\gamma <_L \sigma_y^\gamma$. If $G\subseteq \P_{\omega_2}$ is generic over $L$ then $<_G = \bigcup_{\alpha<\omega_2}  <_\alpha$ will be the desired well order. If $x, y \in L[G_\alpha]$ are reals and $x <_\alpha y$ let $x * y = \{2n \; | \; n \in x\} \cup \{2n + 1 \; | \; n \in y\}$. 

The iterated forcing $\P_{\omega_2}$ is defined recursively as follows. Let $\P_0$ be the trivial poset. Assume $\P_\alpha$ has been defined. Let $\dot{\Q}_\alpha = \dot{\Q}_\alpha^0 * \dot{\Q}^1_\alpha$ be a $\P_\alpha$-name for a poset so that $\dot{\Q}_\alpha^0$ is a proper forcing notion of cardinality at most $\aleph_1$ and $\dot{\Q}^1_\alpha$ is defined as follows. If $F(\alpha)$ is not of the form $\{\sigma^\alpha_x, \sigma^\alpha_y\}$ for some $x <_\alpha y$ in $L[G_\alpha]$, let $\dot{\Q}_\alpha^1$ be a $\P_\alpha * \dot{\Q}^0_\alpha$-name for a trivial poset. Otherwise $F(\alpha) = \{\sigma^\alpha_x, \sigma^\alpha_y\}$ for some reals $x, y \in L[G_\alpha]$. In this case we let $\dot{\Q}^1_\alpha$ be a $\P_\alpha * \dot{\Q}^0_\alpha$-name for a three step forcing notion $\mathbb K^0_\alpha * \dot{\mathbb K}^1_\alpha * \dot{\mathbb K}^2_\alpha$ where
\begin{enumerate}
\item
$\mathbb K^0_\alpha$ is a countable support iteration of forcing notions of the form $Q(S_i)$ where $i = \alpha + 2n$ for $n \in x * y$ and $i = \alpha + 2n + 1$ for $n \notin x * y$.
\item
In $V^{\P_\alpha * \dot{\Q}^0_\alpha * \mathbb K^0_\alpha}$ $\dot{\mathbb K}^1_\alpha$ names a localization poset $\mathcal L(\varphi_\alpha)$ where $\varphi_\alpha (\omega_1, X_\alpha)$ is a $\Sigma_1$ sentence, $X_\alpha \subseteq \omega_1$ codes $\alpha$, the pair $(x, y)$, a level of $L$ of size $\leq \aleph_1$, and the generic for $\P_\alpha * \dot{\Q}^0_\alpha * \mathbb K^0_\alpha$ in $V^{\P_\alpha * \dot{\Q}^0_\alpha * \mathbb K^0_\alpha}$ and $\varphi_\alpha (\omega_1, X_\alpha)$ says that $X_\alpha$ codes an ordinal $\bar{\alpha} < \omega_1$ and a pair $(x, y)$ so that $S_{\bar{\alpha} + 2n}$ is nonstationary for $n \in x * y$ and $S_{\bar{\alpha} + 2n + 1}$ is nonstationary for $n \notin x * y$.
\item
By the properties of the localization forcing, in $V^{\P_\alpha * \dot{\Q}^0_\alpha * \mathbb K^0_\alpha * \dot{\mathbb K}_\alpha^1}$ the generic can be coded by a single subset of $\omega_1$, call it $Y_\alpha$. Thus the ground model is of the form $L[Y_\alpha]$. Let $\dot{\mathbb K}^2_\alpha$ name $C(Y_\alpha)$ in this model.
\end{enumerate}
 With this list we complete the inductive definition of $\P_{\omega_2}$. 

\begin{lemma}[Lemma 15 of \cite{FF10}]
$\P_{\omega_2}$ is $S$-proper, $\omega_2$-c.c. and forces $2^{\aleph_0} = \aleph_2$.
\end{lemma}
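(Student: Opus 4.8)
This is Lemma~15 of \cite{FF10}, and the plan is to run exactly that argument, observing that the modifications introduced above are cosmetic for the purpose at hand: augmenting the sequence $\vec\mu$ affects only its definability inside $L_{\omega_1}[Y]$, not the forcing-theoretic behaviour of $C(Y)$, and the woven-in iterand $\dot{\Q}^0_\alpha$ is by fiat a proper poset of size $\le\aleph_1$, so it disrupts nothing. I would establish the three assertions by a single induction along $\alpha\le\omega_2$, the main tool being the preservation theorem for countable support iterations of proper — and, \emph{mutatis mutandis}, $T$-proper — forcing notions. For $S$-properness, I would show inductively that each $\P_\alpha$ is $S$-proper: limit stages are handled by the iteration theorem, and at a successor it suffices to see that $\dot{\Q}_\alpha=\dot{\Q}^0_\alpha*\dot{\Q}^1_\alpha$ is forced to be $S$-proper. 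Since a finite iteration of $S$-proper posets is again $S$-proper, one need only inspect the constituents: $\dot{\Q}^0_\alpha$ is proper by choice; $\mathcal L(\varphi_\alpha)$ has a $\sigma$-closed dense subset, hence is proper; $C(Y_\alpha)$ is proper by the Fact recorded above; and $\mathbb K^0_\alpha$ is a countable support iteration of club-shootings $Q(S_i)$, each of which is $(\omega_1\setminus S_i)$-proper, so — since $S$ was chosen almost disjoint from every member of $\vec S$ — the standard argument yields that $\mathbb K^0_\alpha$ is $S$-proper. In particular $\P_{\omega_2}$ is proper, hence preserves $\omega_1$.

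\textbf{The $\omega_2$-chain condition and the continuum.} Next I would note that every iterand has size $\le\aleph_1$: the conditions of $Q(S_i)$ are countable closed subsets of $\omega_1$, those of the $\sigma$-closed dense subset of $\mathcal L(\varphi_\alpha)$ are countable binary sequences, those of $C(Y_\alpha)$ are subtrees of $2^{<\omega}$, and $\dot{\Q}^0_\alpha$ is $\aleph_1$-sized by hypothesis. By the standard bookkeeping for countable support iterations of $\aleph_1$-sized posets over a model of $\CH$ (recall $V=L$ here), one obtains $|\P_\alpha|\le\aleph_1$ for all $\alpha<\omega_2$ and $|\P_{\omega_2}|\le\aleph_2$; the $\omega_2$-c.c.\ then follows from the usual $\Delta$-system argument applied to the countable supports of a putative antichain of size $\aleph_2$ (using $\CH$ for the $\Delta$-system lemma), since two conditions whose supports form a $\Delta$-system and which agree on its root are compatible. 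For the continuum, $2^{\aleph_0}\le\aleph_2$ because the $\omega_2$-c.c.\ bounds antichains by $\aleph_1$, so there are at most $\aleph_2^{\aleph_0}=\aleph_2$ (computed in $L$) nice names for reals; and $2^{\aleph_0}\ge\aleph_2$ because, fixing a pair $\{\sigma^0_x,\sigma^0_y\}$ of nice names arising from two reals $x<_0 y$ of $L$, the set of $\alpha$ with $F(\alpha)=\{\sigma^0_x,\sigma^0_y\}$ is unbounded in $\omega_2$ — hence of size $\aleph_2$ — and at each such stage $C(Y_\alpha)$ is invoked nontrivially and adds a new real coding $Y_\alpha$; distinctness of these $\aleph_2$ reals follows from distinctness of the sets $Y_\alpha$.

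\textbf{Main obstacle.} The one genuinely substantive point is the $S$-properness: it requires having the preservation theorem for countable support iterations of $T$-proper forcing on hand and checking that \emph{all} the iterands — in particular the club-shooting blocks $Q(S_i)$, which are only partially proper — are covered by it, which is precisely where the almost-disjointness of $S$ from the members of $\vec S$ enters. Everything else is routine cardinal arithmetic and iteration bookkeeping. Since all of this is exactly Lemma~15 of \cite{FF10} and the augmentations made above leave the relevant forcing-theoretic features untouched, my proof would in the end be a verification that nothing essential has changed.
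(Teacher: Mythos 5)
Your outline is correct and is essentially the argument the paper relies on: the paper gives no proof of its own here, importing the statement verbatim as Lemma~15 of \cite{FF10}, whose proof proceeds exactly as you describe (iteration theorem for $S$-proper posets applied iterand-by-iterand, with the almost-disjointness of $S$ from $\vec S$ handling the club-shooting blocks, plus the standard CH/$\Delta$-system and nice-name counting for the $\omega_2$-c.c.\ and the value of the continuum, and unboundedness of $F^{-1}$ for the lower bound). Your observation that the augmented elementarity in the definition of $\vec\mu$ and the woven-in proper $\dot{\Q}^0_\alpha$ leave this argument untouched is precisely the point the paper intends.
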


\begin{fact}[Lemma 3.7 of \cite{FFK14}]
$C(Y)$ has the Sacks property and therefore if each $\dot{\mathbb Q}_\alpha^0$ has the Sacks property (Laver property, $\baire$-bounding etc) then so does $\P_{\omega_2}$.
\end{fact}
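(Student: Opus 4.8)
The statement splits into two parts --- that $C(Y)$ has the Sacks property, and that the Sacks property is then inherited by the full countable support iteration $\P_{\omega_2}$ --- and I would prove them in that order. All the real content sits in the first part, and even that is essentially already isolated in Lemma~\ref{preprocessed}.

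\textbf{The Sacks property for $C(Y)$.} Recall that a poset $\Q$ has the Sacks property if for every $\Q$-name $\dot f$ for an element of $\baire$ and every $p\in\Q$ there are $q\leq p$ and a ground-model slalom $S\colon\omega\to[\omega]^{<\omega}$ with $q\forces\dot f(\check n)\in\check S(n)$ for all $n$ and $|S(n)|\leq 2^{n}$ (this width bound is one of the standard equivalent formulations). Given $\dot f$ and $p$, I would run verbatim the fusion construction in the proof of Lemma~\ref{preprocessed}, reading ``$q_t$ decides $\dot f\hook\check n$'' wherever that proof has ``$q_t$ decides $\dot X\cap\check n$''; the argument is insensitive to the difference. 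This yields some $j<\omega_1$ and a condition $q=p_\omega\leq p$ with $q\in\mathscr A_j$, every branch of $q$ coding $Y$ below $j$, and $q_t$ deciding $\dot f\hook\check n$ for every $n<\omega$ and every $t\in\mathrm{Split}_n(q)$; crucially --- this is the last clause of Lemma~\ref{preprocessed} --- $\mathscr A_j$ also contains the function recording these decisions. Since $q\subseteq 2^{<\omega}$ is a perfect binary tree, $\mathrm{Split}_{n+1}(q)$ has at most $2^{n}$ elements, so putting $S(n)=\{\,k : (\exists\,t\in\mathrm{Split}_{n+1}(q))\ q_t\forces\dot f(\check n)=\check k\,\}$ gives $S\in\mathscr A_j\subseteq V$ with $|S(n)|\leq 2^{n}$. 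Finally $q\forces\dot f(\check n)\in\check S(n)$: any $q^{*}\leq q$ contains, below some branch of itself, a unique node $t\in\mathrm{Split}_{n+1}(q)$, and then $q^{*}_{t}$ is a common extension of $q^{*}$ and $q_{t}$; since $q_{t}$ pins $\dot f(\check n)$ to a member of $S(n)$, no $q^{*}\leq q$ can force $\dot f(\check n)$ outside $S(n)$.

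\textbf{Preservation along $\P_{\omega_2}$.} The point is that each iterand $\dot\Q_\alpha=\dot\Q^0_\alpha*\dot\Q^1_\alpha$ has the Sacks property, after which $\P_{\omega_2}$ does by the standard preservation theorem for countable support iterations. By hypothesis $\dot\Q^0_\alpha$ has the Sacks property; and $\dot\Q^1_\alpha$, when non-trivial, is the (finite) iteration $\mathbb K^0_\alpha*\dot{\mathbb K}^1_\alpha*\dot{\mathbb K}^2_\alpha$ of: the countable support iteration $\mathbb K^0_\alpha$ of $\omega$-distributive club-shooting posets $Q(S_i)$ (each of which, adding no reals, trivially has the Sacks property, so $\mathbb K^0_\alpha$ does too by the same iteration theorem), the localization $\dot{\mathbb K}^1_\alpha=\mathcal L(\varphi_\alpha)$ (which has a $\sigma$-closed dense subset, hence adds no reals, hence has the Sacks property), and $\dot{\mathbb K}^2_\alpha=C(Y_\alpha)$ (Sacks property by the previous part); since the Sacks property is closed under composition, $\dot\Q^1_\alpha$, and therefore $\dot\Q_\alpha$, has it. Now apply the preservation theorem: a countable support iteration of $S$-proper posets each having the Sacks property has the Sacks property. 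One derives this by noting that the Sacks property is exactly $\baire$-bounding together with the Laver property, each of which is preserved by countable support iterations of proper forcings (Shelah's \emph{Proper and Improper Forcing}; also the Bartoszy\'nski--Judah monograph), and these preservation theorems go over \emph{mutatis mutandis} to the $S$-proper setting as recorded in Subsection~2.2; alternatively the Sacks property itself fits Shelah's ``preservation of $\sqsubseteq$'' schema. The same argument with ``Sacks property'' replaced throughout by ``Laver property'' or ``$\baire$-bounding'' yields the parenthetical variants of the statement, since $C(Y)$ and each of the other iterands have those properties as well.

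\textbf{Main obstacle.} There is none of real substance, which is precisely why the statement can be cited from \cite{FFK14}. The single thing that genuinely needs watching is, in the first part, that the slalom $S$ lie in the ground model $V$ rather than merely in the generic extension --- and this is exactly what the final clause of Lemma~\ref{preprocessed} provides, the recording function sitting in $\mathscr A_j\subseteq V$. The residual caveats --- that the iterands are $S$-proper rather than proper, and that iterating the Sacks property incurs the usual harmless reindexing of slalom widths at successor and limit stages --- are entirely routine.
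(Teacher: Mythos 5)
The paper offers no proof of this Fact at all---it is imported verbatim from \cite{FFK14} (Lemma 3.7)---so there is no in-paper argument to compare yours against; judged on its own, your reconstruction is correct and follows the expected lines. The $C(Y)$ half is exactly the fusion technology of Lemma \ref{preprocessed} redirected at a name $\dot f$ for an element of $\baire$: preprocessing so that $q_t$ decides $\dot f\restriction(n+1)$ for $t\in\mathrm{Split}_{n+1}(q)$ gives a slalom of width $2^n$ (there are exactly $2^n$ such nodes under the paper's splitting convention), and your density argument that no extension of $q$ escapes the slalom is sound, since for $q^*\leq q$ and $t\in q^*\cap\mathrm{Split}_{n+1}(q)$ the restriction $q^*_t$ is again a condition by the paper's standard ``perfect subtree definable in $\mathscr{A}_{|q^*|}$'' observation. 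Two small remarks. First, the one point you flag as delicate is actually automatic: the slalom $S$ is defined in $V$ from $q$, $\dot f$ and the forcing relation, so it lies in the ground model whether or not it lies in $\mathscr{A}_j$; the recording function of Lemma \ref{preprocessed} is needed elsewhere in the paper (to carry out constructions \emph{inside} $\mathscr{A}_\delta$), not for the Sacks property. Second, the step that genuinely leans on the ``mutatis mutandis'' disclaimer is the application of the proper-forcing preservation theorems (for the Laver property and $\baire$-bounding, or for the Sacks property directly) to iterands that are only $S$-proper, and, inside $\mathbb{K}^0_\alpha$, only $(\omega_1\setminus S_i)$-proper; the paper itself glosses this in Subsection 2.2 and \cite{FFK14} carries it out, so your level of detail matches the source. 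Your choice to get the Sacks property for $\mathbb{K}^0_\alpha$ from the preservation theorem, rather than arguing that this distributive iteration adds no reals, is a clean way to sidestep a genuinely delicate question and is perfectly adequate for the statement being proved.
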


The point is the following, which is the the first main result of \cite{FF10}.

\begin{theorem}[Theorem 1 of \cite{FF10}]
Forcing with $\P_{\omega_2}$ adds a $\Delta^1_3$ well-order of the reals.
\end{theorem}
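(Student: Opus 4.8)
The plan is to recall the proof of Theorem~1 of \cite{FF10}, observing first that the augmentation of $\vec\mu$ made in Subsection~2.3 is immaterial here: as the remark above notes, the only property of $\vec\mu$ — equivalently, of the sequence $\langle \mathscr A_i \mid i<\omega_1\rangle$ with $\mathscr A_i = L_{\mu_i}[Y\cap i]$ — used in the argument is that it is uniformly definable over $L_{\omega_1}[Y]$ from $Y$, and this is preserved. So one shows that $<_G := \bigcup_{\alpha<\omega_2}{<_\alpha}$ is a $\Delta^1_3$ well-order of $\R^{L[G]}$ exactly as there. That it is a well-order is built into the construction: the relations $<_\alpha$ cohere, since the stipulated ordering of nice names places every $\P_\alpha$-name before the genuinely new $\P_\beta$-names ($\beta>\alpha$) in $<_L$, so $\gamma_x$ does not depend on the ambient stage and ${<_\alpha} = {<_\beta}\restriction\R^{L[G_\alpha]}$ for $\alpha<\beta$; each $<_\alpha$ well-orders $\R^{L[G_\alpha]}$; and by the $\omega_2$-c.c.\ of $\P_{\omega_2}$ every real of $L[G]$ lies in some $L[G_\alpha]$ with $\alpha<\omega_2$. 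The only remaining point is the complexity, and here it suffices to produce a $\Sigma^1_3$ definition: since $<_G$ is a strict linear order of \emph{all} reals, $x<_G y$ holds iff $x\neq y$ (arithmetic) and $\neg(y<_G x)$, so a $\Sigma^1_3$ definition automatically yields a $\Pi^1_3$ one, hence $\Delta^1_3$.

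The $\Sigma^1_3$ definition rests on the coding/decoding machinery of $C(Y)$. By the third of the Facts on $C(Y)$, if $R=\bigcap G^{C}$ is the real added by an instance $C(Y_\alpha)$ of the coding forcing over the ground model $L[Y_\alpha]$, then $j\in Y_\alpha \iff \mathscr A_j[R]\models\ZF^-$ for every $j<\omega_1$; since each $\mathscr A_j$ is recovered recursively from $Y_\alpha\cap j$ via $\vec\mu$, the set $Y_\alpha$ is decoded from $R$ by a projectively simple recursion. At the stage $\alpha$ at which the coding for a pair $(x,y)$ is performed — such an $\alpha$, with $x,y\in L[G_\alpha]$, exists because $F^{-1}(\{\sigma_x,\sigma_y\})$ is unbounded and the name bookkeeping places $\sigma^\alpha_x,\sigma^\alpha_y$ accordingly — the set $Y_\alpha$ codes an ordinal $\bar\alpha$ collapsing $\alpha$, the pair $(x,y)$ together with the comparison ``$\gamma_x<\gamma_y$, or $\gamma_x=\gamma_y$ and $\sigma^{\gamma_x}_x<_L\sigma^{\gamma_y}_y$'', a level of $L$, and, via the club-shooting iterand $\mathbb K^0_\alpha$, the pattern that $S_i$ (from the canonical sequence $\vec S$ of Lemma~\ref{FandS}) is nonstationary exactly when $i=\bar\alpha+2n$ with $n\in x*y$ or $i=\bar\alpha+2n+1$ with $n\notin x*y$. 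The point of the localization iterand $\mathcal L(\varphi_\alpha)$ is that this pattern — a $\Sigma_1$ statement about $(\omega_1,X_\alpha)$ — then holds in \emph{every} suitable model containing $X_\alpha\restriction\omega_1^{\Me}$, by the second of the Facts on localization, and hence can be checked inside countable transitive fragments.

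Accordingly, the $\Sigma^1_3$ formula defining $x<_G y$ asserts: there exist reals $R$ and $z$, with $z$ coding a countable transitive suitable model $\bM$ such that $x,y\in\bM$ and $\bM$ believes ``$R$ codes a set $\bar Y\subseteq\omega_1^{\bM}$ via the $\mathscr A_j$-test, $\bar Y$ decodes to an ordinal $\bar\alpha$ and to the pair $(x,y)$ with the above $<_L$-comparison of names holding, and $S_{\bar\alpha+2n}$ is nonstationary iff $n\in x*y$'' (and the parity counterpart); and, in addition, for \emph{every} real $w$ that codes a countable suitable model $N$ with $R,x,y\in N$ and $\omega_1^N$ equal to the relevant critical point, $N\models\varphi_{\bar\alpha}(\omega_1^N, X_{\bar\alpha}\cap\omega_1^N)$. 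The two leading existential real quantifiers, the $\Pi^1_1$ clauses that $z$ and $w$ code well-founded structures, the arithmetic-in-the-code internal assertions, and the single universal real quantifier in the localization clause combine to a $\Sigma^1_3$ matrix. The forward direction is straightforward: at the stage $\alpha$ processing $(x,y)$, the iterand $\dot\Q^1_\alpha=\mathbb K^0_\alpha*\dot{\mathbb K}^1_\alpha*\dot{\mathbb K}^2_\alpha$ kills exactly the right $S_i$'s, localizes $\varphi_\alpha$, and adds the coding real $R$; taking a countable elementary submodel of a large enough $H_\theta$ in $L[G_{\alpha+1}]$ containing $R,x,y,\alpha$ and all parameters and passing to its transitive collapse produces a suitable $\bM$ witnessing the formula, legitimacy holding because the localization generic really was added.

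The main obstacle — and the substance of the argument in \cite{FF10} — is the backward direction: showing that a witnessing pair $(R,z)$ cannot ``lie'', i.e.\ that if such reals exist then the nonstationarity pattern $\bM$ asserts genuinely holds in $L[G]$, and therefore, by the design of $\P_{\omega_2}$, must have arisen from a stage $\alpha$ processing $(x,y)$ at which $x<_\alpha y$. This relies on three facts, all established in \cite{FF10} and unaffected by the $\vec\mu$-augmentation: (i) the almost-disjointness and uniform $\Sigma_1$-definability of $\vec S$ across suitable models (Lemma~\ref{FandS}), so that the block of $S_i$'s attached to one coding task never interferes with another and ``$S_i$ nonstationary'' is unambiguous; (ii) the fact that $Q(S_i)$ shoots a genuine club through $\omega_1\setminus S_i$, making nonstationarity of $S_i$ permanent and — via the localization witness supplied by the universal clause — absolute between $L[G]$ and the relevant suitable models in both directions; and (iii) the fact that $\P_{\omega_2}$ never kills an $S_i$ except when the bookkeeping $F$ explicitly calls for it. Granting these, the existence of a witness $(R,z)$ forces $x<_G y$, completing the verification that $<_G$ is $\Sigma^1_3$, and hence $\Delta^1_3$.
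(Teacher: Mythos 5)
Your proposal is correct and follows essentially the same route as the paper, which likewise gives only a sketch deferring to \cite{FF10}: both arguments rest on the $\Sigma^1_3$ characterization of $x<_G y$ via a coding real $R$ whose nonstationarity pattern for the $S_i$'s is verified in all suitable countable models (with the localization step making this absolute), and both obtain $\Delta^1_3$ from the totality of $<_G$. Your additional observation that the $\Sigma^1_5$-elementarity augmentation of $\vec\mu$ is immaterial matches the remark the paper itself makes in Subsection 2.3.
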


\begin{proof}[Proof Sketch]
Let $G\subseteq \P_{\omega_2}$ be generic over $L$ and work in $L[G]$. In the proof of \cite[Theorem 1]{FF10} it is shown that the following holds:

\begin{center}
For all reals $x, y$ we have $x <_G y$ if and only if there is a real $R$ such that for every suitable, countable model $\Me$ containing $R$ there is an ordinal $\alpha < \omega_2^\Me$ so that $S^\Me_{\alpha + 2n}$ is nonstationary in $\Me$ for $n \in x * y$ and $S^\Me_{\alpha + 2n + 1}$ is nonstationary in $\Me$ for $n$ not in $x * y$. 
\end{center}

Counting quantifiers one can see that this definition is $\Sigma^1_3$ however, since $<_G$ is a total order on the reals, we have that $\nleq_G$ is also $\Sigma^1_3$.
\end{proof}

Let us note that the use of $\dot{\Q}_\alpha^0$ is unnecessary for forcing the $\Delta^1_3$ well order. However, this iterand represents additional forcing notions ``woven into" the iteration, allowing more flexibility in the final model, for example forcing various cardinal characteristic inequalities.

Before moving on to the next section let us make some brief remarks about the forcing notion $\P_{\omega_2}$. Observe that the only iterands which add reals are those of the form $C(Y)$ and, potentially, $\dot{\Q}_\alpha^0$. In what follows, we will be interested in proving preservation properties for $\P_{\omega_2}$. Many times this will take the form of showing that some property holds of forcing notions of the form $C(Y)$ from which, combined with the fact that the other iterands do not add reals (or, in the case of the $\dot{\Q}_\alpha^0$'s are assumed to have some property), we will be able to conclude the property holds of $\P_{\omega_2}$. The first such property we will consider is that of preserving P-points.

\section{Preservation of Definable P-Points}

Recall that a P-point is an ultrafilter $\mathscr U$ with the property that given any countable set $\mathcal A \subseteq \mathscr U$ there is an $X \in \scrU$ so that $X \subseteq^* A$ for all $A \in \mathcal A$. It's known that if $V = L$ there is a P-point $\scrU$ which has a $\Pi^1_1$ base $\hat{\scrU}$ so that $\ZFC \proves$``$\hat{\scrU} \subseteq L$" see \cite{Schilhanultrafilter}. Fix such a $\scrU$. The main result of this section is the following.

\begin{lemma}
If $L[Y] \models$``$\scrU$ is a P-point" then the poset $C(Y)$ forces that $\scrU$ is a P-point.
\label{mainlemmappoint}
\end{lemma}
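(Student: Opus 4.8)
The plan is to carry out, inside the forcing $C(Y)$, the classical fusion argument that Sacks forcing preserves P-points, performing the whole construction inside a single model $\mathscr{A}_\delta = L_{\mu_\delta}[Y\cap\delta]$ so that the perfect trees it produces are automatically conditions. The only genuinely new feature will be that $\mathscr{A}_\delta$ must be able to \emph{recover} $\scrU$, and this is exactly where the $\Pi^1_1$, $L$-definable base $\hat{\scrU}$ is used.

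First I would reduce the lemma to two combinatorial statements about $C(Y)$. \emph{(i)} For every $p\in C(Y)$ and every $C(Y)$-name $\dot X$ for a subset of $\omega$ there are $q\leq p$ and $B\in\scrU$ with $q\forces\check B\subseteq\dot X$ or $q\forces\check B\cap\dot X=\emptyset$. \emph{(ii)} For every $p$ and every $C(Y)$-name $\langle\dot X_n\mid n<\omega\rangle$ for a $\subseteq$-decreasing sequence of subsets of $\omega$, each forced to lie in the filter generated by $\scrU$, there are $q\leq p$ and $B_n\in\scrU$ with $q\forces\check B_n\subseteq\dot X_n$. Statement (i) makes the set of conditions $q$ for which some $B\in\scrU$ is forced either into or out of $\dot X$ dense, so $C(Y)$ forces $\scrU$ to generate an ultrafilter $\scrU'$, with $\scrU'\cap V=\scrU$ since $\scrU$ is closed under $\supseteq^*$. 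Granting this, (ii) finishes the job: the $B_n$ (which we may take decreasing) are produced \emph{in $V$}, so the P-pointness of $\scrU$ \emph{in $V$} gives an $X\in\scrU$ with $X\subseteq^* B_n$ for all $n$, whence $q\forces\check X\subseteq^*\dot X_n$ and $X\in\scrU\subseteq\scrU'$. Statement (ii) will follow from the construction for (i) by dovetailing the names $\dot X_n$ into one fusion below a single $q$, so I will describe the construction only for a single name.

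To prove (i), fix $p$ and $\dot X$, choose a countable $M\prec L_{\omega_2}[Y]$ with $p,\dot X\in M$, let $\bar M$ be its transitive collapse and $\delta=\omega_1^{\bar M}$; as in \cite{FF10} and the proof of Lemma \ref{preprocessed} we have $\bar M\in\mathscr{A}_\delta$, hence $p,\dot X\in\mathscr{A}_\delta$, and after replacing $p$ by an extension in $\mathscr{A}_\delta$ with $|p|=\delta$ we may assume every branch of $p$ codes $Y$ below $\delta$. Two features of $\mathscr{A}_\delta$ drive the argument. First, as in the ``strategy'' paragraph preceding Lemma \ref{preprocessed}, any perfect subtree lying in $\mathscr{A}_\delta$ of $p$ is again a condition of $C(Y)$ (its branches still code $Y$ below $\delta$); since $\mathscr{A}_\delta\models\ZF^-$ and thinks $\delta$ is countable, it can run an $\omega$-length fusion as a set-sized construction, and every tree appearing in it is a genuine condition. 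Second, $\mathscr{A}_\delta$ computes $\hat{\scrU}$ correctly --- $\hat{\scrU}\cap\mathscr{A}_\delta=\hat{\scrU}^{\mathscr{A}_\delta}\subseteq\hat{\scrU}\subseteq\scrU$ by $\Pi^1_1$-absoluteness together with $\ZFC\proves\hat{\scrU}\subseteq L$ --- and, using the elementarity built into the definition of $\mathscr{A}_\delta$ to push the relevant low-level projective statement down from $L[Y]$ (where $\scrU$ is a P-point by hypothesis) to $L_{\omega_1}[Y\cap\delta]$ and thence to $\mathscr{A}_\delta$, it satisfies ``$\hat{\scrU}$ generates a P-point'' (cf.\ the Remark in Subsection~2.3). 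An arbitrary, non-definable P-point would be invisible to $\mathscr{A}_\delta$, and this is precisely the restriction to ``definable witnesses'' alluded to in the introduction.

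Working inside $\mathscr{A}_\delta$ I would then run the usual Sacks-style fusion for preserving P-points against $\dot X$, using $\hat{\scrU}^{\mathscr{A}_\delta}$ in place of $\scrU$: build a fusion sequence $p=p_0, p_1, p_2,\dots$ (with $p_{n+1}\leq_n p_n$), preprocessing $\dot X$ along the way exactly as in Lemma \ref{preprocessed}, and simultaneously shrinking a $\subseteq$-decreasing sequence of $\hat{\scrU}^{\mathscr{A}_\delta}$-sets obtained from the outer hulls $X^{p_n}_t$ attached to the $n$-splitting nodes; at a stage where some $\omega\setminus X^{p_n}_t$ turns out to be $\scrU$-positive one halts and outputs $q=(p_n)_t$ and $B=\omega\setminus X^{p_n}_t$, which forces $\check B\cap\dot X=\emptyset$, and otherwise one continues to $p_\omega=\bigcap_n p_n\in C(Y)\cap\mathscr{A}_\delta$, whereupon the final case analysis of the classical argument produces $q\leq p_\omega$ and $B\in\scrU$ with $q\forces\check B\subseteq\dot X$ (or, in the remaining case, with $q\forces\check B\cap\dot X=\emptyset$). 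Since $\hat{\scrU}^{\mathscr{A}_\delta}\subseteq\scrU$ every set returned is genuinely in $\scrU$, and since everything takes place in $\mathscr{A}_\delta$ every tree returned is genuinely in $C(Y)$, so $q$ and $B$ are as required; for (ii) one dovetails the names $\dot X_n$ through the same fusion. The hard part will be exactly this tension between the fusion, which wants to thin perfect trees freely, and the definition of $C(Y)$, which admits only trees whose branches code $Y$ correctly --- met by confining the whole construction to one $\mathscr{A}_\delta$, which is legitimate only because $\scrU$ is definable enough for $\mathscr{A}_\delta$ to reconstruct it and certify it a P-point there. The remaining delicacy, internal to the Sacks engine, is the step reconciling the possibly disagreeing decisions of $\dot X$ at the various splitting nodes by descending to a $\scrU$-large common behaviour; this is classical for Sacks forcing and transfers verbatim since it is run inside $\mathscr{A}_\delta$ with $\hat{\scrU}^{\mathscr{A}_\delta}$.
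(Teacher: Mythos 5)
Your overall architecture is exactly that of the paper: reduce to deciding a single name $\dot X$ into or out of a member of $\scrU$, fix a countable $M\prec L_{\omega_2}[Y]$ with collapse point $\delta$, and run the entire fusion inside $\mathscr A_\delta$, using that $\hat{\scrU}$ is $\Pi^1_1$ with $\hat\scrU\subseteq L$ so that $\mathscr A_\delta$ computes it correctly and, by the $\Sigma^1_5$-elementarity built into $\vec\mu$, believes it generates a P-point; this guarantees every tree produced is a genuine condition of $C(Y)$. All of that scaffolding is right and is the same as the paper's.

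The gap is in the combinatorial engine you delegate to ``the classical argument.'' As you describe it --- outer hulls $X^{p_n}_t$, a decreasing sequence of $\scrU$-sets, halting when some $\omega\setminus X^{p_n}_t$ lands in $\scrU$, and otherwise a ``final case analysis'' below the fusion $p_\omega$ --- the positive case does not go through. First, outer hulls cannot be used to force elements \emph{into} $\dot X$: $m\in X^{p_n}_t$ only says some extension of $(p_n)_t$ puts $m$ in, and distinct $m$'s may need incompatible extensions; to commit finite blocks one needs \emph{interpretations} (the paper uses the leftmost interpretation $L_{q_s}(\dot X)$, precisely because it is definable in $\mathscr A_\delta$ from the preprocessed condition). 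Second, and more seriously, even with interpretations, taking a pseudo-intersection of the countably many relevant $\scrU$-sets does not suffice: during the fusion the set $B$ of integers actually committed into $\dot X$ is chosen adaptively, window by window, and plain P-pointness gives no reason that this particular $B\in\scrU$ (the other ``half'' of the pseudo-intersection could be the $\scrU$-large part). This is exactly the point where the paper invokes the Galvin--Shelah game characterization: from the dichotomy ``below some node all leftmost interpretations lie in $\scrU$'' versus ``densely many do not,'' one defines in $\mathscr A_\delta$ a strategy for player I in $G(\scrU)$ whose side effect is a fusion sequence; since $\scrU$ is a P-point the strategy is not winning, and a losing play --- which exists \emph{inside} $\mathscr A_\delta$ by correctness --- simultaneously yields $B=\bigcup_n a_n\in\scrU\cap\mathscr A_\delta$ and a fusion $q_\omega\in\mathscr A_\delta\cap C(Y)$ forcing $\check B\subseteq\dot X$ (or, in the second case, $\check B\cap\dot X=\emptyset$). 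Without this (or an equivalent reproof of that direction of Galvin--Shelah), your sketch stops exactly at the step that carries the content of the lemma; note also that your ``halt'' case (complement of a hull in $\scrU$) is not the correct complementary case of the dichotomy, so the residual situation where hulls are $\scrU$-large but interpretations are not is left untreated.
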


Given this lemma, alongside the preservation of P-points along countable support iterations of ($S$)-proper posets, \cite[Theorem 21.11]{Hal17} we get the following.

\begin{theorem}
If $G\subseteq\P_{\omega_2}$ is generic over $L$ then in $V[G]$ there is a $\Pi^1_1$ ultrafilter base for a P-point of size $\aleph_1$. In particular, it's consistent that there is a $\Pi^1_1$-ultrafilter base for a P-point, a $\Delta^1_3$ well order of the reals and $\mathfrak{u} = \aleph_1 < \mfc = \aleph_2$.
\label{ppointthm}
\end{theorem}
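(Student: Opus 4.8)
The plan is to show that $C(Y)$ preserves P-pointness of $\scrU$ by exploiting the preprocessing machinery of Lemma \ref{preprocessed} together with the definability hypothesis $\ZFC\proves$``$\hat\scrU\sub L$''. Recall that preserving ``$\scrU$ is an ultrafilter'' plus ``$\scrU$ generates a P-point'' reduces, by a standard argument, to the following two tasks: (i) given a $C(Y)$-name $\dot X$ for a subset of $\omega$ and a condition $p$, find $q\le p$ and $A\in\scrU$ with $q\forces A\subseteq\dot X$ or $q\forces A\subseteq\omega\setminus\dot X$ (ultrafilter preservation), and (ii) given a $C(Y)$-name $\dot{\va}=\langle \dot a_n\mid n<\omega\rangle$ for a countable descending sequence in $\scrU$, find a single ground-model $X\in\scrU$ and a condition forcing $X\subseteq^*\dot a_n$ for all $n$ (the P-point property). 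Since $\scrU$ is already an ultrafilter in $L[Y]$ and $C(Y)$ adds no new subsets of $\omega$ that aren't accounted for by names, it in fact suffices to handle (ii) in the form: from a countable family of $C(Y)$-names for elements of $\scrU$ one can find an honest pseudo-intersection inside $\scrU$.

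The key step is a \emph{fusion-plus-preprocessing} argument. First apply Lemma \ref{preprocessed} (iteratively, dovetailing over the countably many names $\dot a_n$) to pass below $p$ to a condition $q\in\mathscr A_j$ for some $j<\omega_1$ such that every branch of $q$ codes $Y$ below $j$ and $\mathscr A_j$ contains the decision functions $t\mapsto i_n(t)$ recording, for each $n$-splitting node $t$, the finite set $(q)_t$ forces $\dot a_n\cap\check n$ to equal. Now work \emph{inside} $\mathscr A_j=L_{\mu_j}[Y\cap j]$: this is a model of $\ZF^-$ in which, by absoluteness of the $\Pi^1_1$ definition of $\hat\scrU$ and the hypothesis $\hat\scrU\subseteq L$, the set $\scrU\cap\mathscr A_j$ is (the restriction of) an ultrafilter recognizable inside $\mathscr A_j$ — this is exactly why the $\Sigma^1_5$-elementarity built into $\vec\mu$ was needed. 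For each splitting node $t$, the finite approximations $i_n(t)$ determine, along each branch through $t$, an element of $\scrU$ (a limit of the sets $X^q_t$ along that branch, or rather the pseudo-intersection of the $\dot a_n$ read off along a cofinal branch); since $\scrU$ is a P-point in $\mathscr A_j$, inside $\mathscr A_j$ one can diagonalize over the (countably many, from the external point of view, but handled level-by-level internally) splitting nodes to produce a single $X\in\scrU\cap\mathscr A_j$ with $X\subseteq^* (\text{what each branch forces }\dot a_n\text{ to be})$ for every $n$. Finally, since $X\in\mathscr A_j$ and every branch of $q$ codes $Y$ below $j$, no thinning of $q$ is needed: $q$ already forces $X\subseteq^*\dot a_n$ for all $n$, and $X\in\scrU$, as required.

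The main obstacle — and the point where the definability hypothesis does essential work — is establishing that $\scrU\cap\mathscr A_j$ is genuinely a P-point \emph{from the perspective of} $\mathscr A_j$, so that the internal diagonalization is available. This needs: (a) $\hat\scrU\cap\mathscr A_j$ is correctly computed in $\mathscr A_j$ (absoluteness of $\Pi^1_1$ statements between $\mathscr A_j$ and $L_{\omega_1}[Y\cap j]$, hence with $L$, guaranteed by the $\Sigma^1_5$-elementarity of the $\mu_i$'s and the fact $\hat\scrU\subseteq L$); (b) ``$\hat\scrU$ generates an ultrafilter'' and ``$\hat\scrU$ generates a P-point'' are (lightface projective, hence) absolute to $\mathscr A_j$ — again covered by the fixed finite-level elementarity, per the Remark. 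One must be slightly careful that the countable descending sequence $\dot{\va}$ is a \emph{name}, not a ground-model object, so the ``diagonalization over branches'' really is a diagonalization over the tree structure of $q$ as recorded in $\mathscr A_j$ by the functions $i_n$; but this is precisely what Lemma \ref{preprocessed} delivers. The remaining verifications — that the resulting $X$ works simultaneously for all $n$, and the reduction of ultrafilter-preservation to this case — are routine. Granting Lemma \ref{mainlemmappoint}, Theorem \ref{ppointthm} follows immediately: $C(Y)$-iterands preserve $\scrU$ by the Lemma, the non-real-adding iterands ($Q(S)$, $\mathcal L(\varphi)$, and any $\sigma$-closed pieces) trivially preserve it, the $\dot\Q^0_\alpha$ are assumed to have the Sacks property (hence preserve P-points by \cite[Theorem 21.11]{Hal17} applied locally, or are chosen trivial here), and the countable-support iteration theorem for preservation of P-points along ($S$-)proper iterations, \cite[Theorem 21.11]{Hal17}, assembles these into the statement that $\P_{\omega_2}$ preserves $\scrU$; since $\hat\scrU$ is $\Pi^1_1$ of size $\aleph_1$ and, by the previous section, $\P_{\omega_2}$ adds a $\Delta^1_3$ well-order and forces $2^{\aleph_0}=\aleph_2$, and $\scrU$ remains an ultrafilter base witnessing $\mfu\le\aleph_1$, we conclude $\mfu=\aleph_1<\mfc=\aleph_2$ together with a $\Delta^1_3$ well-order, as claimed.
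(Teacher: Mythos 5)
There is a genuine gap, and it sits exactly where the paper's proof does its real work. You reduce the problem to (i) ultrafilter preservation and (ii) finding pseudo-intersections, and then dismiss (i) with the remark that $\scrU$ is already an ultrafilter in $L[Y]$ and that $C(Y)$ ``adds no new subsets of $\omega$ that aren't accounted for by names.'' That is a non sequitur: $C(Y)$ is a Sacks-type forcing that adds reals, and the whole content of Lemma \ref{mainlemmappoint} is to show that a new real named by $\dot{X}$ cannot split every element of $\scrU$ into two infinite pieces. Once (i) is established, (ii) is essentially free from properness together with P-pointness in the ground model (a countable set of ground-model elements of $\scrU$ in the extension is covered by a ground-model countable set, which has a pseudo-intersection in $\scrU$); so your proposal spends its effort on the easy half and assumes the hard half. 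The paper attacks (i) directly: after preprocessing, it proves a dichotomy (either some $q_t$ has all leftmost interpretations $L_{q_s}(\dot{X})$ in $\scrU$, or there is an $\mathscr A_\delta$-definable assignment of interpretations lying outside $\scrU$), and in each case it defines in $\mathscr A_\delta$ a strategy for player I in the Galvin--Shelah P-point game $G(\scrU)$; since $\scrU$ is a P-point this strategy is not winning, and a losing play taken inside $\mathscr A_\delta$ simultaneously yields a set $B\in\scrU\cap\mathscr A_\delta$ and a fusion sequence whose limit $q_\omega$ lies in $\mathscr A_\delta$ (hence is a condition of $C(Y)$) and forces $\check{B}\subseteq\dot{X}$ or $\check{B}\cap\dot{X}=\emptyset$. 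Your proposal contains no analogue of this dichotomy or of the game argument.

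A second, related error: you assert that after preprocessing ``no thinning of $q$ is needed: $q$ already forces $X\subseteq^*\dot a_n$ for all $n$.'' Interpretations are not decisions. The preprocessed $q$ only decides $\dot{X}\cap\check{n}$ below each $n$-splitting node; different branches of $q$ force incompatible values of the name, so $q$ itself forces no inclusion of an infinite ground-model set into $\dot{X}$ (nor the reverse), and the sets read off along branches of $q$ need not lie in $\scrU$ at all --- that is precisely what the paper's case distinction handles. The thinning (a fusion carried out inside $\mathscr A_\delta$, so that the limit tree has $|q_\omega|\leq\delta$ and its branches code $Y$ below $\delta$) is unavoidable, and the reason the definability of $\hat{\scrU}$ and the $\Sigma^1_5$-elementarity of the $\mu_i$'s matter is exactly that $\mathscr A_\delta$ must be able to carry out this fusion while consulting $\scrU$. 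Your concluding derivation of Theorem \ref{ppointthm} from the lemma (iteration theorem for P-point preservation along $S$-proper countable support iterations, non-real-adding iterands, suitable $\dot{\Q}^0_\alpha$) matches the paper, but it rests on a lemma you have not actually proved.
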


Towards proving Lemma \ref{mainlemmappoint} work in $L[Y]$ where $Y \subseteq \omega_1$ was added by some forcing in $L$. Note that if $L_\alpha[Y] \prec_{\Sigma^1_5} L_{\omega_1}[Y]$ then the fact that $\hat{\scrU}$ is an ultrafilter base for a P-point is expressible in $L_{\alpha}[Y]$ and will be true by $\Sigma^1_5$ (in fact $\Pi^1_3$) elementarity. This is because $\hat{\scrU}$ is a P-point base if and only if $$\forall \{X_n\; | \; n < \omega\} [ (\exists n< \omega \, \forall Y \in \hat{\scrU} \, Y \nsubseteq^* X_n) \lor \exists Y \, (Y \in \hat{\scrU} \land \forall n < \omega \, Y \subseteq^* X)].$$ 

This is the type of quantifier counting we referred to in Remark 1 above.

We need to first investigate {\em interpretations of} $\dot{X}$, see \cite[p.362]{BarJu95} where a similar idea is introduced for Miller forcing.
\begin{definition}
Given a condition $p \in C(Y)$ a $p$-{\em interpretation of} $\dot{X}$ is a set $I_p(\dot{X}) \in [\omega]^\omega$ so that for each $n < \omega$ there is a $q \leq p$ forcing that $\dot{X} \cap \check{n} = \check{I}_p(\dot{X}) \cap \check{n}$. 
\end{definition}

\begin{proposition}
If $q$ is preprocessed for $\dot{X}$ then there is a $q$-interpretation of $\dot{X}$ which is definable from $q$ in any $\mathscr A_\delta \ni q$ which has the function $i$ described in Lemma \ref{preprocessed}.
\end{proposition}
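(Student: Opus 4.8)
The plan is to build a $q$-interpretation of $\dot{X}$ directly from the "tree-indexed" data the function $i$ records. Recall that since $q$ is preprocessed for $\dot{X}$, for every $n<\omega$ and every $t\in{\rm Split}_n(q)$ the condition $q_t$ decides $\dot{X}\cap\check n$, and $i(t)=a_t$ is exactly that finite set, i.e.\ $q_t\forces\dot X\cap\check n=\check a_t$. The key observation is that the sets $a_t$ are \emph{coherent along the tree}: if $s\in{\rm Split}_n(q)$ and $t\in{\rm Split}_m(q)$ with $n<m$ and $s\subseteq t$, then $q_t\le q_s$, so $q_t\forces \dot X\cap\check n=\check a_s$ and also $q_t\forces\dot X\cap\check n = \check a_t\cap\check n$; hence $a_s = a_t\cap n$. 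Therefore, fixing any branch $x\in[q]$ and letting $t_0\subseteq t_1\subseteq t_2\subseteq\cdots$ enumerate the splitting nodes of $q$ along $x$ (with $t_n\in{\rm Split}_{n+1}(q)$, say), the sets $a_{t_0}\subseteq a_{t_1}\subseteq\cdots$ form an increasing chain, and $I:=\bigcup_{n<\omega}a_{t_n}$ is a well-defined subset of $\omega$.

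Next I would check that $I$ is actually a $q$-interpretation of $\dot X$, i.e.\ that for each $n$ there is $r\le q$ with $r\forces\dot X\cap\check n=\check I\cap\check n$. Given $n$, pick the first splitting node $t$ along $x$ that is in ${\rm Split}_m(q)$ for some $m\ge n$; then $a_t\cap n = a_{t_{k}}\cap n$ for all large $k$ (by the coherence just established, all the $a_{t_k}$ for $k\ge$ some index agree below $n$ once we have passed level $n$), so $I\cap n = a_t\cap n$. Since $q_t\forces\dot X\cap\check n=\check a_t$, we get $q_t\forces\dot X\cap\check n=\check I\cap\check n$, and $q_t\le q$ is the required condition. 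One should also confirm $I\in[\omega]^\omega$ if that is needed — but in fact the definition of $p$-interpretation in the paper only asks $I_p(\dot X)\in[\omega]^\omega$; if the outer hull / the generic set is forced infinite this is automatic, and otherwise one can pad, or more carefully the cited convention (as with Miller forcing in \cite{BarJu95}) is flexible here, so I would not dwell on it.

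Finally, the definability claim: the branch $x$ along which we read off the $t_n$ can be taken to be, say, the leftmost branch of $q$, which is definable from $q$; the sequence of splitting nodes $\langle t_n\mid n<\omega\rangle$ along it is definable from $q$; and then $I = \bigcup_n i(t_n)$ is definable from $q$ and $i$. Since any $\mathscr A_\delta\ni q$ that contains the function $i$ of Lemma~\ref{preprocessed} can carry out exactly this recursion — computing splitting nodes of $q$, following the leftmost branch, and taking a countable union of finite sets, all absolute, bounded operations well within $\ZF^-$ — it computes $I$, as desired. I expect the only genuinely delicate point is getting the coherence bookkeeping exactly right (indices of which ${\rm Split}_n$ a node lies in, and the precise $n$ at which the finite approximations stabilize), but this is routine once the comparison $q_t\le q_s$ for $s\subseteq t$ is used; everything else is formal.
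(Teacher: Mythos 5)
Your proposal is correct and matches the paper's argument: the paper also works along the leftmost branch of $q$, defining the interpretation as $\bigcap_{n<\omega}X^q_{l\restriction n}$, which coincides with your $\bigcup_n i(t_n)$ by exactly the coherence you establish, and the definability claim is verified the same way from $q$ and $i$ inside $\mathscr A_\delta$. (The paper likewise does not dwell on infinitude of the interpretation, so your brief remark on that point is consistent with its treatment.)
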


\begin{proof}
Fix $q$ preprocessed and let $l$ be the leftmost branch of $q$. Now define $I_q(\dot{X}) = \bigcap_{n < \omega} X^q_{l\hook n}$. To see this is an interpretation, fix $n$ and let $s$ be the $n^{\rm th}$ splitting node of $q$ in $l$. Since $q$ is preprocessed, $q_s$ decides $\dot{X} \cap \check{n}$, and $I_q(\dot{X}) \cap n$ agrees with it on this finite set hence $q_s \forces \dot{X} \cap \check{n} = \check{I}_q(\dot{X})$ as needed.

For the definability part, note that if $q \in \mathscr A_\delta$ is preprocessed and constructed as in Lemma \ref{preprocessed} then $I_q(\dot{X})$ is definable in $\mathscr{A}_\delta$ since $k \in I_q(\dot{X})$ if and only if for the $n$-splitting node $t$ of $q$ in $l$ we have that $k \in i(t)$. 
\end{proof} 

We will refer to such a $q$-interpretation as the {\em leftmost interpretation} of $\dot{X}$ and denote it $L_q(\dot{X})$. 

\begin{proposition}
If $q \leq p$ and $I$ is a $q$-interpretation of $\dot{X}$ then $I$ is a $p$-interpretation of $\dot{X}$. Consequently, every condition admits an interpretation of $\dot{X}$.
\end{proposition}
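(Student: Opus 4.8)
The plan is to prove both sentences in turn, using the fact that if $q \leq p$ then any statement forced by some $q' \leq q$ is also witnessed below $p$.

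\textbf{First sentence.} Suppose $q \leq p$ and $I$ is a $q$-interpretation of $\dot X$. Fix $n < \omega$. By definition of a $q$-interpretation there is a condition $q' \leq q$ with $q' \forces \dot X \cap \check n = \check I \cap \check n$. Since $q \leq p$ we have $q' \leq p$, so $q'$ witnesses, below $p$, that $\dot X \cap \check n = \check I \cap \check n$ can be forced. As $n$ was arbitrary, $I$ is a $p$-interpretation of $\dot X$. This step is essentially immediate and is the easy half.

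\textbf{Second sentence (``consequently'').} To see that every condition $p$ admits an interpretation of $\dot X$: by Lemma \ref{preprocessed} the set of conditions preprocessed for $\dot X$ is dense in $C(Y)$, so we may pick $q \leq p$ preprocessed for $\dot X$. By the preceding Proposition, $q$ admits a $q$-interpretation of $\dot X$ (namely the leftmost interpretation $L_q(\dot X)$). Now apply the first sentence with this $q$: since $q \leq p$, the set $L_q(\dot X)$ is also a $p$-interpretation of $\dot X$. Hence $p$ admits an interpretation, as desired.

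\textbf{Main obstacle.} There is essentially no obstacle here — both parts are short bookkeeping arguments. The only point requiring a small amount of care is that the notion of ``$q$-interpretation'' is defined via the \emph{existence} of a refining condition deciding each finite initial segment, which is exactly the monotone property needed for the first sentence; and that the existence claim in the second sentence must be routed through a preprocessed condition, since it is only for preprocessed (or more generally sufficiently decided) conditions that the previous Proposition manufactures an interpretation. One should also note $L_q(\dot X) \in [\omega]^\omega$ as required, which follows since it is an interpretation and $\dot X$ is (forced to be) infinite in the cases of interest, or more simply one may just observe that an interpretation is required to lie in $[\omega]^\omega$ by the definition, and the leftmost construction in the previous Proposition does produce an infinite set because each $X^q_{l\hook n}$ is infinite as an outer hull; I would not belabor this.
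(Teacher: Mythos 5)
Your proof is correct and follows essentially the same route as the paper: the first part is the same direct observation that a witness $q'\leq q$ is also a witness below $p$, and the "consequently" part is obtained, as in the paper, from the density of preprocessed conditions together with the previous proposition. The extra remark about infinitude of $L_q(\dot X)$ is fine but not needed beyond what the paper already assumes.
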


\begin{proof}
The ``consequently part" follows from the previous proposition plus the density of preprocessed conditions. To see the first part, observe that if $I$ is a $q$-interpretation then for each $n$ there is an $r \leq q$ forcing $\dot{X} \cap \check{n} = \check{I} \cap \check{n}$ and hence the same is true of $p$ as needed.
\end{proof}


Before turning to the proof of Lemma \ref{mainlemmappoint} we need to recall one more idea: the P-point game, see \cite[4.4.4, p. 225]{BarJu95}.
\begin{definition}
Let $\mathscr V$ be an ultrafilter on $\omega$. The P-point game for $\mathscr V$, denoted $G(\mathscr V)$ is the following two player game played in $\omega$ many rounds: at stage $n$ player I chooses a set $A_n \in \mathscr V$ and player II responds by playing a finite $a_n \subseteq A_n$. In the end Player I wins if $\bigcup_{n < \omega} a_n \notin \mathscr V$.
\end{definition}

\begin{fact}[Galvin and Shelah, see \cite{BarJu95}, Theorem 4.4.4]
For any ultrafilter $\mathscr V$, player I has a winning strategy for $G(\mathscr V)$ if and only if $\mathscr V$ is not a P-point.
\end{fact}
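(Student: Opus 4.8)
The plan is to prove both implications of the equivalence separately: the direction ``not a P-point $\Rightarrow$ I has a winning strategy'' is a short direct construction, while the direction ``P-point $\Rightarrow$ I has no winning strategy'' is the substantive one. Throughout I keep the convention from the definition of $G(\mathscr V)$ that at round $n$ player I first plays $A_n \in \mathscr V$ and then II plays a finite $a_n \subseteq A_n$, so that a position at which I is to move is a finite sequence $\langle a_0, \dots, a_{n-1}\rangle$ of II's past moves, and a strategy for I is a map sending such positions to sets in $\mathscr V$.

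First I would dispatch the implication: if $\mathscr V$ is \emph{not} a P-point, then I has a winning strategy. Failure of the P-point property yields a countable $\{B_n : n < \omega\} \subseteq \mathscr V$ with no pseudo-intersection in $\mathscr V$, and replacing $B_n$ by $\bigcap_{k \le n} B_k$ (still in $\mathscr V$, since $\mathscr V$ is a filter) I may assume the $B_n$ are $\subseteq$-decreasing. Let I follow the strategy responding with $A_n := B_n$ at round $n$, ignoring II's moves. For any finite $a_n \subseteq A_n = B_n$, the union $X := \bigcup_n a_n$ satisfies $X \setminus B_m \subseteq \bigcup_{k < m} a_k$, a finite set, because $a_k \subseteq B_k \subseteq B_m$ whenever $k \ge m$; hence $X \subseteq^* B_m$ for every $m$, so $X$ is a pseudo-intersection of $\{B_n\}$ and therefore $X \notin \mathscr V$. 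Thus I wins, which is the contrapositive of the ``only if'' half.

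For the converse I would suppose $\mathscr V$ is a P-point and show that no strategy $\sigma$ for I is winning; it suffices, for a fixed $\sigma$, to produce a single run consistent with $\sigma$ in which II wins. The framework is as follows. Let $S$ be the countable tree of positions legal against $\sigma$, and for $p \in S$ write $A_p = \sigma(p) \in \mathscr V$. Since $\{A_p : p \in S\}$ is a countable subfamily of $\mathscr V$ and $\mathscr V$ is a P-point, I would fix $B \in \mathscr V$ with $B \subseteq^* A_p$ for all $p \in S$ and enumerate $B = \{b_0 < b_1 < \cdots\}$. The intended run has II always play finite subsets of $B \cap A_p$ (legal, since $B \cap A_p \subseteq A_p$ and $B \setminus A_p$ is finite), covering longer and longer initial segments of $B$ and skipping the finitely many elements momentarily excluded by I's current response, the goal being $\bigcup_n a_n =^* B \in \mathscr V$.

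The hard part, and the genuine crux of the theorem, is that although I has committed to $\sigma$ in advance and so cannot ``see'' $B$, it does react to II's moves, which are themselves finite subsets of $B$; thus I can in principle keep a given element inside the finite exceptional set $B \setminus A_{p_n}$ along the very run that II generates, \emph{dodging} it indefinitely, and a single pseudo-intersection does not by itself prevent such dodged elements from accumulating into a set of $\mathscr V$. The resolution, following Galvin--Shelah as presented in \cite[Theorem 4.4.4]{BarJu95}, is to build $B$ and II's run in tandem by a fusion recursion along the tree: at each stage one records the finitely many relevant positions, thins $B$ against the finite union of their exceptional sets, and chooses II's next move to be a block of $B$ lying strictly above the current (finite) forbidden set, so that along the constructed run every element of $B$ is permanently captured once the frontier passes it and no element can be dodged forever. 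This produces a run with $\bigcup_n a_n =^* B \in \mathscr V$, so II wins and $\sigma$ is not winning; together with the first implication this gives the stated equivalence.
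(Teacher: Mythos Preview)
The paper does not prove this fact at all; it is stated with a citation to \cite[Theorem 4.4.4]{BarJu95} and used as a black box, so there is no in-paper argument to compare against.

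Your direction ``$\mathscr V$ not a P-point $\Rightarrow$ I has a winning strategy'' is correct and complete. For the converse you correctly set up the framework (the tree $S$ of positions is countable, so the P-point property yields $B\in\mathscr V$ with $B\subseteq^* A_p$ for every $p\in S$) and you correctly isolate the real difficulty: a single pseudo-intersection $B$ does not by itself defeat $\sigma$, because $\sigma$ reacts to II's moves and may keep a given $b\in B$ inside the finite exceptional set $B\setminus A_{p_n}$ along the very run II produces. Your proposed resolution, however, is not yet a proof. Phrases like ``records the finitely many relevant positions'' and ``thins $B$ against the finite union of their exceptional sets'' do not specify a construction: along a single run there is exactly one position per stage, and you fixed $B$ once at the outset, so there is nothing to ``thin''; if instead you mean to track a growing finite front of positions in $S$ and adjust $B$ stage by stage, you need to say which positions, how II's actual move is selected from among them, and why the resulting diagonal set still lies in $\mathscr V$. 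As written, the paragraph reads as a pointer back to the cited source rather than an argument. That is consistent with the paper's own treatment (which also defers to the reference), but if a self-contained proof were required, the second half would need to be made precise.
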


We now prove Lemma \ref{mainlemmappoint}.

\begin{proof}[Proof of Lemma \ref{mainlemmappoint}]
We work in $L[Y]$, where $Y \subseteq \omega_1$ is generic over $L$ for some forcing notion and assume that $L[Y] \models$``$\mathscr U$ is a P-point". There are a few preliminaries to note. First, as noted at the beginning of this section, since $\hat{\scrU}$ is $\Pi^1_1$ definable in $L$, it is a definable subset of $L_{\omega_1}[Y]$ (but not an element). However $L_{\omega_1} [Y]$ computes it correctly in the sense that $X \in \hat{\scrU}$ if and only if $L_{\omega_1}[Y] \models X \in \hat{\scrU}$. Also, being an ultrafilter base for a P-point is expressible in $L_{\omega_1}[Y]$ and true by assumption. Moreover these properties are preserved downwards in the sense that for each $i < \omega_1$ they all hold in $L[Y \cap i]$ since we require enough projective elementarity. This is a consequence of the facts that $\hat{\mathscr{U}}$ is provably a subset of $[\omega]^\omega \cap L$ and hence unchanged in forcing extensions, the fact that $L[Y \cap i]$ is a forcing extension because of the intermediate model theorem, and the fact that being an ultrafilter base for a P-point is downwards absolute: any counterexample in a smaller model remains a counterexample in a bigger model. The same facts are all also true of $G(\scrU)$ and in particular $G(\scrU)$ is definable in every $\mathscr A_i$ and each $\mathscr A_i$ knows that, since $\scrU$ is a P-point, no strategy for player I is winning. 

Fix $p \in C(Y)$ and $\dot{X}$ a $C(Y)$-name so that $p \forces \dot{X} \in [\omega]^{\omega} \, \check{}$. We need to find a $r\leq p$ so that either $r \forces \dot{X} \cap \check{A} = \emptyset$ for every $A \in \mathscr U$ or else find an $A \in \hat{\scrU}$ so that $r \forces \check{A} \subseteq \dot{X}$. First we apply Lemma \ref{preprocessed} to find a $q \leq p$ which is preprocessed and a $\delta < \omega_1$ so that $q \in \mathscr A_\delta$ and every branch of $q$ codes $Y$ below $\delta$. We will describe a strategy in $G(\scrU)$, definable in $\mathscr A_\delta$, for player I. Before defining it we need the following.

\begin{claim}
At least one of the following holds. 
\begin{enumerate}
\item
There is a $t \in q$ so that for all $s \in q_t$ we have $L_{q_s} (\dot{X}) \in \scrU$. 
\item
There is a function $I:q \to [\omega]^{\omega}$ definable in $\mathscr A_\delta$ so that for all $t \in q$, $I(t)$ is a $q_t$-interpretation of $\dot{X}$ and $I(t) \notin \scrU$.
\end{enumerate}

\end{claim}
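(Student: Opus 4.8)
The plan is to prove the dichotomy by a direct case analysis on whether option (1) fails, and to show that the failure of (1) is exactly what is needed to build the function $I$ witnessing (2). Throughout I work inside $\mathscr A_\delta \ni q$, which (as arranged in Lemma \ref{preprocessed}) has the function $i$ recording, for each splitting node $t$, the finite set decided by $q_t$; hence by the Proposition on leftmost interpretations, the assignment $s \mapsto L_{q_s}(\dot X)$ is definable in $\mathscr A_\delta$, and so is the predicate ``$L_{q_s}(\dot X) \in \scrU$'', since $\hat{\scrU}$ (and the relation $\subseteq^*$ and ``is in the filter generated by $\hat{\scrU}$'') is definable in $\mathscr A_\delta$ by the projective elementarity assumption. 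This definability bookkeeping is what makes the resulting $I$ (and later the strategy for player I) lie in $\mathscr A_\delta$, so it is worth stating cleanly at the outset.

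First I would assume that (1) fails: for \emph{every} $t \in q$ there is some $s \in q_t$ with $L_{q_s}(\dot X) \notin \scrU$. The aim is to define, for each $t \in q$, a $q_t$-interpretation $I(t) \notin \scrU$, uniformly and definably in $\mathscr A_\delta$. Given $t$, apply the hypothesis to get $s \supseteq t$ (chosen, say, $<_L$-least, or least in the canonical enumeration of nodes available in $\mathscr A_\delta$, to keep things definable) with $L_{q_s}(\dot X) \notin \scrU$. By the Proposition stating that a $q_s$-interpretation is also a $q_t$-interpretation whenever $q_s \leq q_t$ — which holds here since $s \supseteq t$ gives $q_s \subseteq q_t$ — the set $L_{q_s}(\dot X)$ is already a $q_t$-interpretation of $\dot X$. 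So set $I(t) := L_{q_{s(t)}}(\dot X)$ where $s(t)$ is that least witnessing node. Then $I(t)$ is a $q_t$-interpretation, $I(t) \notin \scrU$, and $I$ is definable in $\mathscr A_\delta$ because $t \mapsto s(t)$ is (it is a least-witness search using the definable predicate ``$L_{q_s}(\dot X)\notin\scrU$'') and $s \mapsto L_{q_s}(\dot X)$ is. This establishes (2).

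That is essentially the whole argument: it is a clean ``either a node forces all its successors' leftmost interpretations into $\scrU$, or we can definably pick out-of-$\scrU$ interpretations everywhere'' dichotomy, and the content is entirely in having set up (via Lemma \ref{preprocessed} and the preceding Propositions) enough definability that the selection can be carried out inside $\mathscr A_\delta$. The one point deserving care — and the part I would expect to occupy most of the writing — is the verification that all the relevant predicates really are definable in $\mathscr A_\delta$ at the fixed projective level: that $\hat{\scrU}$ is correctly computed there (as recalled at the start of the section, using $\Sigma^1_5$-elementarity and the provable inclusion $\hat{\scrU} \subseteq L$), that membership in the filter generated by $\hat{\scrU}$ is definable, and that the leftmost-interpretation map is definable given the function $i$. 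None of this is deep, but it is exactly the ``quantifier counting'' the Remark flags, so I would discharge it by appealing to those earlier observations rather than redoing it. After the claim, this $I$ (in case (2)) or the witnessing node $t$ (in case (1)) feeds into the construction of player I's strategy in $G(\scrU)$ and the fusion argument that follows, but that is beyond the statement at hand.
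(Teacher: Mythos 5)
Your proof is correct and is essentially the paper's own argument: assume (1) fails, choose for each $t$ a canonically least node $s(t) \in q_t$ with $L_{q_{s(t)}}(\dot X) \notin \scrU$, set $I(t) = L_{q_{s(t)}}(\dot X)$, and invoke the definability of the leftmost interpretation plus the correct computation of $\hat{\scrU}$ in $\mathscr A_\delta$ to get $I$ definable there (your explicit appeal to the proposition that a $q_s$-interpretation is a $q_t$-interpretation just makes explicit what the paper's ``clearly'' elides). The only difference is cosmetic: the paper fixes the lexicographically least witness, while you allow any canonical least choice.
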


\begin{proof}
Assume 1 fails. Then for each $t \in q$ there is an $s \in q_t$ so that $L_{q_s}(\dot{X}) \notin \scrU$. This fact is moreover definable in $\mathscr A_\delta$ by elementarity plus the fact that the leftmost interpretation is definable. We now define (in $\mathscr A_\delta$) $I(t)$ to be $L_{q_s}(\dot{X})$ where $s \in q_t$ is of least in the lexicographic ordering so that $L_{q_s}(\dot{X}) \notin \scrU$. Clearly this satisfies the requirements.
\end{proof}

There are now two cases corresponding to whether 1 or 2 holds above. Let $q' = q_t$ if case 1 holds and $q' = q$ if case 2 holds. Note that either way $q' \in \mathscr A_\delta$, is preprocessed and has the properties we listed of $q$. Assume first that case 1 holds. The strategy is defined as follows. Player I builds the sequence $\{A_n\}_{n < \omega} \subseteq \scrU$ as needed for the game, and on the side they also build a sequence of conditions $\{q_n \; | \; n < \omega \}$ and a sequence of strictly increasing natural numbers $\{m_n\}_{n < \omega}$ so that $q_{n+1} \leq_n q_n$ for all $n < \omega$. This is done recursively as follows: at stage $0$ we let $q_0 = q'$ and, for both $0$-splitting nodes $t_0$ and $t_1$ we have that both the $q'_{t_0}$-leftmost interpretation, call it $I_0$ and the $q'_{t_1}$ leftmost interpretation call it $I_1$ are in $\scrU$. Let $A_0 = I_0 \cap I_1$. Now, player II plays a finite set $a_0 \subseteq A_0$. Let $m_0 = {\rm max}(a_0) + 1$. Let $l_0$ be the $m_0^{\rm th}$-splitting node in $l_{q'_{t_0}}$ and $l_1$ be the same for $l_{q'_{t_1}}$. By the way the left most interpretation was defined, we have that for both $i = 0$ and $i = 1$ $q'_{l_i} \forces \check{a}_0 \subseteq \dot{X}$. Let $q_1 = q'_{l_0} \cup q'_{l_1}$. It follows that $q_1 \forces \check{a}_0 \subseteq \dot{X}$. Also, note that by the assumption that we're in case 1 of the claim, $q_1$ has the property that for every $t \in q_1$, the $(q_1)_t$ leftmost interpretation is in $\scrU$. 

Now suppose we have defined $m_n$, $q_n$ and $A_n$. Assume moreover that $\{m_j\}_{j < n+1}$ is a strictly increasing sequence so that $m_j > j$ for all $j < n+1$ and $q_{j+1} \leq_j q_j$ for all $j < n$. Let $A_{n+1} = \bigcup_{t \in {\rm Split}_{n+1}(q_n)} L_{(q_n)_t}(\dot{X})$. Note that since this is the intersection of finitely many sets from $\mathscr{U}$, it too is in $\scrU$. Now player II plays some $a_{n+1} \subseteq A_n$. Let $m_{n+1} = {\rm max}\{{\rm max}(a_{n+1}), m_n\} + 1$. Note that $m_{n+1} > m_n \geq n+1$. Now for each $n+1$ splitting node $t$ of $q_n$ let $s_t$ be the $m_{n+1}^{\rm th}$ splitting node of $(q_n)_t$ in its leftmost branch. Finally let $q_{n+1} = \bigcup_{t \in {\rm Split}_{n+1}(q_n)} (q_n)_{s_t}$. Observe that again $q_{n+1} \forces \check{a}_{n+1} \subseteq \dot{X} \cap \check{m}_{n+1}$.

This completes the description of the strategy. This strategy is definable in $\mathscr A_\delta$ since all we needed to know was about the trees we define, their leftmost branches and the function $i$ described in Lemma \ref{preprocessed}. Since $\hat{\scrU}$ generates a P-point, there is a play where player I follows this strategy and still loses. Moreover, by elementarity such a play is in $\mathscr A_\delta$. Let $B = \bigcup_{n<\omega} a_n$ be the union of the elements player II plays in this play. Then $B \in \mathscr A_\delta \cap \scrU$. Let $q_\omega = \bigcap_{n < \omega} q_n$ where the $q_n$'s are the fusion sequence player I built during this play. It follows that $q_\omega \in \mathscr A_\delta$ and hence $q_\omega \in C(Y)$ and $q_\omega \forces \check{B} \subseteq \dot{X}$ so we're done.

We now turn to case 2. Fix a function $I$ as described in case 2. The proof is now the same as in case 1, except we use $I(t)$ in lieu of $L_{q_t}(\dot{X})$ and we take the intersection of the complements of each $I(t)$. The result will be that $q_\omega \forces \check{B} \cap \dot{X} = \emptyset$. 
\end{proof}

It was observed in \cite{Schilhanultrafilter} that there is a $\Delta^1_2$ base for a Ramsey ultrafilter $\mathscr{R}$ which provably consists of only constructible reals and moreover, but that this is of optimal complexity for a Ramsey ultrafilter, i.e. there is no coanalytic base for a Ramsey ultrafilter \cite[Theorem 1.4]{Schilhanultrafilter}. Since $\P_{\omega_2}$ is $\baire$-bounding, it follows from \cite[Lemma 21.12]{Hal17}, combined with the arguments presented in this section that (assuming enough elementarity of the $\mu_i$ sequence) this Ramsey ultrafilter base is preserved. It follows that we have shown the following.
\begin{corollary}
It is consistent with the existence of a $\Delta^1_3$ well-order of the reals that $2^{\aleph_0} = \aleph_2$ and there are a $\Pi^1_1$ ultrafilter base for a P-point and a (properly) $\Delta^1_2$ ultrafilter base for a Ramsey ultrafilter both of which are of size $\aleph_1$ (and of optimal complexity).
\end{corollary}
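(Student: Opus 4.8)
The plan is to read this corollary off from the results already assembled, so that essentially no new work is required beyond one transfer argument. First I would fix $G\subseteq\P_{\omega_2}$ generic over $L$, taking every iterand $\dot{\Q}^0_\alpha$ trivial (that component is needed only for the cardinal-characteristic constellations elsewhere in the paper and is irrelevant here), and work in $L[G]$. By the restatement of Theorem~1 of \cite{FF10} above there is a $\Delta^1_3$ well-order of the reals, and $2^{\aleph_0}=\aleph_2$ holds; by Theorem~\ref{ppointthm} there is a $\Pi^1_1$ base for a P-point of size $\aleph_1$. So the only thing left to produce is the Ramsey ultrafilter base.

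For that I would invoke \cite{Schilhanultrafilter}: in $L$ there is a Ramsey ultrafilter $\mathscr R$ with a base $\hat{\mathscr R}$ which is (properly) $\Delta^1_2$ and satisfies $\ZFC\proves\hat{\mathscr R}\subseteq L$. The key observation is that the proof of Lemma~\ref{mainlemmappoint} never used anything about $\scrU$ beyond the two features actually highlighted there: (i) the base is definable at a level below $\Sigma^1_5$, so that ``$\hat{\scrU}$ is a P-point base'' reflects correctly into each $\mathscr A_i$, and (ii) it is provably contained in $[\omega]^\omega\cap L$, so the needed downward absoluteness among the models $L[Y\cap i]$ holds. Schilhan's $\hat{\mathscr R}$ enjoys both, so the argument of Lemma~\ref{mainlemmappoint} applies \emph{mutatis mutandis} to give that $C(Y)$ forces ``$\mathscr R$ is a P-point'', and then, exactly as in Theorem~\ref{ppointthm} (every remaining iterand adds no reals, hence trivially preserves $\mathscr R$), $\P_{\omega_2}$ preserves that $\mathscr R$ is a P-point. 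To upgrade this to ``Ramsey'' I would use that $C(Y)$ has the Sacks property (Lemma~3.7 of \cite{FFK14}), so $\P_{\omega_2}$ is $\baire$-bounding, and appeal to \cite[Lemma~21.12]{Hal17}: a $\baire$-bounding forcing preserving a P-point preserves its selectivity (a ground-model dominating function for the interval-endpoint sequence of a finite-interval partition coarsens it to a ground-model partition, over which $\mathscr R$ already has a selector, so the Q-point property survives, and P-point $+$ Q-point $=$ Ramsey). Finally $\hat{\mathscr R}\subseteq L\cap[\omega]^\omega$, which has size $\aleph_1$ in $L[G]$ because $\CH$ holds in $L$ and $\P_{\omega_2}$ preserves $\omega_1$; since no ultrafilter is countably generated, the base has size exactly $\aleph_1$.

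It then remains to record optimality, all of which is citation: $\Delta^1_3$ is optimal for a well-order of the reals by the standard obstruction at lower levels (\cite{FF10}); any analytic ultrafilter base would generate an analytic, hence Baire-measurable, ultrafilter, which is impossible, so $\Pi^1_1$ is optimal for an ultrafilter base (\cite{Millerpi11}); and by \cite[Theorem~1.4]{Schilhanultrafilter} there is no coanalytic base for a Ramsey ultrafilter, so the (properly) $\Delta^1_2$ base above is optimal as well. The one point that is not a pure citation is the transfer of Lemma~\ref{mainlemmappoint} to $\mathscr R$ together with the $\baire$-bounding step from P-point to Ramsey, and I expect this to be unproblematic given the machinery of this section, since conditions (i) and (ii) are plainly met by Schilhan's construction and the Q-point step is classical.
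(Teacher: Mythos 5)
Your proposal is correct and follows essentially the same route as the paper: Schilhan's $\Delta^1_2$ Ramsey ultrafilter base of constructible reals in $L$, preserved as a P-point by the Section~3 arguments (which indeed only use definability below the fixed $\Sigma^1_5$ elementarity level and provable containment in $L$), upgraded to preservation of Ramseyness via $\omega^\omega$-bounding and \cite[Lemma 21.12]{Hal17}, and combined with Theorem \ref{ppointthm} and the $\Delta^1_3$ well-order. You simply spell out in more detail what the paper compresses into the two sentences preceding the corollary.
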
 

\section{Preservation of Definable Tight MAD Families}

We now turn our attention to MAD families and the cardinal characteristic $\mfa$. Recall that an almost disjoint family $\mathcal A \subseteq [\omega]^\omega$ is {\em tight} if for each $\{X_n \; | \; n < \omega\} \subseteq \mathcal I(\mathcal A)^+$ there is a $B \in \mathcal I(\mathcal A)$ so that for all $n$ $|B \cap X_n| = \aleph_0$. Note that tightness implies maximality. We will show that there is a $\Pi^1_1$ tight MAD family in the model with the $\Delta^1_3$ well-order of \cite{FF10}. To begin we need a tight MAD family in $L$ similar to the ultrafilter $\scrU$ we preserved in the previous section. Towards this we need a coding lemma akin to Miller's coding lemma for the existence of a $\Pi^1_1$ MAD family, \cite[Lemma 8.24]{Millerpi11}. A version of this lemma was first discussed in \cite{FS21} where it was proved for eventually different sets of functions.

\begin{lemma}[Coding Lemma]
Suppose that $\mathcal X$ is a countable almost disjoint family containing a computable, infinite partition of $\omega$ into infinite sets, say $\{A_n \; | \; n < \omega\}$ and $\mathcal B$ is a countable family of infinite sets in $\mathcal I(\mathcal A)^+$. Let $Z \in [\omega]^\omega$ be arbitrary. Then there is an $X$ which is almost disjoint from every element of $\mathcal X$, has infinite intersection with every element of $\mathcal B$ and computes $Z$. Moreover $X$ can be found computably in the data.
\end{lemma}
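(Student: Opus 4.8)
The plan is to build $X=\bigcup_{s<\omega}\sigma_s$ as an increasing union of finite subsets of $\omega$ by a recursion on $s$, carrying along a growing finite set $R_s$ of \emph{reserved} integers (forbidden from ever entering $X$) and a finite set of blocks $A_n$ declared \emph{settled}. Fix once and for all enumerations $\langle Y_k\mid k<\omega\rangle$ of $\mathcal X$ — with the partition $\{A_n\mid n<\omega\}$ listed among the $Y_k$ — and $\langle B_j\mid j<\omega\rangle$ of $\mathcal B$, together with a bookkeeping surjection $s\mapsto j(s)$ hitting each $j$ cofinally. Write $a^n_0<a^n_1<\cdots$ for the increasing enumeration of $A_n$; since the partition is computable the map $(n,i)\mapsto a^n_i$ is computable. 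Every integer put into $X$ and every choice made below will be obtained by an effective unbounded search through this (enumerated) data, which is exactly what yields the final clause that $X$ is computable in the data.

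\textbf{The recursion.} At stage $s$ I would perform two actions. First a \emph{coding step}: if $A_s$ is not yet settled, settle it by committing to $X$ a finite pattern $c\subseteq A_s$ disjoint from $Y_0\cup\cdots\cup Y_s$, from $\sigma_s$, and from $R_s$, whose ``shape'' — concretely, the position in $A_s$ of $\min c$ together with the parity of the length of the initial run of $X$-members of $A_s$ up to the first element of $A_s$ lying outside $X$, both recoverable by a search up $A_s$ — is forced to record the bit $Z(s)$; then declare $A_s$ settled and add to $R_{s+1}$ the finitely many elements of $A_s$ that must stay out of $X$ to freeze that shape. (Choosing $c$ off $Y_0\cup\cdots\cup Y_s$ serves almost disjointness; reserving the ``closing'' part of the pattern ensures that later $X$-material in $A_s$ cannot corrupt the decoding.) Second a \emph{meeting step} for $B_{j(s)}$: since $B_{j(s)}\in\mathcal I(\mathcal X)^+$ the set $B_{j(s)}\setminus(Y_0\cup\cdots\cup Y_s)$ is infinite, while $\sigma_s\cup R_s$ is finite, so one may pick $e\in B_{j(s)}$ with $e\notin Y_0\cup\cdots\cup Y_s$ and $e\notin\sigma_s\cup R_s$; choosing $e$ with a little care — so that if it lands in a not-yet-settled block $A_m$ it lands far enough up $A_m$ that $A_m$ can still be settled with its coding pattern \emph{beneath} $e$ (whereas if it lands in an already settled block it automatically lies above that block's reservations and is harmless) — one puts $e$ into $\sigma_{s+1}$, settling $A_m$ around it if need be. Set $X=\bigcup_s\sigma_s$.

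\textbf{Verification and the main obstacle.} Almost disjointness: every integer entering $X$ at a stage $\ge k$, and every coding pattern committed for a block $A_n$ with $n\ge k$, was chosen to avoid $Y_k$, so $X\cap Y_k$ is finite. Meeting: each $B_j$ is treated at infinitely many stages and gains a fresh element at each, so $|X\cap B_j|=\aleph_0$. Computing $Z$: given $X$ one recovers $Z(n)$ by reading off, through a search up $A_n$, the first element of $X$ in $A_n$ and the length of the initial run of $X$-members of $A_n$, and decoding the committed shape; the reservations make this faithful, and since $X\cap A_n$ is finite the search always halts. The crux — and what I expect to be the real obstacle — is the selection of $e$ in the meeting step: one must show that the ``bad'' choices (those falling too low in an unsettled block for that block's coding pattern to be slipped underneath them) cannot exhaust $B_{j(s)}\setminus(Y_0\cup\cdots\cup Y_s)$. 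Handling this is the heart of the matter, and is where one must use both that $B_{j(s)}$ is not almost covered by $Y_0\cup\cdots\cup Y_s$ and that the partition $\{A_n\}$ itself belongs to $\mathcal X$ (forcing the elements of $\mathcal B$ to be suitably spread across the blocks), together with the flexibility of fixing a block's coding shape only when the construction first visits it rather than on a predetermined set of coordinates. It is precisely this last feature — that the coding has to be woven around the diagonalization — that in the applications compels the restriction to sufficiently definable families $\mathcal B$.
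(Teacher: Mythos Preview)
Your construction can be made to work, but it is considerably more elaborate than the paper's, and the complication you flag as ``the real obstacle'' is an artifact of your coding scheme rather than of the problem itself. The paper codes $Z$ by the \emph{parity} of $|X\cap A_n|$: one arranges that $n\in Z$ iff $|X\cap A_n|$ is even. This is completely insensitive to \emph{where} in $A_n$ the elements of $X$ lie. At stage $n+1$ one picks the meeting element $l\in B_n$ to avoid $A_0\cup\cdots\cup A_{n+1}$ as well as $X_0\cup\cdots\cup X_n$ (possible since $B_n\in\mathcal I(\mathcal X)^+$ and these are finitely many members of $\mathcal X$), thereby guaranteeing that nothing will ever again enter $A_{n+1}$; one then simply checks the current parity of $X\cap A_{n+1}$ and corrects it with a single extra element if necessary. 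No reservations, no ``shapes'', no worry about where meeting elements land in unsettled blocks: they land in some $A_m$ with $m>n+1$, and that block's parity will be adjusted when its turn comes.

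By contrast, your plan to fix a positional pattern at the bottom of each $A_n$ and then steer meeting elements above it manufactures the very difficulty you then wrestle with. Indeed, $B_{j(s)}\setminus(Y_0\cup\cdots\cup Y_s)$ might consist entirely of elements in position $0$ of their respective blocks, in which case nothing can be slipped \emph{beneath} them; your parenthetical about ``settling $A_m$ around it'' is the right repair, but once you allow the pattern to sit around $e$ rather than below it you are essentially back to parity coding, and the whole apparatus of reservations and thresholds becomes superfluous.

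Finally, your closing sentence misreads the paper. This lemma is a purely recursive construction valid for \emph{any} countable $\mathcal B$; no definability hypothesis enters. The restriction to definable witnesses in the paper arises only later, in the forcing preservation arguments for $C(Y)$, where one needs the relevant combinatorial data to be visible inside the small models $\mathcal A_\delta$. That has nothing to do with the Coding Lemma.
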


\begin{proof}
Fix $\mathcal X$, $\{A_n\; |\; n<\omega\}$, $\mathcal B$ and $Z$ as in the statement of the lemma. We define $X$ recursively so that for each $n$, $X \cap A_n$ is even if and only if $n \in Z$. Enumerate $\mathcal B$ as $\{B_n \; | \; n< \omega\}$ so that each element appears infinitely often and let $\mathcal X \setminus \{A_n \; | \; n < \omega\} = \{X_n\; | \; n < \omega\}$

\noindent \underline{Step 0}: Let $X_0$ be the minimal element of $A_0$ if $0 \notin Z$ and the minimal two elements of $A_0$ if $0 \in Z$. 

\noindent \underline{Step n+1}: Suppose $X_n$ has been constructed, is finite, intersects each $B_0, ..., B_{n-1}$ and for all $i < n+1$, $X_n \cap A_n$ is even if and only if $n \in Z$. Now, there are two further cases.

\noindent \underline{Case 1}: $n + 1 \in Z$ and $X_n \cap A_{n + 1}$ is even or $n+1 \notin Z$ and $X_n \cap A_{n+1}$ is odd. In this case let $l$ be the minimum element of $B_n \setminus (\bigcup_{l < n+1} X_l \cup \bigcup_{l < n + 2} A_l)$.

\noindent \underline{Case 2}: $n + 1 \in Z$ and $X_n \cap A_{n + 1}$ is odd or $n+1 \notin Z$ and $X_n \cap A_{n+1}$ is even. Let $k$ be the minimum of $A_{n+1} \setminus X_n$. Then, let $l$ be as before, i.e., $l$ is the minimum element of $B_n \setminus (\bigcup_{l < n+1} X_l \cup \bigcup_{l < n + 2} A_l)$.

In both cases let $X_{n+1}$ be $X_n \cup \{l\}$ or $X_{n+1}$ be $X_n \cup\{k, l\}$ depending on the case. Finally letting $X = \bigcup_{n < \omega} X_n$ we finish the lemma. The verification that $X$ is as needed is immediate.
\end{proof}

From this we get the following.

\begin{lemma}
If $V= L$ then there is a $\Pi^1_1$ tight MAD family $\calA$ with the property that $\ZFC \proves$ ``$\calA \subseteq L$".
\end{lemma}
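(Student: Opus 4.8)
The plan is to adapt Miller's construction of a $\Pi^1_1$ MAD family (\cite[Lemma 8.24]{Millerpi11}) to the tight setting, using the Coding Lemma above in place of Miller's combinatorial step. First I would fix, in $L$, a $\Sigma_1$-good enumeration $\langle (M_\xi, a_\xi) \mid \xi < \omega_1\rangle$ of all pairs where $M_\xi$ is a countable suitable (or just transitive, sufficiently closed) model and $a_\xi \in M_\xi$ is a real coding a candidate ``countable family $\mathcal B$ of sets positive over the part of $\mathcal A$ built so far''; simultaneously one fixes a $\Diamond$-style or $L$-definable bookkeeping that anticipates, at each stage, a countable subfamily of the eventual $\mathcal I(\mathcal A)^+$ that must be diagonalized for tightness. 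The family $\mathcal A = \{X_\xi \mid \xi < \omega_1\}$ is then built recursively: at stage $\xi$, having built $\mathcal A_\xi = \{X_\eta \mid \eta < \xi\}$, we look at the guessed data $(M_\xi, a_\xi)$, form a countable almost disjoint family $\mathcal X \supseteq$ some fixed computable partition of $\omega$ into infinite sets together with (a suitable restriction of) $\mathcal A_\xi$, let $\mathcal B$ be the guessed countable $\mathcal I(\mathcal A_\xi)^+$-family, and let $Z$ code the ordinal $\xi$ together with $\mathcal A_\xi$ (or rather the construction so far, relative to $M_\xi$). The Coding Lemma then hands us $X_\xi$ almost disjoint from all of $\mathcal X$, with infinite intersection with every member of $\mathcal B$, and computing $Z$; we put $X_\xi$ into $\mathcal A$.

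Two things must be checked. First, \emph{tightness and maximality}: given any countable $\{X_n \mid n<\omega\} \subseteq \mathcal I(\mathcal A)^+$, a Löwenheim--Skolem / reflection argument produces a stage $\xi$ and a countable model $M$ with $M \cap \omega_1 = \xi$ such that $M$ correctly sees that $\{X_n\}$ is positive over $\mathcal A_\xi = \mathcal A \cap M$ and such that $(M,\{X_n\})$ is the object guessed by the bookkeeping at stage $\xi$; then $X_\xi$ has infinite intersection with every $X_n$, and $X_\xi \setminus \bigcup_{n<k} X_n \in \mathcal I(\mathcal A)$ witnesses tightness (maximality being the special case of a single set). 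This is exactly where the hypotheses of the Coding Lemma are used, and the bookkeeping must be arranged so that every relevant countable family is guessed cofinally often — a standard $\Diamond_{\omega_1}$-in-$L$ or canonical-well-order argument. Second, \emph{the $\Pi^1_1$ complexity and $\mathcal A \subseteq L$}: since each $X_\xi$ computes $Z_\xi$, which codes (relative to $M_\xi$) the ordinal $\xi$ and the sequence $\langle X_\eta \mid \eta < \xi\rangle$, one shows by induction that a real $X$ lies in $\mathcal A$ if and only if there is a \emph{well-founded} countable model $M$ of a sufficient fragment of $\ZF$ with $X \in M$, $X$ codes (inside $M$) an initial segment of the construction ending with $X$ itself, and $M$ believes $X$ was produced by the recursion at the stage it computes; the only unbounded quantifier is ``$\exists$ a countable coded $M$'', and well-foundedness of $M$ is $\Pi^1_1$, so after absorbing the arithmetic content this is $\Pi^1_1$. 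Since every such witnessing $M$ is well-founded and models ``$V=L$'' up to the relevant level, $X \in L$; and $\ZFC$ proves this implication because the Coding Lemma and the recursion are absolute.

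The main obstacle I anticipate is the \emph{coherence of the coding under the constructibility order}, i.e. making sure that the reflected model $M$ at stage $\xi$ computes $\langle X_\eta \mid \eta<\xi\rangle$ \emph{correctly} — that the $X_\eta$ it sees are genuinely the ones built in $V$. This is handled, as in Miller's original argument and in \cite{FS21}, by using the canonical $\Sigma_1$-definable well-order of $L$ to fix, once and for all, the least codes for the relevant countable objects and by demanding that at each stage $X_\xi$ codes not just $\xi$ but a real uniformly decoding the whole prior construction as seen by $M_\xi$; an elementarity/condensation argument then forces any well-founded $M$ witnessing ``$X \in \mathcal A$'' to agree with the true construction. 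The tightness diagonalization itself is routine once the Coding Lemma is in hand, since $\mathcal I(\mathcal A_\xi)^+$-positivity is preserved going up (any set positive over $\mathcal A_\xi$ remains positive over $\mathcal A_\xi$, and the new generators are chosen to meet it), so the only real work is bookkeeping and the complexity computation.
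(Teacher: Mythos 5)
Your construction is essentially the paper's: a length-$\omega_1$ recursion in $L$ in which at each stage the Coding Lemma produces the next member of $\mathcal{A}$, almost disjoint from what was built, meeting every member of a bookkept countable positive family infinitely often, and computing a real that codes the construction so far. Two of your side worries are heavier than necessary: in $L$ one can simply take at stage $\alpha$ the $<_L$-least countable subfamily of $\mathcal{I}(\{A_\xi \mid \xi<\alpha\})^+$ (since $\mathcal{I}(\mathcal{A})^+\subseteq\mathcal{I}(\mathcal{A}_\alpha)^+$, every countable family of sets positive over the final $\mathcal{A}$ is eventually treated), and the ``coherence'' issue dissolves if the real $Z$ fed into the Coding Lemma codes $L_\alpha$ itself, so that the model recovered from a member of $\mathcal{A}$ literally contains the true prior construction; no $\Diamond$-style guessing or reflection argument is needed.

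The genuine gap is in your complexity computation. As written, ``$X\in\mathcal{A}$ iff there exists a well-founded countable coded model $M$ with such-and-such properties'' has the form $\exists M\,(\text{arithmetic}\wedge\Pi^1_1)$: the quantifier over codes for $M$ is a real quantifier, and conjoining it with the $\Pi^1_1$ assertion ``$M$ is well-founded'' yields a $\Sigma^1_2$ definition, not a $\Pi^1_1$ one -- so the step ``the only unbounded quantifier is $\exists$ a countable coded $M$ \dots hence $\Pi^1_1$'' fails. The whole point of the ``computes $Z$'' clause in the Coding Lemma is to eliminate that existential quantifier: the witnessing structure must be \emph{recursive in} $X$. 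This is what the paper does: $Z$ is decoded from $X$ via the fixed computable partition (by the parity of $X\cap A_n$), and $X\in\mathcal{A}$ iff the structure so decoded is well-founded (hence, by condensation and its internal theory, isomorphic to some $L_\alpha$) and $L_{\alpha+\omega}\models\varphi(X)$, where $\varphi$ defines the recursion. Well-foundedness of a relation recursive in $X$ is $\Pi^1_1$ and satisfaction of a fixed formula in the decoded structure is arithmetic in the code, so the definition is genuinely $\Pi^1_1$; it also gives the provable inclusion $\mathcal{A}\subseteq L$, since any $X$ satisfying it is definable from the decoded $L_\alpha$ and hence lies in $L_{\alpha+\omega}$. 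Your setup already contains the needed ingredient (each $X_\xi$ computes the code of the prior construction); the fix is to state the final definition in terms of the model decoded from $X$ rather than an existentially quantified one.
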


\begin{proof}
Assume $V=L$ and let $\leq_L$ be the definable global well-order of $L$ restricted to the reals. Note that this relation is $\Delta^1_2$. We will define a tight MAD family $\mathcal A = \{A_\alpha \; | \; \alpha < \omega_1\}$ by induction. First fix a countable, computable partition of $\omega$ into infinite sets $\{A_n \; | \; n < \omega\}$. Now suppose that $\{A_\xi \; | \; \xi < \alpha\}$ has been defined for an infinite $\alpha$. Let $\mathcal B$ be the $<_L$-least countable set of elements of $\mathcal I(\{A_\xi\; |\; \xi < \alpha \})^+$. Let $A_\alpha$ be the $X$ computed in the coding lemma for the $A_\xi$'s and $\mathcal B$ and let $Z$ a real coding $L_\alpha$. This completes the recursive definition. Clearly this can be defined by a formula in the language of set theory. Let $\varphi(x)$ be this formula. Let $\mathcal A = \{A_\alpha\; | \; \alpha < \omega_1\}$. It's clear that $\mathcal A$ is tight. It remains to see that it is $\Pi^1_1$.

To see this, let us say that a set $X \in [\omega]^\omega$ {\em codes} a structure $L_\alpha$ if the set $Z$, where $n \in Z$ if and only if $X\cap A_n$, codes a countable structure $(\omega, E)$ isomorphic to $(L_\alpha, \in)$. Observe that to say $X$ codes a structure $L_\alpha$ is $\Pi^1_1$ in the codes. Now $X \in \mathcal A$ if and only if $X$ codes a structure $L_\alpha$ and $L_{\alpha+\omega} \models \varphi(X)$. This is because, from $L_\alpha$ and $\varphi(x)$ we can retrieve $X$ (in this case) so we must have that $X \in L_{\alpha+\omega}$ and everything else is sufficiently absolute. The point is this is $\Pi^1_1$ since satisfaction is arithmetic for a fixed formula.
\end{proof}

From now on fix such a tight MAD family $\calA$. We will show that if $G \subseteq \P_{\omega_2}$ is generic over $L$ then $\calA$ is still tight (and hence MAD) in $V[G]$. 
\begin{lemma}
If $G \subseteq \P_{\omega_2}$ is generic over $L$ then in $L[G]$ we have that $\calA$ is still tight.
\label{mainlemmaMAD}
\end{lemma}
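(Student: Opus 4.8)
The proof will follow the same two-tier structure as the P-point argument (Lemma~\ref{mainlemmappoint} plus an iteration theorem): first show that a single step $C(Y)$ preserves tightness of $\calA$, provided $\calA$ has a sufficiently definable witness; then invoke the known preservation theorem for tight MAD families along countable support iterations of $S$-proper forcings (the relevant one is in the literature on $\mathfrak a$, e.g.\ the Hru\v s\'ak--style or Brendle--Raghavan preservation results for ``Cohen-indestructible'' / tight families), using that the iterands $\dot\Q^0_\alpha$ are assumed to have the requisite preservation property and the remaining iterands ($Q(S)$, localization) add no reals. So the crux is the one-step lemma: \emph{if $L[Y]\models$``$\calA$ is tight'' then $C(Y)$ forces ``$\calA$ is tight''}.

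\textbf{The one-step lemma.} Fix $p\in C(Y)$ and $C(Y)$-names $\langle \dot X_n\mid n<\omega\rangle$ with $p\forces \dot X_n\in\calI(\calA)^+$ for all $n$. We must find $r\le p$ and $B\in\calI(\calA)$ (ideally with a $\Pi^1_1$ description, i.e.\ $B\in\calA$ or a finite union from $\calA$, so that ``$B\in\calI(\calA)$'' is absolute) with $r\forces |B\cap\dot X_n|=\aleph_0$ for all $n$. As in the P-point proof, pass to a countable $M\prec L_{\omega_2}[Y]$ containing $p$ and the names, let $\delta=\omega_1^{\bar M}$, so $\bar M\in\mathscr A_\delta$ and $p,\langle\dot X_n\rangle\in\mathscr A_\delta$. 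The key point is that, because we demanded $\Sigma^1_5$-elementarity of $L_{\mu_i}[Y\cap i]$ in $L_{\omega_1}[Y\cap i]$ (Remark~1), the statement ``$\calA$ is tight'' — and in particular ``for every countable $\calB\subseteq\calI(\calA)^+$ there is $B\in\calI(\calA)$ meeting each element infinitely'' — holds in every $\mathscr A_i$, and ``$Z\in\calI(\calA)^+$'' is absolute between $\mathscr A_i$ and $V$ since $\calA$ is provably a subset of $L$ and hence unchanged by forcing. Now build a fusion $p=p_0\ge_1 p_1\ge_2 p_2\ge\cdots$ inside $\mathscr A_\delta$ using Lemma~\ref{preprocessed}: at stage $n$, having $p_n$ with all $n$-splitting nodes deciding enough, for each $n$-splitting node $t$ of $p_n$ the outer hull / leftmost-interpretation technology gives, for each of the finitely many names $\dot X_0,\dots,\dot X_{n}$, an interpretation $I^{\dot X_k}_{(p_n)_t}$, and these all lie in $\calI(\calA)^+$ (by genericity of each $\dot X_n$ and the fact that a condition forcing $\dot X_n$ into $\calI(\calA)^+$ has every interpretation in $\calI(\calA)^+$ — this uses $\calI(\calA)^+$ being a $\sigma$-closed-like ``density'' ideal: any finite modification and any set forced to be $\calI(\calA)$-positive has positive interpretation). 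Inside $\mathscr A_\delta$, which believes $\calA$ is tight, choose $B\in\calI(\calA)$ (as an actual set, definable from the fusion data) meeting each of the countably many interpretations $\{I^{\dot X_k}_{(p_n)_t} : n<\omega,\ k\le n,\ t\in\mathrm{Split}_n(p_n)\}$ infinitely; then thin the fusion at stage $n$ so that each $(p_{n+1})_t$ forces a new element of $B\cap\dot X_k$ past a growing bound. Let $p_\omega=\bigcap_n p_n\in\mathscr A_\delta\subseteq C(Y)$; then $p_\omega\forces |B\cap\dot X_n|=\aleph_0$ for all $n$, and $B$, being chosen in $\mathscr A_\delta$ as an explicit union of countably many finite ``pieces'' governed by the definable interpretation function $i$, has the required definability.

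\textbf{Main obstacle.} The delicate point — and the reason ``definable'' appears in the section title — is the choice of $B$ \emph{inside} $\mathscr A_\delta$: we need $\mathscr A_\delta$ to (i) see all the relevant interpretations $I^{\dot X_k}_{(p_n)_t}$ as genuine infinite subsets of $\omega$ lying in $\calI(\calA)^+$, and (ii) know that $\calA$ is tight and hence produce the diagonalizing $B\in\calI(\calA)$ — and then we need ``$B\in\calI(\calA)$'' to \emph{remain true in $V[G]$}. Point (i) is exactly what Lemma~\ref{preprocessed} and the $\Sigma^1_5$-elementarity buy us; point (ii) is where a merely abstract tight MAD family would fail, since $\mathscr A_\delta$ must actually contain (a code for, via the $\Pi^1_1$ formula $\varphi$) the restriction $\calA\cap\mathscr A_\delta$ and recognize it as tight — this works because $\calA$ is $\Pi^1_1$, provably $\subseteq L$, and ``tight'' is $\Pi^1_3$-expressible, hence absorbed into the fixed finite level of elementarity. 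The remaining bookkeeping — interleaving the diagonalization of $\dot X_k$ against $B$ with the fusion, and verifying the ``positive interpretation'' closure of $\calI(\calA)^+$ — is routine and parallels the P-point case, so I would present it compactly. Finally, having the one-step lemma, the global statement follows: the only reals-adding iterands are the $C(Y)$'s and the $\dot\Q^0_\alpha$'s (assumed to preserve tight MAD families, e.g.\ Sacks-property iterands do), and the standard countable-support iteration preservation theorem for tightness along $S$-proper iterations applies \emph{mutatis mutandis}, giving that $\calA$ is tight — hence MAD — in $L[G]$, with $\mathfrak a=\aleph_1$ and the $\Pi^1_1$ witness $\calA$.
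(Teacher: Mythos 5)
There is a genuine structural gap in how you pass from the one-step statement to the full iteration. You prove (in outline) only the ``plain'' preservation statement that $C(Y)$ keeps $\calA$ tight, and then propose to quote a black-box iteration theorem of the form ``if every iterand preserves tightness, so does the countable support iteration.'' No such theorem exists in the form you need, and this is precisely why the paper does something different: the available iteration machinery (Lemmas 30 and 31 of Guzm\'an--Hru\v{s}\'ak--T\'ellez, the paper's Lemmas \ref{succlemmaMAD} and \ref{limitlemmaMAD}) is stated in terms of \emph{strong} preservation, i.e.\ the existence of $(M,\P,\calA,B)$-generic master conditions: a single $B\in\mathcal{I}(\calA)$, attached to the countable model $M$ once and for all, such that the condition forces \emph{every} $\dot Z\in\mathcal{I}(\calA)^+\cap M[\dot G]$ to meet $B$ infinitely, and the \emph{same} $B$ must be usable at every stage $\alpha\in M$ of the iteration for the successor and limit lemmas to apply. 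Your one-step lemma, which fixes countably many names $\dot X_n$ in advance and then chooses $B$ inside $\mathscr{A}_\delta$ \emph{after} seeing the fusion data, does not produce such a master condition, and in particular gives no way to run the limit step. The paper's proof is an induction on $\gamma<\omega_2$ whose inductive statement is exactly the existence of $(M,\P_\gamma,\calA,A(M))$-generic conditions, where $A(\bar M)$ is the $\leq_L$-least suitable $B$ --- a $\Delta^1_3$-definable selector. That definable choice is not a cosmetic detail: at a $C(Y)$-stage the fusion must be carried out inside $\mathscr{A}_\delta$ (so that the fused tree has $|q_\infty|\leq\delta$ and is a condition), and $\mathscr{A}_\delta$ can only know $B$ because $B=A(\bar M)$ is definable and $\mathscr{A}_\delta$ is $\Sigma^1_5$-correct; an ad hoc $B$ chosen per fusion, as in your sketch, cannot serve as the fixed parameter the iteration lemmas require.

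There is also a local error in the one-step argument: you assert that every interpretation of a name forced into $\mathcal{I}(\calA)^+$ is itself in $\mathcal{I}(\calA)^+$. For leftmost interpretations in the sense of this paper this is false in general (the decided values along one branch may well sit inside a single member of $\calA$), and this is exactly why the P-point proof needs its two-case claim. What is true, and what the paper actually uses, is that the \emph{outer hulls} $Z_t=\{m\mid q_t\nVdash \check m\notin\dot Z\}$ are positive whenever $\dot Z$ is forced positive (since $q_t\forces\dot Z\subseteq Z_t$ and membership in $\mathcal{I}(\calA)$ is absolute because $\calA\subseteq L$). The repair is to diagonalize against the outer hulls, as in the paper's construction of the sequence $\bar q_n$, and to fold the dense sets of $\overline{C(Y)}$ in $\bar M[Y\cap\delta]$ into the same fusion so that the resulting $q_\infty\in\mathscr{A}_\delta$ is simultaneously $(M,C(Y))$-generic and forces $|\dot Z_n\cap B|=\aleph_0$ for all names $\dot Z_n\in\bar M$ forced positive --- that is, so that it is a genuine $(M[Y],C(Y),\calA,B)$-generic condition for the definably chosen $B$.
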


The main thrust of this proof involves the notion of a $(M, \P, \mathcal A, B)$-generic condition. Given \begin{itemize}
\item a forcing notion $\P$,
\item a tight MAD family $\calA$,
\item a condition $p \in \P$,
\item a countable model $M \prec H_{\theta}$ so that $\mathbb P, p, \mathcal A \in M$, and
\item a $B \in \mathcal I(\mathcal A)$ for which $|B\cap X| = \aleph_0$ for all $X \in \mathcal I(\mathcal A)^+ \cap M$,
\end{itemize} a condition $q\leq p$ is said to be an \emph{$(M, \mathbb P, \mathcal A, B)$-generic condition} if $q$ is $(M, \mathbb P)$-generic and $q \forces \forall \dot{Z} \in (\mathcal I(\mathcal A)^+ \cap M[\dot{G}])\, (|\dot{Z} \cap B| = \aleph_0)$. The proof of Lemma \ref{mainlemmaMAD} is by induction on $\gamma$ applied to the forcing notions $\P_\gamma$. The limit case actually follows from the preservation theorem for strongly preserving tightness of a tight MAD family in \cite[Proposition 31]{restrictedMADfamilies}. Indeed what is shown there is more local than what is stated. The proof actually shows the following.

\begin{lemma}[Essentially Proposition 31 of \cite{restrictedMADfamilies}]
Let $\langle \Q_\gamma, \dot{\mathbb R}_\gamma \; | \; \gamma \leq \alpha\rangle$ be a countable support iteration of $(S)$-proper forcing notions for some limit ordinal $\gamma$ and $p \in \Q_\gamma$. For every $\theta$ sufficiently large, $M \prec H_\theta$ countable with $p, \Q_\gamma, \gamma \in M$ and $B \in \mathcal I(\calA)$ so that $B$ has infinite intersection with every $X \in M \cap \mathcal I(\calA)^+ \cap M$ if for all $\alpha <\gamma$ with $\alpha \in M$ there is a $(M, \Q_\alpha, \calA, B)$-master condition extending $p\hook \alpha$ then there is a $(M, \Q_\gamma, \calA, B)$-master condition extending $p$.
\label{limitlemmaMAD}
\end{lemma}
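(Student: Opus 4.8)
The plan is to verify that Lemma~\ref{limitlemmaMAD} is, as indicated, exactly what the proof of \cite[Proposition~31]{restrictedMADfamilies} delivers, by replaying that argument and checking that its only global input about the iteration below $\gamma$ is the stated hypothesis. So fix $\theta$, $M\prec H_\theta$ with $M\cap\omega_1\in S$, $p\in\Q_\gamma$, $\gamma$ and $B$ as in the statement, and set $\delta=\sup(M\cap\gamma)$; as $M$ is countable, $\cf(\delta)=\omega$. I would fix an increasing sequence $\langle\gamma_n\;|\;n<\omega\rangle$ with $\gamma_0=0$, each $\gamma_n\in M\cap\gamma$, and $\sup_n\gamma_n=\delta$, and, using that $M$ is countable, an enumeration $\langle(D_n,\dot Z_n)\;|\;n<\omega\rangle$ in which every dense open $D\subseteq\Q_\gamma$ with $D\in M$ and every $\Q_\gamma$-name $\dot Z\in M$ with $\Vdash\dot Z\subseteq\omega$ occurs cofinally often.

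Next I would run the standard countable support fusion that proves properness of the iteration, but carry two extra invariants. Concretely, I build $p=p_0\geq p_1\geq p_2\geq\cdots$ in $\Q_\gamma$ with $p_{n+1}\restriction\gamma_n=p_n\restriction\gamma_n$ such that, for all $n$,
\begin{enumerate}
\item $p_n\restriction\gamma_n$ is an $(M,\Q_{\gamma_n},\calA,B)$-master condition (in particular it is $(M,\Q_{\gamma_n})$-generic and forces that $B$ still meets every element of $\mathcal I(\calA)^+\cap M[\dot G_{\gamma_n}]$), and
\item $p_{n+1}\Vdash$ ``$\dot Z_n\notin\mathcal I(\calA)^+$ or $(\exists m\geq n)(m\in B\cap\dot Z_n)$''.
\end{enumerate}
The successor step is where the hypothesis of the lemma is used: having $p_n$, the condition $p_n\restriction\gamma_n$ is generic over $M$, so in $V[G_{\gamma_n}]$ one finds a name $\dot r\in M$ for an element of $D_n$ below $p_n$ with $\dot r\restriction\gamma_n\in\dot G_{\gamma_n}$, and then one applies the hypothesis inside $M[\dot G_{\gamma_n}]$ to the interval iteration $\Q_{\gamma_{n+1}}/\dot G_{\gamma_n}$ to extend $\dot r\restriction[\gamma_n,\gamma_{n+1})$ to a master condition over $M[\dot G_{\gamma_n}]$; pulling this back gives $p_{n+1}\restriction\gamma_{n+1}$ (this is the ``below any condition of $M$'' form of the hypothesis, which is available in any case, since when Lemma~\ref{limitlemmaMAD} is invoked one already knows, for each $\alpha<\gamma$, that master conditions exist below every condition of $\Q_\alpha$). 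If invariants (i) and (ii) are maintained, the limit $q$ --- the condition with $q\restriction\gamma_n=p_n\restriction\gamma_n$ for all $n$ and $q\restriction[\delta,\gamma)=p\restriction[\delta,\gamma)$ --- is $(M,\Q_\gamma)$-generic by the usual argument, and, since every element of $\mathcal I(\calA)^+\cap M[\dot G_\gamma]$ is $\dot Z^{\dot G_\gamma}$ for a $\Q_\gamma$-name $\dot Z\in M$ and such a $\dot Z$ equals $\dot Z_n$ for infinitely many $n$, invariant (ii) forces $|\dot Z\cap B|=\aleph_0$; thus $q$ is an $(M,\Q_\gamma,\calA,B)$-master condition extending $p$, as required.

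The main obstacle is arranging invariant (ii), and this is exactly the nontrivial core of \cite[Proposition~31]{restrictedMADfamilies}: the name $\dot Z_n$ may be for a subset of $\omega$ that first appears only at the stage $\delta=\sup(M\cap\gamma)$ (countable support iterations add reals at stages of countable cofinality), so it is not decided by any single $p_k\restriction\gamma_k$, and invariant (ii) cannot simply be inherited from a master condition below $\gamma$. What one shows is that, working in $V[G_{\gamma_n}]$ and below $p_n\restriction[\gamma_n,\gamma)$, the tail conditions that force ``$\dot Z_n\notin\mathcal I(\calA)^+$'' together with those that force ``$(\exists m\geq n)(m\in B\cap\dot Z_n)$'' are dense; if they were not, some tail condition would force $\dot Z_n$ to be a positive set that is almost disjoint from $B$, and then --- using that $p_n\restriction\gamma_n$ has restored in $V[G_{\gamma_n}]$ the hypothesis that $B$ meets every positive set of $M[\dot G_{\gamma_n}]$, together with the tightness of $\calA$ and the reading of names afforded by the iterands (this is the only place where more about $\Q_\gamma$ than the existence of master conditions below $\gamma$ is used) --- one would extract a positive set lying in $M[\dot G_{\gamma_n}]$ that is almost disjoint from $B$, a contradiction. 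I would carry this part out following \cite[Proposition~31]{restrictedMADfamilies} verbatim, the only difference being the bookkeeping that isolates the hypothesis actually used; the remaining verifications (coherence of the fusion, $(M,\Q_\gamma)$-genericity of $q$, and the support of $q$) are the standard ones for countable support iterations.
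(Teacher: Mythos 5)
Your proposal is correct and takes essentially the same route as the paper, which itself offers no proof of this lemma beyond observing that the fusion argument establishing \cite[Proposition 31]{restrictedMADfamilies} is local and yields exactly this statement; your reconstruction of that fusion (master conditions along a cofinal sequence in $M\cap\gamma$, bookkeeping of the names $\dot{Z}_n$ so each appears cofinally often, and the density/outer-hull argument handling one name per step) is the intended argument. Your remark that the hypothesis must in practice be used in its stronger ``extension below names/conditions of $M$'' form is apt and matches how the lemma is actually invoked in the induction proving Lemma \ref{mainlemmaMAD}.
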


We also note that the successor case preserves this property as well.

\begin{lemma}[Lemma 30 of \cite{restrictedMADfamilies}]
Assume $p_0, \Q_0, \calA \in M \prec H_\theta$, $B \in \mathcal I(\calA)$ has infinite intersection with every $X \in \mathcal I(\calA)^+ \cap M$ and $q_0 \leq p_0$ is an $(M, \Q_0, \calA, B)$ generic condition. Assume moreover that $\dot{\Q_1} \in M$ is a $\P$ name for a forcing notion, $\dot{p}_1 \in M$ is a $\Q_0$ name for a condition in $\dot{\Q}_1$ and $p_0$ forces that $\dot{q}_1$ is a $(M[\dot{G}], \dot{\Q}_1, \calA, B)$-generic condition. Then $(q_0, \dot{q}_1)$ is a $(M, \Q_1 * \dot{\Q}_1, \calA, B)$-generic condition.
\label{succlemmaMAD}
\end{lemma}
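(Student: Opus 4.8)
The statement is the two-step iteration lemma for $(M,\mathbb P)$-generic conditions, now carrying along the extra clause about $B$, so the plan is to split the verification of ``$(q_0,\dot q_1)$ is an $(M,\Q_0 * \dot\Q_1,\calA,B)$-generic condition'' into its two defining parts and reduce each to a one-step fact. (I read the statement with the evident corrections: it is $q_0$ that forces $\dot q_1$ to be $(M[\dot G],\dot\Q_1,\calA,B)$-generic, and the conclusion concerns $\Q_0 * \dot\Q_1$.) Throughout, fix a $V$-generic filter $G$ for $\Q_0 * \dot\Q_1$ containing $(q_0,\dot q_1)$ and factor it as $G=G_0 * G_1$, where $G_0$ is $\Q_0$-generic over $V$ with $q_0\in G_0$, and, writing $\Q_1:=(\dot\Q_1)_{G_0}$ and $q_1:=(\dot q_1)_{G_0}$, the filter $G_1$ is $\Q_1$-generic over $V[G_0]$ with $q_1\in G_1$.

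\emph{Genericity.} That $(q_0,\dot q_1)$ is $(M,\Q_0 * \dot\Q_1)$-generic, ignoring $\calA$ and $B$, is the classical composition lemma for generic conditions: if $q_0$ is $(M,\Q_0)$-generic and $q_0\forces$ ``$\dot q_1$ is $(M[\dot G_0],\dot\Q_1)$-generic'', then $(q_0,\dot q_1)$ is $(M,\Q_0 * \dot\Q_1)$-generic (see any standard treatment of proper forcing). The hypothesis that $q_0$ forces $\dot q_1$ to be an $(M[\dot G_0],\dot\Q_1,\calA,B)$-\emph{generic condition} includes, as its first conjunct, precisely that $q_0$ forces $\dot q_1$ to be $(M[\dot G_0],\dot\Q_1)$-generic (and that $\dot q_1\le\dot p_1$), so this part is immediate, and in particular $(q_0,\dot q_1)\le(p_0,\dot p_1)$.

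\emph{The $B$-clause.} By hypothesis and $q_0\in G_0$, in $V[G_0]$ the condition $q_1$ is an $(M[G_0],\Q_1,\calA,B)$-generic condition; hence $q_1\forces_{\Q_1}$ ``$|\dot Z\cap B|=\aleph_0$ for every $\dot Z\in\mathcal I(\calA)^+\cap M[G_0][\dot G_1]$''. Since $q_1\in G_1$, it follows that in $V[G_0][G_1]$ we have $|Z\cap B|=\aleph_0$ for every $Z\in\mathcal I(\calA)^+\cap M[G_0][G_1]$. Now $V[G_0][G_1]=V[G]$ and, since $M\prec H_\theta$, the usual reindexing of two-step names (every $\Q_0 * \dot\Q_1$-name $\tau\in M$ becomes a $\Q_0$-name $\tau^*\in M$ for a $\dot\Q_1$-name, with $\tau_G=(\tau^*_{G_0})_{G_1}$) gives $M[G_0][G_1]=M[G]$; as both sides of the displayed inclusion live inside $V[G]$, no cross-model absoluteness of $\mathcal I(\calA)^+$ is needed. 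Thus in $V[G]$ we have $|Z\cap B|=\aleph_0$ for every $Z\in\mathcal I(\calA)^+\cap M[G]$. As $G$ was an arbitrary generic containing $(q_0,\dot q_1)$, we conclude $(q_0,\dot q_1)\forces$ ``$|\dot Z\cap B|=\aleph_0$ for every $\dot Z\in\mathcal I(\calA)^+\cap M[\dot G]$'', which is the second defining clause, completing the verification.

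\emph{Main point.} There is no genuine obstacle here: the content is entirely bookkeeping, namely (i) invoking the classical two-step composition lemma for genericity and (ii) the identification $M[G_0][G_1]=M[G]$. The one spot that rewards care is that the $B$-clause speaks about the \emph{extension} $M[\dot G]$ rather than about $M$, so one must be disciplined about which portion of the generic is substituted into which name; once the factorization $G=G_0 * G_1$ is in place this is automatic, which is exactly why the successor step costs so little.
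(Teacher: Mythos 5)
Your proof is correct: the paper itself gives no argument for this lemma (it is quoted as Lemma 30 of the Guzm\'an--Hru\v{s}\'ak--T\'ellez paper), and your factorization $G=G_0*G_1$ together with the classical composition lemma for $(M,\P)$-generic conditions and the identification $M[G_0][G_1]=M[G]$ is exactly the standard argument that citation rests on. You also read the statement's typos ($q_0$ rather than $p_0$ forcing the genericity of $\dot q_1$, and $\Q_0*\dot\Q_1$ in the conclusion) in the intended way, so nothing further is needed.
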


Given these two results we can now prove Lemma \ref{mainlemmaMAD}.

\begin{proof}
Assume first that $V=L$ and fix a tight MAD family $\calA$ which is $\Pi^1_1$ and provably (in $\ZFC$) consists only of constructible reals. Given a countable transitive model of a sufficient fragment of set theory, $\bar{M}$, let $A(\bar{M})$ be the $\leq_L$-least $B \in \mathcal I(\calA)$ so that for all $C \in \bar{M} \cap \mathcal I(\calA)^+$ we have that $C \cap B$ is infinite. Note that this function is $\Delta^1_3$ definable since $(\bar{M}, B) \in A$ if and only if $\bar{M}$ models a sufficient fragment of $\ZFC$ (arithmetic), $B \in \mathcal I(\calA)$ ($\Sigma^1_2$) has infinite intersection with every $C \in \calA \cap \bar{M}$ ($\Pi^1_1$) and for all $B'$ if $B' \in \mathcal I(\calA)$ has infinite intersection with every $C \in \calA \cap \bar{M}$ then $B \leq_L B'$ ($\Pi^1_2$). Since the image of any real under the Mostowski collapse is itself we will frequently refer to the image of an arbitrary countable model $M$ under $A$ by which we mean the image of $M$'s transitive collapse. We prove by induction on $\gamma < \omega_2$ that if $M\prec L_{\omega_2}$ is countable, $M \cap \omega_1 \in S$, $p \in \mathbb P_\gamma$, $\gamma, p, P_\gamma \in M$ then there is a $(M, \P_\gamma, \calA, A(M))$-generic condition $q \leq p$. Observe that this implies that for each $\gamma$ we have $\forces _\gamma$ `` $\check{\calA}$ is tight" and hence in $V^{\P_{\omega_2}}$ $\calA$ is tight. The result follows.

In light of Lemmas \ref{limitlemmaMAD} and \ref{succlemmaMAD} it suffices to restrict our attention to iterands of the form $C(Y)$. For the rest of the proof assume $V = L[Y]$ for $Y \subseteq \omega_1$ added by forcing over $L$. Fix $M$, $(p, \dot{q}_0) \in P_\gamma * \dot{C}(Y)$, etc.\ as in the inductive statement with $M \cap \omega_1 = \delta \in S$. Let $\bar{M}$ be the transitive collapse of $M$. Note that $\bar{M} = L_\alpha$ for some $\alpha$. We use the convention that if $x \in M$ then we let $\bar{x}$ be its image under the Mostowski collapse in $\bar{M}$. Let $B = A(\bar{M})$. By assumption, there is an $r \leq p$ which is a $(M, \P_\gamma, \calA, B)$-generic condition. Let $r \in G$ be $\P_\gamma$ generic over $L$. Because of how the forcing is defined, alongside the inductive assumption and the fact that $\P_\gamma$ is $S$-proper, we have that $L[G] = L[Y]$ for some $Y \subseteq \omega_1$ and in $L[Y]$ $\bar{M}[Y \cap \delta] \prec L_{\omega_2}[Y]$, $B$ has infinite intersection with each $X \in \mathcal I(\mathcal A)^+ \cap \bar{M}[Y\cap \delta]$ and $\omega_1^{\bar{M}[Y \cap \delta]} = \delta \in S$ (which of course is still stationary). This set up is the application of the inductive assumption. From now on we work in $L[Y]$.

Let $M \cap \omega_1 = \delta \in S$. It follows that $\bar{M}, \bar{M}[Y \cap \delta] \in \mathscr{A}_\delta$. Moreover, by $\Delta^1_3$-correctness of $\mathscr{A}_\delta$, we get that $B \in\mathscr{A}_\delta$. Working in $\mathscr A_\delta$ fix a countable, cofinal sequence $\{\delta_n\; | \; n < \omega\} \in \mathscr A_{\delta}$ so that for all $n< \omega$ we have $\delta_n \in \bar{M}[Y \cap \delta]$. The fact that this sequence is an {\em element} (and not just a subset) of $\mathscr A_\delta$ (though not $\bar{M}[Y \cap \delta]$) is crucial. Let $q \in M[Y]$ be the evaluation of $\dot{q}$ in $M[Y]$ and let $\bar{q}$ be its image in $\bar{M}[Y \cap \delta]$. The goal is to find a $q_\infty \leq q$ which is an $(M[Y], C(Y), \mathcal A, B)$-generic condition {\em in} $\mathscr A_\delta$(!!). 

Now, let $\{\bar{D}_k \; | \; k < \omega\}$ be an enumeration in $\mathscr{A}_\delta$ of the dense subsets of $\bar{C(Y)}$ in $\bar{M}[Y \cap \delta]$ and let $\{\dot{Z}_n \; | \; n < \omega\}$ be an enumeration in $\mathscr{A}_\delta$ of all $\bar{C(Y)}$-names for subsets of $\omega$ in $\bar{M}$ which are forced to be in $\mathcal I(\mathcal A)^+$ so that each name appears infinitely often. We will inductively define in $\mathscr{A}_\delta$ a sequence $\{\bar{q}_n \; | \; n < \omega \}$ so that the following conditions hold:
\begin{enumerate}
\item
$\bar{q} = \bar{q}_0$
\item
For all $n < \omega$ we have $\bar{q}_{n+1} \leq_n \bar{q}_n$
\item
For all $n < \omega$ $\bar{q}_{n+1} \forces \bar{D}_n \cap \bar{M} \cap \dot{G} \neq \emptyset$
\item
For all $n < \omega$ we have $|\bar{q}_{n+1}| \geq \delta_n$.
\item
For all $n < \omega$ $\bar{q}_{n+1} \forces \exists m > \check{n} \, m \in \dot{Z}_n \cap \check{B}$
\end{enumerate}

Assuming we can do this, let $q_\infty$ be the fusion of the sequence. Note that since the sequence is in $\mathscr A_\delta$, so is $q$ itself. In this case we have that $|q| \leq \delta$ by condition 4 and every branch of $q$ codes $Y$ up to $\delta$ and is therefore a condition. Given this, the fact that it witnesses the lemma is obvious.

Thus it remains to show that the sequence can be constructed in $\mathscr A_\delta$. This is done by induction. The case $n = 0$ is clear. Assume we have constructed $\{\bar{q}_l \; | \; l \leq n\}$ for some $n < \omega$ and that the sequence is in $\mathscr{A}_\delta$ and $\mathscr{A}_\delta$ inductively thinks that all of the items 1 - 5 hold. We will show there is a $\bar{q}_{n+1}$ as needed in $\mathscr A_\delta$. This will suffice since then we can choose the least such one relative to the global well ordering and this will give us a definition in $\mathscr{A}_\delta$ so that the whole sequence will be defined. 

First, let $\bar{q}_n ' \leq _n \bar{q}_n$ be a condition satisfying 2 - 4. That this is possible follows from the proof that $C(Y)$ is proper; see \cite[Lemma 7]{FF10}. Now let $\{t_i \; | \; i < 2^n\}$ enumerate the splitting nodes of $\bar{q}_n '$. Second, for each $i < 2^n$ let $Z_{i, n}$ be $\{m < \omega \; | \; (\bar{p}'_n)_{t_i} \nVdash \check{m} \notin \dot{Z}_n\}$. It's known that each $Z_{i, n} \in \mathcal I(\mathcal A)^+$. As a result $|B \cap Z_{i, n}| = \aleph_0$ and therefore we can find for each $i$ an extension $\bar{q}'_{n, i}\leq (\bar{q} '_n)_{t_i}$ and an $m > n$ so that $\bar{q}'_{n, i} \forces \check{m} \in \check{B} \cap \dot{Z}_n$. Finally let $\bar{q}_{n+1} = \bigcup_{i < 2^n} \bar{q}'_{n, i}$. This clearly works and so completes the construction and hence the lemma.

\end{proof}

As a result of these lemmas we get the following.
\begin{theorem}
If $G\subseteq\P_{\omega_2}$ is generic over $L$ then in $V[G]$ there is a $\Pi^1_1$ tight MAD family of size $\aleph_1$. In particular, it's consistent that there is a $\Pi^1_1$-tight MAD family, a $\Delta^1_3$ well order of the reals and $\mathfrak{a} = \aleph_1 < \mfc = \aleph_2$.
\label{MADthm}
\end{theorem}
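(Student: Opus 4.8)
The plan is to read the theorem off from the ingredients already assembled, with only a brief absoluteness check in between. Work over $L$, taking each $\dot{\Q}_\alpha^0$ trivial so that $\P_{\omega_2}$ is exactly the iteration to which Lemma~\ref{mainlemmaMAD} applies. By the lemma producing a $\Pi^1_1$ tight MAD family in $L$, fix a tight MAD family $\calA=\{A_\alpha\mid\alpha<\omega_1\}$ which is $\Pi^1_1$-definable, say by a formula $\psi$, and satisfies $\ZFC\proves$``$\calA\subseteq L$''. Recall from the construction of $\calA$ that $\varphi(x)$ denotes the recursion defining the $A_\alpha$'s, and that $\psi$ is the statement ``$X$ codes a structure isomorphic to some $(L_\alpha,\in)$ and $L_{\alpha+\omega}\models\varphi(X)$''. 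Let $G\subseteq\P_{\omega_2}$ be generic over $L$ and pass to $V[G]=L[G]$.

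First I would invoke Lemma~\ref{mainlemmaMAD}: in $L[G]$ the family $\calA$ is still tight, and since tightness implies maximality, $\calA$ is a MAD family in $L[G]$. Next I would argue that $\calA$ retains its complexity. The characterisation $\psi$ is $\Pi^1_1$ on $X$ because well-foundedness of a coded countable structure is $\Pi^1_1$ and satisfaction in such a structure is arithmetic. The key point is that $\psi$ is absolute between $L$ and $L[G]$: for a fixed real $X$, whether $X$ codes some $L_\alpha$ and whether the corresponding $L_{\alpha+\omega}$ satisfies $\varphi(X)$ does not depend on the ambient universe. Since any $X$ satisfying $\psi$ is provably in $L$, the $\Pi^1_1$ set defined by $\psi$ in $L[G]$ is precisely $\calA$; equivalently, $\P_{\omega_2}$ adds no new reals to this set. (Alternatively one appeals directly to Mostowski--Shoenfield absoluteness of $\Pi^1_1$ predicates.) Hence $\calA$ is a $\Pi^1_1$ MAD family in $L[G]$.

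It remains to pin down cardinalities and read off the cardinal characteristic. Since $\P_{\omega_2}$ is proper and $\omega_2$-c.c.\ and forces $2^{\aleph_0}=\aleph_2$ (the cited Lemma~15 of \cite{FF10}), all cardinals are preserved and $\mfc=\aleph_2$ in $L[G]$; in particular $|\calA|=\aleph_1$ there, as $\calA$ is indexed by $\omega_1$ in $L$. A MAD family of size $\aleph_1$ witnesses $\mfa=\aleph_1$, so $\calA$ is a $\Pi^1_1$ witness to $\mfa=\aleph_1$ of cardinality $\aleph_1$, and $\mfa=\aleph_1<\mfc=\aleph_2$ holds in $L[G]$. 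Finally, Theorem~1 of \cite{FF10} (recorded above) provides a $\Delta^1_3$ well-order of the reals in $L[G]$. Putting these together yields both assertions of the theorem.

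The only step carrying any subtlety is the middle one — verifying that the $\Pi^1_1$ definition of $\calA$ is undisturbed by the forcing and still picks out a genuine MAD family of the intended size. Since $\calA$ provably consists of constructible reals and the defining predicate is absolute, this is routine, and I do not expect a real obstacle; the substantive work has already been carried out in Lemmas~\ref{mainlemmaMAD}, \ref{limitlemmaMAD}, \ref{succlemmaMAD} and in the construction of the $\Pi^1_1$ tight MAD family in $L$.
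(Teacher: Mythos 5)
Your proposal is correct and matches the paper's (implicit) argument: the theorem is recorded there as an immediate consequence of the construction of the $\Pi^1_1$ tight MAD family in $L$, Lemma~\ref{mainlemmaMAD}, Mostowski absoluteness of the $\Pi^1_1$ definition together with $\ZFC\proves$``$\calA\subseteq L$'', and the cardinal-arithmetic and well-order properties of $\P_{\omega_2}$. The only cosmetic slip is calling $\P_{\omega_2}$ proper where the paper has $S$-proper, which still suffices for preservation of $\omega_1$.
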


\section{Preservation of Definable Selective Independent Families}
In this section we study the preservation of selective independent families. Recall that a family $\mathcal I \subseteq [\omega]^\omega$ is {\em independent} if for all finite, disjoint $\calA, \calB \in [\mathcal I]^{<\omega}$ the set $\bigcap \calA \setminus \bigcup \calB$ is infinite. Such a family is a {\em maximal independent family} if it is maximal with this property with respect to inclusion. The cardinal characteristic $\mfi$ is the least size of a maximal independent family. The following notation will facilitate our work below.
\begin{notation} For $\mathcal{I}\subseteq [\omega]^{\omega}$,
\begin{enumerate}
\item let $\mathrm{FF}(\mathcal{I})$ denote the set of finite partial functions $h$ from $\mathcal{I}$ to  $\{0,1\}$, and
\item for $h\in\mathrm{FF}(\mathcal{I})$  write $\mathcal{I}^h$ for $$\bigcap\{A\mid A\in\mathrm{dom}(h)\textnormal{ and }h(A)=1\}\cap\bigcap\{\omega\backslash A\mid A\in\mathrm{dom}(h)\textnormal{ and }h(A)=0\}.$$
\end{enumerate}
\end{notation}

In this terminology, a family $\mathcal{I}\subseteq [\omega]^\omega$ is independent if $\mathcal{I}^h$ is infinite for all $h\in\mathrm{FF}(\mathcal{I})$ and an independent family $\mathcal{I}$ is maximal if $\forall X\in [\omega]^\omega\;\exists h\in\mathrm{FF}(\mathcal{I})\text{ such that }\mathcal{I}^h\cap X\text{ or }\mathcal{I}^h\backslash X\text{ is finite}$. Such an $\mathcal{I}$ is \emph{densely maximal} if $\forall X\in [\omega]^\omega\text{ and }h'\in\mathrm{FF}(\mathcal{I})\;\exists h\supseteq h'\text{ in }\mathrm{FF}(\mathcal{I})\text{ such that }\mathcal{I}^h\cap X\text{ or }\mathcal{I}^h\backslash X\text{ is finite.}$

The \emph{density ideal of $\mathcal{I}$}, denoted $\mathrm{id}(\mathcal{I})$, is $$\{X\subseteq\omega\mid \forall h'\in\mathrm{FF}(\mathcal{I})\;\exists h\supseteq h'\text{ in }\mathrm{FF}(\mathcal{I})\text{ such that }\mathcal{I}^h\cap X\text{ is finite}\}.$$ Dual to the density ideal of $\mathcal{I}$ is the \emph{density filter of $\mathcal{I}$}, denoted $\mathrm{fil}(\mathcal{I})$ and defined as $$\{X\subseteq\omega\mid \forall h'\in\mathrm{FF}(\mathcal{I})\;\exists h\supseteq h'\text{ in }\mathrm{FF}(\mathcal{I})\text{ such that }\mathcal{I}^h\backslash X\text{ is finite}\}.$$ Observe that for an infinite independent family $\mathcal{I}$, none of the above definitions' meanings change if we replace the word ``finite'' with ``empty''. The key definition for this section is the following.

\begin{definition}
An independent family $\mathcal I$ is called {\em selective} if it is densely maximal and ${\rm fil}(\mathcal I)$ is Ramsey.
\end{definition}

It was shown in \cite{DefMIF} that if $V= L$, then there is a $\Sigma^1_2$ selective independent family completely contained in $L$ and, moreover, the existence of a $\Sigma^1_2$ maximal independent family implies the existence of a $\Pi^1_1$-maximal independent family. It follows that preserving a fixed $\Sigma^1_2$ selective independent family contained in $L$ will allow us to obtain the existence of a $\Pi^1_1$ maximal independent family. Let us fix a $\Sigma^1_2$ selective independent family contained in $L$, say $\mathcal I$, for the rest of this section. We will show the following.

\begin{theorem}
If $G\subseteq \P_{\omega_2}$ is generic over $L$ then in $L[G]$ we have that $\mathcal I$ is selective (and hence maximal). Consequently the existence of a $\Delta^1_3$ well-order is consistent with a coanalytic maximal independent family of size $\aleph_1 < 2^{\aleph_0} = \aleph_2$.
\label{mainthmIND}
\end{theorem}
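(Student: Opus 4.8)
The plan is to follow the same template as the previous two sections: reduce the theorem to the single iterand $C(Y)$ via known preservation theorems for countable support iterations of $(S)$-proper forcing, then prove the crucial one-step lemma for $C(Y)$ by a fusion argument carried out inside some $\mathscr A_\delta$. More precisely, first I would invoke the preservation theory for selective independent families along countable support iterations of $(S)$-proper posets (the analogues of Lemmas \ref{limitlemmaMAD} and \ref{succlemmaMAD}, as developed in \cite{DefMIF} and its successors), which handles the limit and successor stages and reduces everything to showing: if $L[Y]\models$``$\mathcal I$ is selective'' then $C(Y)$ forces ``$\mathcal I$ is selective''. Since $\mathcal I$ is $\Sigma^1_2$ and provably contained in $L$, it is correctly computed by every $\mathscr A_i = L_{\mu_i}[Y\cap i]$, and — because we built $\vec\mu$ with $\Sigma^1_5$ elementarity — the statements ``$\mathcal I$ is densely maximal'' and ``$\mathrm{fil}(\mathcal I)$ is Ramsey'' are expressible and true in each $\mathscr A_i$ and are downward absolute to each $L[Y\cap i]$, exactly as in the P-point and MAD cases.

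\textbf{The one-step argument.} To show $C(Y)$ preserves selectivity, I would separately preserve dense maximality and Ramseyness of $\mathrm{fil}(\mathcal I)$. For dense maximality: given a name $\dot X$ for an infinite subset of $\omega$, a condition $p$, and an $h'\in\mathrm{FF}(\mathcal I)$, I want $q\le p$ and $h\supseteq h'$ with $q\Vdash \mathcal I^h\cap\dot X$ or $\mathcal I^h\setminus\dot X$ is finite. Apply Lemma \ref{preprocessed} to get $q_0\le p$ preprocessed for $\dot X$, lying in $\mathscr A_\delta$ for suitable $\delta<\omega_1$ with every branch of $q_0$ coding $Y$ below $\delta$. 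Working in $\mathscr A_\delta$, for each splitting node $t$ of $q_0$ the outer hull $X^{q_0}_t$ is an infinite-or-cofinite-behaving set definable from the function $i$; by dense maximality of $\mathcal I$ \emph{in $\mathscr A_\delta$}, one can thin below each such node and extend $h'$ so that the relevant relative intersection becomes finite along that cone, then fuse. The resulting $q_\omega\in\mathscr A_\delta\cap C(Y)$ works. For Ramseyness of $\mathrm{fil}(\mathcal I)$: the cleanest route is the game-theoretic characterization (a filter $\mathcal F$ is Ramsey iff player II wins the appropriate Mathias/selective game — the analogue of the P-point game $G(\scrU)$ used in Section 3), available by \cite{DefMIF}. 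One then repeats the Section 3 argument essentially verbatim: preprocess for $\dot X$, build in $\mathscr A_\delta$ a strategy for the ``bad'' player together with a fusion sequence of conditions, use that no such strategy is winning in $\mathscr A_\delta$ (since $\mathrm{fil}(\mathcal I)$ is Ramsey there) to extract a losing play that lies in $\mathscr A_\delta$, and let $q_\omega$ be the fusion, which forces the desired set into (the complement of) $\dot X$ with the required pseudo-intersection/homogeneity property. The density-ideal/filter bookkeeping replaces the ``$L_{q_s}(\dot X)\in\scrU$ vs.\ $\notin\scrU$'' case split of the Claim in Section 3.

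\textbf{Main obstacle.} The genuine difficulty, as in the MAD case, is the interplay of three models — $\mathscr A_\delta$, the collapse $\bar M[Y\cap\delta]$ of an elementary submodel, and the names living in $\bar M$ — and ensuring the fusion sequence is an \emph{element} of $\mathscr A_\delta$ (not merely a subset), so that $q_\omega\in\mathscr A_\delta$ and hence $|q_\omega|\le\delta$ and $q_\omega$ is a genuine $C(Y)$-condition. This forces one to fix in $\mathscr A_\delta$ a cofinal sequence $\langle\delta_n\mid n<\omega\rangle$ through $\delta$ with $\delta_n\in\bar M[Y\cap\delta]$, to carry out each step of the fusion by choosing the $<_L$-least witness so the whole construction is definable in $\mathscr A_\delta$, and to keep track of density sets of $\overline{C(Y)}$ from $\bar M[Y\cap\delta]$ simultaneously to guarantee genericity over $M[Y]$. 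The combinatorial content of dense maximality and of the Ramsey game is exactly what was already handled in \cite{DefMIF} for the Sacks-property iterations there; since $C(Y)$ has the Sacks property and the $\mathscr A_i$'s have enough projective elementarity to see $\mathcal I$, $\mathrm{id}(\mathcal I)$ and $\mathrm{fil}(\mathcal I)$ correctly, porting those arguments into the $\mathscr A_\delta$-definability framework of Sections 3--4 is the crux, and — modulo the routine quantifier-counting of Remark 1 — it goes through. The cardinal-characteristic consequence $\mfi=\aleph_1<\mfc=\aleph_2$ with a $\Delta^1_3$ well-order and a $\Pi^1_1$ maximal independent family then follows from \cite{DefMIF} (a $\Sigma^1_2$ maximal independent family yields a $\Pi^1_1$ one) together with the fact that $\P_{\omega_2}$ forces $2^{\aleph_0}=\aleph_2$.
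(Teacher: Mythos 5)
Your reduction-to-$C(Y)$ shape and your treatment of Ramseyness are in the right spirit, but the two places where the real work happens are not filled in, and the step you do sketch would fail. For dense maximality, applying dense maximality of $\mathcal I$ inside $\mathscr A_\delta$ to the outer hulls $Z_t=\{m\mid q_t\not\Vdash m\notin\dot X\}$ only settles the easy half: if some $h\supseteq h'$ has $\mathcal I^h\cap Z_t$ finite, then $q_t\Vdash\mathcal I^h\cap\dot X$ is finite and no fusion is needed at all; in the remaining case one only gets $\mathcal I^h\setminus Z_t$ finite, i.e.\ $Z_t\in\mathrm{fil}(\mathcal I)$, and then ``thinning along that cone and extending $h'$'' accomplishes nothing, since $\dot X$ can be much smaller than its hull, so no amount of shrinking the tree makes $\mathcal I^h\setminus\dot X$ finite. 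Moreover different cones would yield different $h$'s, and fusing them produces no single $h$ forced to work. The paper's successor step (Lemma \ref{INDCYlemma}) is exactly this hard case: one shows \emph{all} the hulls $Z_t$ lie in $\mathrm{fil}(\mathcal I)$, uses that $\mathrm{fil}(\mathcal I)$ is a P-point to get a pseudo-intersection $B\in\mathrm{fil}(\mathcal I)\cap\mathscr A_\delta$ of the $Z_t$'s, uses that it is a Q-point (via $\omega^\omega$-bounding) to thin $B$ to a $C\in\mathrm{fil}(\mathcal I)\cap\mathscr A_\delta$ synchronized with the splitting levels, and then builds a fusion in $\mathscr A_\delta$ forcing $\check C\subseteq\dot X$, contradicting $\dot X\notin\mathrm{fil}(\mathcal I)$. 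So the engine is the selectivity of the filter $\mathrm{fil}(\mathcal I)$ (P-point plus Q-point), not dense maximality of $\mathcal I$ in $\mathscr A_\delta$; your proposal never invokes the Q-point property, and your dense-maximality paragraph stalls precisely where the argument begins. (A Section 3--style game argument applied directly to the ultrafilter $\mathrm{fil}(\mathcal I)$ could in principle substitute, but that is not what you describe for this step.)

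The limit stage is also not available in the form you invoke it: there are no ``analogues of Lemmas \ref{limitlemmaMAD} and \ref{succlemmaMAD}'' (master-condition-style preservation) for dense maximality to quote. The paper handles limits via Shelah's preservation theorem from \cite{Sh92} (Theorem \ref{Shelah_preservation}) for statements of the form $P(\omega)=\langle\mathcal F\rangle_{\mathrm{up}}\cup\langle\mathcal H\rangle_{\mathrm{dn}}$ with $\mathcal F$ Ramsey, along $\omega^\omega$-bounding iterations. To express dense maximality in that form with a \emph{fixed} ground-model $\mathcal F=\mathrm{fil}(\mathcal I)\cap L$ (using Lemma \ref{lemma0i}), one needs that at every stage $\mathrm{fil}(\mathcal I)$ is generated by its ground-model part (Lemma \ref{preservation1}); this rests on a separate one-step fact your plan omits entirely, namely that every dense open subset of $2^{<\omega}$ added by $C(Y)$ contains a ground-model one (Lemma \ref{lemma2i}), iterated by Theorem \ref{DOiterationthm}. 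Without these two ingredients --- the filter-generation lemma for the limit case and the P-point/Q-point fusion for the successor case --- the proof does not go through; the parts of your proposal on Ramseyness of the filter and on extracting a $\Pi^1_1$ witness from a $\Sigma^1_2$ family are consistent with the paper.
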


The centerpiece of the proof of this theorem is the application of a preservation result of Shelah from \cite{Sh92} to a proof of the fact that $C(Y)$ preserves the selectivity of sufficiently definable selective independent families. Towards this we need some more on independent families.

In our discussion of dense independence, we will make use of the following Lemma. The equivalence of $(1)$ and $(2)$ can be found in~\cite[Lemma 31]{FM20}, the of $(2)$ and $(3)$ implicitly appears in~\cite[Theorem 29]{FM19}, as well as~\cite{Sh92}. See also~\cite[Lemma 2]{VFJS21}. For completeness, we give a detailed proof.

\begin{lemma} The following are equivalent:
	\begin{enumerate}
		\item $\mathcal{I}$ is a densely maximal independent family.
		\item For all  $h\in\mathrm{FF}(\mathcal{I})$ and all $X\subseteq\mathcal{I}^h$ either $\mathcal{I}^h\backslash X\in\mathrm{id}(\mathcal{I})$ or there is $h'\in\mathrm{FF}(\mathcal{I})$ such that $h'\supseteq h$ and $\mathcal{I}^{h'}\subseteq\mathcal{I}^h\backslash X$.
		\item For each $X\in\mathcal{P}(\omega)$ there is $h\in\mathrm{FF}(\mathcal{I})$ such that $X\subseteq\omega\backslash\mathcal{I}^h$.
	\end{enumerate}
\end{lemma}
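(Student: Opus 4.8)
The plan is to prove the cycle of implications $(1)\Rightarrow(2)\Rightarrow(3)\Rightarrow(1)$, using the observation recorded just before the lemma that for an independent family $\mathcal{I}$, dense maximality is equivalent to $\mathrm{fil}(\mathcal{I})$ being an ultrafilter and to $P(\omega)=\mathrm{fil}(\mathcal{I})\cup\mathrm{id}(\mathcal{I})$.

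First I would prove $(1)\Rightarrow(2)$. Fix $h\in\mathrm{FF}(\mathcal{I})$ and $X\subseteq\mathcal{I}^h$, and suppose $\mathcal{I}^h\setminus X\notin\mathrm{id}(\mathcal{I})$; I must produce $h'\supseteq h$ with $\mathcal{I}^{h'}\subseteq\mathcal{I}^h\setminus X$. Apply dense maximality to the set $X$ (viewed as a subset of $\omega$) and the condition $h$: there is $h''\supseteq h$ in $\mathrm{FF}(\mathcal{I})$ with $\mathcal{I}^{h''}\cap X$ finite or $\mathcal{I}^{h''}\setminus X$ finite. The second alternative would give $\mathcal{I}^{h''}\subseteq^* X\subseteq\mathcal{I}^h$, hence $\mathcal{I}^h\setminus X$ disjoint mod finite from $\mathcal{I}^{h''}$; iterating this below arbitrary extensions of $h$ would witness $\mathcal{I}^h\setminus X\in\mathrm{id}(\mathcal{I})$, contrary to assumption (here one uses that dense maximality lets one refine again below any $h'''\supseteq h$). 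So $\mathcal{I}^{h''}\cap X$ is finite; since $\mathcal{I}$ is infinite, the remark before the lemma lets us upgrade ``finite'' to ``empty'' by choosing one more coordinate appropriately, yielding $h'\supseteq h''\supseteq h$ with $\mathcal{I}^{h'}\cap X=\emptyset$, i.e. $\mathcal{I}^{h'}\subseteq\mathcal{I}^h\setminus X$, as wanted.

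Next, $(2)\Rightarrow(3)$: given $X\in\mathcal{P}(\omega)$, I would apply $(2)$ with $h=\emptyset$ (so $\mathcal{I}^h=\omega$) and with the set $X$. Either $\omega\setminus X\in\mathrm{id}(\mathcal{I})$, in which case by definition of the density ideal there is $h\in\mathrm{FF}(\mathcal{I})$ with $\mathcal{I}^h\cap(\omega\setminus X)$ finite, hence (again upgrading ``finite'' to ``empty'') some $h$ with $\mathcal{I}^h\subseteq X$, i.e. $\omega\setminus X\subseteq\omega\setminus\mathcal{I}^h$ — wait, that is the wrong side, so in this case I instead want to note $X\in\mathrm{fil}(\mathcal{I})$ is not what $(3)$ asks; rather $(3)$ wants $X\subseteq\omega\setminus\mathcal{I}^h$, i.e. $\mathcal{I}^h\cap X=\emptyset$, i.e. $\mathcal{I}^h\subseteq\omega\setminus X$, which is exactly $\omega\setminus X\in\mathrm{fil}(\mathcal{I})$ witnessed by $h$. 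So I would instead run $(2)$ on the complement: apply $(2)$ with $h=\emptyset$ and the set $\omega\setminus X$. If there is $h'$ with $\mathcal{I}^{h'}\subseteq\omega\setminus(\omega\setminus X)=X$... again the wrong side. Cleanly: apply $(2)$ with $h=\emptyset$ and the subset $X$ itself; the second disjunct of $(2)$ directly gives $h'$ with $\mathcal{I}^{h'}\subseteq\omega\setminus X$, which is $(3)$ after upgrading finite/empty; the first disjunct says $\omega\setminus X\in\mathrm{id}(\mathcal{I})$, which means $X\in\mathrm{fil}(\mathcal{I})$, and then I apply $(2)$ once more to $\omega\setminus X$ in place of $X$ — now $\mathcal{I}^h\setminus(\omega\setminus X)=\mathcal{I}^h\cap X$, and if $X\in\mathrm{fil}(\mathcal{I})$ then $\mathcal{I}^h\cap X$ cannot be in $\mathrm{id}(\mathcal{I})$ for $h=\emptyset$... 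I will need to be careful here, and the robust route is to phrase $(3)$ as ``$\mathrm{fil}(\mathcal{I})$ is an ultrafilter'': $(3)$ says every $X$ has $\omega\setminus X\in\mathrm{fil}(\mathcal{I})$ witnessed by a single $h$, equivalently (by the pre-lemma remark collapsing the $\forall h'\exists h$ in the ultrafilter case) $\mathrm{fil}(\mathcal{I})$ is an ultrafilter, which $(2)$ delivers via the dichotomy ``$X\in\mathrm{id}(\mathcal{I})$ or $\omega\setminus X\in\mathrm{fil}(\mathcal{I})$'' applied at $h=\emptyset$.

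Finally $(3)\Rightarrow(1)$: assume $(3)$. To check dense maximality, fix $X\in[\omega]^\omega$ and $h'\in\mathrm{FF}(\mathcal{I})$. Consider the trace $X\cap\mathcal{I}^{h'}$; applying $(3)$ to this set gives $h''\in\mathrm{FF}(\mathcal{I})$ with $X\cap\mathcal{I}^{h'}\subseteq\omega\setminus\mathcal{I}^{h''}$, i.e. $\mathcal{I}^{h''}\cap X\cap\mathcal{I}^{h'}=\emptyset$. Now take $h=h'\cup h''$ restricted to a common extension (adjusting so that $h'$ and $h''$ are compatible — if they disagree on some coordinate then $\mathcal{I}^h=\emptyset$ and we are trivially done, otherwise $h\supseteq h'$ and $\mathcal{I}^h\subseteq\mathcal{I}^{h'}\cap\mathcal{I}^{h''}$); then $\mathcal{I}^h\cap X=\emptyset$, which is (even more than) what dense maximality requires. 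One also checks $\mathcal{I}$ is genuinely independent: this is part of the hypothesis that $\mathcal{I}$ is an independent family, maintained throughout.

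The main obstacle I anticipate is purely bookkeeping: keeping straight the ``$\mathcal{I}^h\cap X$ finite vs.\ $\mathcal{I}^h\setminus X$ finite'' dichotomy against the ``$\in\mathrm{id}(\mathcal{I})$ vs.\ $\in\mathrm{fil}(\mathcal{I})$'' dichotomy and managing the compatibility of finite partial functions $h',h''$, together with the repeated (harmless, by the pre-lemma remark) passage between ``finite'' and ``empty''. The cleanest presentation routes everything through the equivalence established just before the lemma — dense maximality $\iff$ $\mathrm{fil}(\mathcal{I})$ is an ultrafilter $\iff$ $P(\omega)=\mathrm{fil}(\mathcal{I})\cup\mathrm{id}(\mathcal{I})$ — so that $(1)\Leftrightarrow(3)$ is almost immediate and only $(1)\Leftrightarrow(2)$ requires the refinement argument above; I would organize the write-up that way.
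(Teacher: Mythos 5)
Your $(1)\Rightarrow(2)$ is essentially the paper's argument and is repairable, but as written the logic slips: from one application of dense maximality at $h$ you cannot conclude that \emph{that particular} $h''$ satisfies ``$\mathcal{I}^{h''}\cap X$ finite''; the correct dichotomy is that either \emph{some} extension of $h$ has this property (and then you upgrade to empty and finish), or \emph{every} application below every $h'''\supseteq h$ returns the other alternative, which is what yields $\mathcal{I}^h\setminus X\in\mathrm{id}(\mathcal{I})$. The paper streamlines this by extracting from $\mathcal{I}^h\setminus X\notin\mathrm{id}(\mathcal{I})$ a witness $h'$ (necessarily compatible with $h$, WLOG $h'\supseteq h$) below which every $\mathcal{I}^{h''}$ meets $\mathcal{I}^h\setminus X$, and then applying dense maximality once below $h'$, where the alternative $\mathcal{I}^{h''}\setminus X=\emptyset$ is impossible. (Also, upgrading ``finite'' to ``empty'' needs several new coordinates, enough to split $\mathcal{I}^{h''}$ into more infinite pieces than the finite set has elements, not just one; this is what the pre-lemma remark licenses.)

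The genuine gaps concern $(3)$. First, $(3)$ must be read as the paper's own proof (and the following Lemma on $P(\omega)=\mathrm{fil}(\mathcal{I})\cup\langle\omega\setminus\mathcal{I}^h\mid h\in\mathrm{FF}(\mathcal{I})\rangle_{\mathrm{dn}}$) reads it, namely for $X\notin\mathrm{fil}(\mathcal{I})$: quantified literally over all $X$ it already fails for $X=\omega$. Your proposed repair --- replacing $(3)$ by ``$\mathrm{fil}(\mathcal{I})$ is an ultrafilter'' --- does not work: for any $A\in\mathcal{I}$ neither $A$ nor $\omega\setminus A$ lies in $\mathrm{fil}(\mathcal{I})$ (look below $\{(A,0)\}$, resp.\ $\{(A,1)\}$), so that statement never holds, and it discards exactly the content of $(3)$ that is used later, namely that every non-filter set is covered by a \emph{single} $\omega\setminus\mathcal{I}^h$; so the pre-lemma sentence you lean on cannot carry $(1)\Leftrightarrow(3)$. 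Ironically, you had the correct one-line proof of $(2)\Rightarrow(3)$ and abandoned it: applying $(2)$ with $h=\emptyset$ to $X$ gives ``$\omega\setminus X\in\mathrm{id}(\mathcal{I})$ (i.e.\ $X\in\mathrm{fil}(\mathcal{I})$) or $X\subseteq\omega\setminus\mathcal{I}^{h'}$ for some $h'$'', which is precisely the intended $(3)$. Second, your $(3)\Rightarrow(1)$ has a hole in the incompatibility case: if the $h''$ supplied by $(3)$ for $X\cap\mathcal{I}^{h'}$ is incompatible with $h'$, then $\mathcal{I}^{h''}\cap X\cap\mathcal{I}^{h'}=\emptyset$ holds automatically and produces no extension of $h'$ at all; ``$\mathcal{I}^h=\emptyset$, trivially done'' is not a witness, since dense maximality demands some $h\supseteq h'$ in $\mathrm{FF}(\mathcal{I})$. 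The paper avoids this by applying $(3)$ in its $(3)\Rightarrow(2)$ step so that the output satisfies a containment $\mathcal{I}^{h^*}\subseteq\mathcal{I}^h\setminus X$, which forces $h^*\not\perp h$ because $\mathcal{I}^{h^*}\neq\emptyset$; under the correct reading of $(3)$ you would also need to observe that $X\cap\mathcal{I}^{h'}\notin\mathrm{fil}(\mathcal{I})$ for $h'\neq\emptyset$ (it sits below $\mathcal{I}^{h'}\notin\mathrm{fil}(\mathcal{I})$ and the filter is upward closed) and handle $h'=\emptyset$ with $X\in\mathrm{fil}(\mathcal{I})$ directly from the definition of the filter.
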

\begin{proof}
To show that $(1)$ implies $(2)$, consider $h\in\mathrm{FF}(\mathcal{I})$ and $X\subseteq\mathcal{I}^h$. Suppose $\mathcal{I}^h\backslash X\notin\mathrm{id}(\mathcal{I})$. Thus, there is $h'\in\mathrm{FF}(\mathcal{I})$ such that for all $h''\supseteq h'$ the set $\mathcal{I}^{h''}\cap(\mathcal{I}^h\backslash X)$ is non-empty. Note that if $h\perp h'$, then $\mathcal{I}^{h'}\cap (\mathcal{I}^h\backslash X)=\emptyset$, which is a contradiction. Therefore $h\not\perp h'$. Without loss of generality $h'\supseteq h$ and so for all $h''\supseteq h'$, $\mathcal{I}^{h''}\backslash X\neq\emptyset$. Since the family $\mathcal{I}$ is densely maximal, there is $h''\supseteq h'$ such that $\mathcal{I}^{h''}\cap X=\emptyset$. Thus, $\mathcal{I}^{h''}\subseteq \mathcal{I}^h\backslash X$. 

To see that $(2)$ implies $(3)$ consider any $Z\notin\mathrm{fil}(\mathcal{I})$. Then $\omega\backslash Z\notin\mathrm{id}(\mathcal{I})$ and so there is $h\in\mathrm{FF}(\mathcal{I})$ such that $h'\supseteq h$, and $|\mathcal{I}^{h'}\cap (\omega\backslash Z)|=|\mathcal{I}^{h'}\backslash Z|=\omega$. Let $Y=\mathcal{I} ^h\backslash Z$. Thus, $Y\subseteq\mathcal{I}^h$. By part $(2)$ either $\mathcal{I}^h\backslash Y\in\mathrm{id}(\mathcal{I})$ or there is $h'\supseteq h$ such that $\mathcal{I}^{h'}\subseteq\mathcal{I}^h\backslash Y$. Suppose $\mathcal{I}^h\backslash Y\in\mathrm{id}(\mathcal{I})$. Then there is $h'\supseteq h$ such that $\mathcal{I}^{h'}\cap (\mathcal{I}^h\backslash Y)=\mathcal{I}^{h'}\backslash Y=\emptyset$. However, $\mathcal{I}^{h'}\backslash Y=\mathcal{I}^{h'}\cap Z=\emptyset$ and so $Z\subseteq\omega\backslash\mathcal{I}^{h'}$ and we are done. If $\exists h'\supseteq h$ such that $\mathcal{I}^{h'}\subseteq\mathcal{I}^h\backslash Y=\mathcal{I}^h\cap Z$, then $\mathcal{I}^{h'}\cap (\omega\backslash Z)=\mathcal{I}^{h'}\backslash Z=\emptyset$, which is a contradiction to the choice of $h$.

Next, we show that $(3)$ implies $(2)$. Let $X\subseteq\mathcal{I}^h$ for some $h\in\mathrm{FF}(\mathcal{I})$. Consider $Y=\mathcal{I}^h\backslash X$. If $\omega\backslash Y\in\mathrm{fil}(\mathcal{I})$, then $Y=\mathcal{I}^h\backslash X\in\mathrm{id}(\mathcal{I})$. Otherwise there is $h^*$ such that $\omega\backslash Y\subseteq\omega\backslash \mathcal{I}^{h^*}$, which implies that $\mathcal{I}^{h^*}\subseteq Y\subseteq \mathcal{I}^h\backslash X$. Note that if $h\perp h^*$, then for some $C\in\mathcal{I}$ we have (without loss of generality) that $\mathcal{I}^{h^*}\subseteq C$ and $\mathcal{I}^{h}\subseteq\omega\backslash C$, which contradicts $\mathcal{I}^{h^*}\subseteq \mathcal{I}^h$. Thus $h^*\not\perp h$ and so $\mathcal{I}^{h*\cup h}\subseteq \mathcal{I}^{h^*}\subseteq\mathcal{I}^h\backslash X$. 

Finally, we show that $(2)$ implies $(1)$. Let $X\in [\omega]^\omega\backslash \mathcal{I}$ and let $h\in\mathrm{FF}(\mathcal{I})$. We want to show that there is $h'\supseteq h$ such that either $\mathcal{I}^{h'}\cap X=\emptyset$ or $\mathcal{I}^{h'}\backslash X=\emptyset$. Consider the set $Y=X\cap\mathcal{I}^h$. Thus, $Y\subseteq\mathcal{I}^h$. If $\mathcal{I}^h\backslash Y\in\mathrm{id}(\mathcal{I})$, then $\mathcal{I}^h\backslash X\in\mathrm{id}(\mathcal{I})$ and so there is $h'\supseteq h$ such that $\mathcal{I}^{h'}\cap (\mathcal{I}^h\backslash X)=\mathcal{I}^{h'}\backslash X=\emptyset$. Otherwise, there is $h'\supseteq h$ such that $\mathcal{I}^{h'}\subseteq\mathcal{I}^h\backslash Y$ and so $\mathcal{I}^{h'}\cap Y=\emptyset$. However, $\mathcal{I}^{h'}\cap Y=\mathcal{I}^{h'}\cap (X\cap \mathcal{I}^h)=\mathcal{I}^{h'}\cap X=\emptyset$.
\end{proof}

In particular, we obtain:

\begin{lemma} 
	A family $\mathcal{I}\subseteq [\omega]^\omega$ is densely maximal if and only if $$P(\omega)=\mathrm{fil}(\mathcal{I})\cup\langle\omega\backslash\mathcal{I}^h\mid h\in\mathrm{FF}(\mathcal{I})\rangle_{\mathrm{dn}}.$$
	\label{lemma0i}
\end{lemma}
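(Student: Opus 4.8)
The plan is to obtain Lemma~\ref{lemma0i} as an immediate reformulation of the equivalence $(1)\Leftrightarrow(3)$ from the preceding lemma. Here I read $\langle\,\omega\backslash\mathcal{I}^h\mid h\in\mathrm{FF}(\mathcal{I})\,\rangle_{\mathrm{dn}}$ as the downward closure of $\{\omega\backslash\mathcal{I}^h\mid h\in\mathrm{FF}(\mathcal{I})\}$, i.e.\ the set of all $X\subseteq\omega$ admitting some $h\in\mathrm{FF}(\mathcal{I})$ with $X\subseteq\omega\backslash\mathcal{I}^h$, and, as throughout this discussion, I take $\mathcal{I}$ to be independent, so that $\mathrm{fil}(\mathcal{I})$ is a proper filter. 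The inclusion $\mathrm{fil}(\mathcal{I})\cup\langle\,\omega\backslash\mathcal{I}^h\mid h\in\mathrm{FF}(\mathcal{I})\,\rangle_{\mathrm{dn}}\subseteq P(\omega)$ is trivial, so the content of the lemma is the equivalence of dense maximality with the reverse inclusion.

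For the forward direction, assuming $\mathcal{I}$ is densely maximal, I would fix $X\in P(\omega)$ and split into cases. If $X\in\mathrm{fil}(\mathcal{I})$ there is nothing to prove. Otherwise I invoke $(1)\Rightarrow(3)$ of the preceding lemma — whose proof in fact produces, for any set lying outside $\mathrm{fil}(\mathcal{I})$, an $h\in\mathrm{FF}(\mathcal{I})$ with $X\subseteq\omega\backslash\mathcal{I}^h$ — so that $X\in\langle\,\omega\backslash\mathcal{I}^h\mid h\in\mathrm{FF}(\mathcal{I})\,\rangle_{\mathrm{dn}}$. This yields the displayed equality.

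For the converse, assuming the displayed equality, I want to verify clause $(3)$ of the preceding lemma and then appeal to $(3)\Rightarrow(1)$. Given $X\notin\mathrm{fil}(\mathcal{I})$, the equality forces $X\in\langle\,\omega\backslash\mathcal{I}^h\mid h\in\mathrm{FF}(\mathcal{I})\,\rangle_{\mathrm{dn}}$, i.e.\ $X\subseteq\omega\backslash\mathcal{I}^h$ for some $h\in\mathrm{FF}(\mathcal{I})$, which is precisely what $(3)$ asserts for such $X$; hence $\mathcal{I}$ is densely maximal.

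There is no genuine obstacle here: the mathematical substance is already carried by the preceding lemma, and this statement merely repackages it. The one point calling for a little care — and the closest thing to an obstacle — is the quantifier bookkeeping: clause $(3)$, although phrased for all $X\in P(\omega)$, is meaningful (and need only be checked) for $X\notin\mathrm{fil}(\mathcal{I})$, since $\omega$ and the other members of $\mathrm{fil}(\mathcal{I})$ are absorbed into the first term of the union; and one must be careful to use $\langle\,\cdot\,\rangle_{\mathrm{dn}}$ in the sense of downward closure rather than as the generated ideal (which, being closed under finite unions, would collapse to all of $P(\omega)$).
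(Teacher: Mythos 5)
Your proposal is correct and matches the paper's treatment: the paper states this lemma as an immediate consequence (``In particular, we obtain'') of the preceding equivalence, exactly the $(1)\Leftrightarrow(3)$ repackaging you carry out, with your observation that clause $(3)$ is only ever used (and proved) for sets outside $\mathrm{fil}(\mathcal{I})$ being the right reading of the paper's slightly loose statement. Your remarks on interpreting $\langle\cdot\rangle_{\mathrm{dn}}$ as downward closure rather than the generated ideal are likewise correct and consistent with the intended meaning.
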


The following are easily verified.

\begin{lemma}\hfill
\begin{enumerate}
\item
$\mathcal{I}\subseteq\mathcal{I}'$ implies that $\mathrm{fil}(\mathcal{I})\subseteq\mathrm{fil}(\mathcal{I}')$;
\item 
if $\kappa$ is a regular uncountable cardinal and $\langle\mathcal{I}_\alpha\mid\alpha<\kappa\rangle$ is a continuous increasing chain then $\mathrm{fil}(\bigcup_{\alpha<\kappa}\mathcal{I}_\alpha)=\bigcup_{\alpha<\kappa}\mathrm{fil}(\mathcal{I}_\alpha)$;
\item 
$\mathrm{fil}(\mathcal{I})=\bigcup\{\,\mathrm{fil}(\mathcal{J})\mid \mathcal{J}\in [\mathcal{I}]^{\leq\omega}\}$.
\end{enumerate}
\label{lemma1i}
\end{lemma}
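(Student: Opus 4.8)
The plan is to establish $(1)$ and $(3)$ by directly unpacking the definition of $\mathrm{fil}(\mathcal{I})$, and then to obtain $(2)$ as a formal consequence of the two. Two trivialities will be used throughout: $\mathcal{I}^h$ depends only on $h$ and not on the ambient family, so whenever $h\in\mathrm{FF}(\mathcal{I})\subseteq\mathrm{FF}(\mathcal{I}')$ one has $\mathcal{I}^h=(\mathcal{I}')^h$; and $h\subseteq h''$ implies $\mathcal{I}^{h''}\subseteq\mathcal{I}^h$. It is also convenient to reformulate membership in the density filter via a choice function: using the axiom of choice, $X\in\mathrm{fil}(\mathcal{I})$ if and only if there is $e\colon\mathrm{FF}(\mathcal{I})\to\mathrm{FF}(\mathcal{I})$ with $e(h')\supseteq h'$ and $\mathcal{I}^{e(h')}\setminus X$ finite for every $h'\in\mathrm{FF}(\mathcal{I})$.

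For $(1)$ I would fix $\mathcal{I}\subseteq\mathcal{I}'$, a set $X\in\mathrm{fil}(\mathcal{I})$, and an arbitrary $h'\in\mathrm{FF}(\mathcal{I}')$, and split $h'=h'_0\cup h'_1$ according to whether the coordinate lies in $\mathcal{I}$, so that $h'_0\in\mathrm{FF}(\mathcal{I})$. Choosing $h\in\mathrm{FF}(\mathcal{I})$ with $h\supseteq h'_0$ and $\mathcal{I}^h\setminus X$ finite, set $\hat h:=h\cup h'_1$; since $\mathrm{dom}(h)\subseteq\mathcal{I}$ and $\mathrm{dom}(h'_1)\subseteq\mathcal{I}'\setminus\mathcal{I}$ are disjoint, $\hat h$ is a well-defined element of $\mathrm{FF}(\mathcal{I}')$ extending $h'$, and $(\mathcal{I}')^{\hat h}\subseteq\mathcal{I}^h$ gives that $(\mathcal{I}')^{\hat h}\setminus X$ is finite. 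As $h'$ was arbitrary, this yields $X\in\mathrm{fil}(\mathcal{I}')$. This splitting-and-regluing of $h'$ along $\mathcal{I}$ — so that the coordinates of $h'$ not appearing in $\mathcal{I}$ are carried along — is the one point in the whole lemma that asks for a moment's care.

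For $(3)$, the inclusion $\supseteq$ is $(1)$ applied to $\mathcal{J}\subseteq\mathcal{I}$. For $\subseteq$, I would fix $X\in\mathrm{fil}(\mathcal{I})$ together with a witnessing choice function $e$, and build a countable subfamily closed under $e$: put $\mathcal{J}_0=\emptyset$ and $\mathcal{J}_{n+1}=\mathcal{J}_n\cup\bigcup\{\mathrm{dom}(e(h'))\mid h'\in\mathrm{FF}(\mathcal{J}_n)\}$, each $\mathcal{J}_n$ being countable because $\mathrm{FF}(\mathcal{J}_n)$ is countable and every $\mathrm{dom}(e(h'))$ is finite. Then $\mathcal{J}:=\bigcup_n\mathcal{J}_n\in[\mathcal{I}]^{\leq\omega}$ satisfies $e[\mathrm{FF}(\mathcal{J})]\subseteq\mathrm{FF}(\mathcal{J})$, since every $h'\in\mathrm{FF}(\mathcal{J})$ already lies in some $\mathrm{FF}(\mathcal{J}_n)$; hence $e\restrict\mathrm{FF}(\mathcal{J})$ witnesses $X\in\mathrm{fil}(\mathcal{J})$, using $\mathcal{J}^{e(h')}=\mathcal{I}^{e(h')}$.

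Finally $(2)$ requires no new ingredient. The inclusion $\supseteq$ is $(1)$ applied to each $\mathcal{I}_\alpha\subseteq\bigcup_{\beta<\kappa}\mathcal{I}_\beta$. For $\subseteq$, given $X\in\mathrm{fil}(\bigcup_{\beta<\kappa}\mathcal{I}_\beta)$, part $(3)$ produces a countable $\mathcal{J}\subseteq\bigcup_{\beta<\kappa}\mathcal{I}_\beta$ with $X\in\mathrm{fil}(\mathcal{J})$; since $\kappa$ is regular and uncountable and the chain is increasing, $\mathcal{J}\subseteq\mathcal{I}_\alpha$ for some $\alpha<\kappa$, and then $X\in\mathrm{fil}(\mathcal{I}_\alpha)$ by $(1)$. (The continuity of the chain plays no role in this argument.) I do not expect any genuine obstacle here: the whole lemma is bookkeeping with finite conditions, with the mild subtlety of $(1)$ already flagged as the only spot deserving attention.
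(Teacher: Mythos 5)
Your proof is correct; the paper in fact states this lemma without proof (``The following are easily verified''), and your argument is exactly the routine verification being alluded to: the splitting-and-regluing of $h'\in\mathrm{FF}(\mathcal{I}')$ along $\mathcal{I}$ for $(1)$, closing a countable subfamily under a choice function of witnesses for $(3)$, and deducing $(2)$ from $(3)$, $(1)$ and regularity of $\kappa$. Nothing further is needed.
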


Our goal is to understand how densely maximal families behave with respect to forcing notions of the form $C(Y)$. As in the previous two sections, we assume from now on, unless explicitly stated otherwise, that $V = L[Y]$ for some $Y \subseteq \omega_1$ added by forcing over $L$. Our first more substantial lemma is the following. 
\begin{lemma}
Let $\mathcal{I}$ be an independent family. Let $H$ be $C(Y)$-generic. Then $\mathrm{fil}(\mathcal{I})^{V[H]}$ is generated by $\mathrm{fil}(\mathcal{I})^V$. In other words, for each $X\in\mathrm{fil}(\mathcal{I})^{V[H]}$ there is a $Z\in\mathrm{fil}(\mathcal{I})^V$ with $Z\subseteq X$.
\label{lemma2i} 
\end{lemma}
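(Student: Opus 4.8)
The plan is to show that any $X\in\mathrm{fil}(\mathcal{I})^{V[H]}$ has a ground-model subset in $\mathrm{fil}(\mathcal{I})^V$, exploiting the Sacks-like structure of $C(Y)$ together with the ``pass to a condition inside $\mathscr{A}_\delta$'' strategy developed in Lemma \ref{preprocessed} and Lemma \ref{mainlemmappoint}. Let $\dot X$ be a $C(Y)$-name and $p\in C(Y)$ a condition forcing $\dot X\in\mathrm{fil}(\mathcal{I})$. By Lemma \ref{preprocessed}, strengthen $p$ to a preprocessed $q\le p$ lying in some $\mathscr{A}_\delta$ with every branch of $q$ coding $Y$ below $\delta$, and (enlarging $M\prec L_{\omega_2}[Y]$ if necessary) arrange that $\mathcal I\cap\mathscr{A}_\delta$ is (a definable, correctly computed copy of) an initial segment of $\mathcal I$ — more precisely, using Lemma \ref{lemma1i}(3), it suffices to find for each countable $\mathcal J\subseteq\mathcal I$ lying in $\mathscr A_\delta$ a ground-model $Z\in\mathrm{fil}(\mathcal J)$ with $Z\subseteq X$, since $\mathrm{fil}(\mathcal I)=\bigcup\{\mathrm{fil}(\mathcal J):\mathcal J\in[\mathcal I]^{\le\omega}\}$ and $X$, being forced into $\mathrm{fil}(\mathcal I)^{V[H]}$, will lie in $\mathrm{fil}(\mathcal J')^{V[H]}$ for \emph{some} countable $\mathcal J'$ of the extension, hence (by a density argument choosing $q$ to decide such a $\mathcal J'$) for a countable $\mathcal J'\in V$. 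Thus it is enough to handle a fixed countable independent family $\mathcal J$.

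The heart of the argument is then a fusion construction carried out \emph{inside $\mathscr{A}_\delta$}, entirely analogous to the P-point proof. Enumerate $\mathrm{FF}(\mathcal J)$ as $\{h_n:n<\omega\}$ in $\mathscr{A}_\delta$ (this is a ground-model, indeed $\mathscr A_\delta$-definable, object since $\mathcal J\in\mathscr A_\delta$). Build a fusion sequence $q=q_0\ge_1 q_1\ge_2 q_2\ge\cdots$ in $\mathscr{A}_\delta$ so that for each $n$ and each $n$-splitting node $t$ of $q_{n+1}$, the condition $(q_{n+1})_t$ decides $\dot X\cap\check k$ for all $k$ up to some $k_n\nearrow\infty$, and moreover $(q_{n+1})_t$ forces that $\mathcal J^{h_n}\setminus\dot X$ omits a new element in the interval chosen — that is, we use that $q$ forces $\dot X\in\mathrm{fil}(\mathcal J)$, so for each $h_n$ we may find below each relevant node an extension forcing a further point of $\mathcal J^{h_n}$ into $\dot X$ at a controlled stage. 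Letting $q_\omega=\bigcap_n q_n$, which lies in $\mathscr{A}_\delta\cap C(Y)$ (so $|q_\omega|\le\delta$ and every branch codes $Y$ below $\delta$, whence $q_\omega\in C(Y)$), set $Z=\bigcap\{X_{q_\omega}^{q_\omega}{}_t\}$ along, say, the leftmost branch — concretely $Z$ is the ``leftmost interpretation'' $L_{q_\omega}(\dot X)$ of Proposition preceding, an element of $\mathscr A_\delta$ and hence of $V$. Then $q_\omega\Vdash \check Z\cap\dot X$ agrees cofinitely (indeed everywhere along that branch), so $q_\omega\Vdash\check Z\subseteq^*\dot X$; combined with the construction ensuring $\mathcal J^{h_n}\setminus Z$ is finite for every $n$, we get $Z\in\mathrm{fil}(\mathcal J)^V\subseteq\mathrm{fil}(\mathcal I)^V$ and $q_\omega\Vdash\check Z\subseteq^*\dot X$. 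A trivial adjustment (shrink $Z$ by a finite set, or intersect with an appropriate ground-model set of $\mathrm{fil}(\mathcal I)$) upgrades $\subseteq^*$ to $\subseteq$ if literal containment is wanted; since membership in a filter is $\subseteq^*$-upward-closed this is harmless. By density of such $q$ below an arbitrary condition forcing $\dot X\in\mathrm{fil}(\mathcal I)^{V[H]}$, and genericity of $H$, the lemma follows.

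The main obstacle I anticipate is bookkeeping the reduction to a countable subfamily \emph{correctly}: the name $\dot X$ is forced to lie in $\mathrm{fil}(\mathcal I)^{V[H]}=\bigcup\{\mathrm{fil}(\mathcal J):\mathcal J\in[\mathcal I]^{\le\omega}\cap V[H]\}$, but a priori the witnessing countable $\mathcal J$ is only in $V[H]$, not $V$. The fix is that $\mathcal I\subseteq L$ provably, so $[\mathcal I]^{\le\omega}\cap V[H]$ is covered (cofinally under $\subseteq$) by countable subfamilies in $V$ — here one uses the $\omega^\omega$-bounding / Sacks property of $C(Y)$ (Fact, Lemma 3.7 of \cite{FFK14}) to capture a countable set of indices in the ground model; and then by extending $q$ one may assume $q$ decides (a ground-model countable $\mathcal J\supseteq$) the relevant subfamily. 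Once this is in place the fusion is routine, mirroring the P-point argument almost verbatim. The only other delicate point is confirming that $L_{q_\omega}(\dot X)$ is genuinely an element of $\mathscr A_\delta$ and computed there — but this is exactly the content of the Proposition following Lemma \ref{preprocessed} together with the fact that the function $i$ of Lemma \ref{preprocessed} is recorded in $\mathscr A_\delta$ during the fusion, so no new idea is required.
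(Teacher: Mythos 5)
Your reduction to a countable subfamily $\mathcal{J}\subseteq\mathcal{I}$ in $V$ is fine and matches the paper's first step (the covering of the countable set of indices by a ground-model countable set is really an instance of properness of $C(Y)$ rather than of $\omega^\omega$-bounding, but that is cosmetic). The genuine gap is in the main fusion, in two places. First, the set $Z$ you extract is the leftmost interpretation $L_{q_\omega}(\dot{X})$, and an interpretation is only a guess computed along one ground-model branch: for each $n$ \emph{some} extension of $q_\omega$ forces $\dot{X}\cap n=Z\cap n$, but $q_\omega$ itself does not force $\check{Z}\subseteq^*\dot{X}$ --- generics running through the other branches of $q_\omega$ can evaluate $\dot{X}$ so as to differ from $Z$ infinitely often, and nothing in your construction prevents this; your parenthetical ``everywhere along that branch'' is exactly the confusion, since the generic branch is never the (ground-model) leftmost branch. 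In the P-point argument you are imitating, the set that ends up forced into $\dot{X}$ is not a leftmost interpretation but the union of the finite sets $a_n$ which \emph{every} node of the $n$-th tree in the fusion is made to force into $\dot{X}$. Second, even granting some $Z$ with $q_\omega\Vdash\check{Z}\subseteq^*\dot{X}$, your construction does not put $Z$ into $\mathrm{fil}(\mathcal{J})$: forcing ``a further point of $\mathcal{J}^{h_n}$ into $\dot{X}$'' at each stage only makes $Z\cap\mathcal{J}^{h}$ infinite for every $h$, i.e.\ $Z$ is positive for the density ideal, which is strictly weaker than membership in the filter; and the stronger goal you state, that $\mathcal{J}^{h_n}\setminus Z$ be finite for \emph{every} $h_n\in\mathrm{FF}(\mathcal{J})$, is in general unattainable for any $Z\subseteq^* X$, because $X\in\mathrm{fil}(\mathcal{J})$ only guarantees $\mathcal{J}^{h}\subseteq^* X$ for a dense set of $h$, not for all $h$.

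The paper's fusion is aimed at a different target, which repairs both defects at once. Working below a condition in a suitable $\mathscr{A}_\delta$, one builds the fusion $r_\infty$ together with a ground-model dense $K\subseteq\mathrm{FF}(\mathcal{J})$ (identified with $2^{<\omega}$) such that $r_\infty\Vdash\mathcal{J}^{\tau}\setminus\dot{X}=\emptyset$ for every $\tau\in K$: at stage $n$ the finitely many relevant splitting nodes are extended successively, each time lengthening a single $\tau_{n+1}\supseteq\sigma_{n+1}$ lying in the forced-dense set $\dot{D}=\{\tau:\mathcal{J}^{\tau}\setminus\dot{X}=\emptyset\}$, so that the \emph{entire} next tree forces $\tau_{n+1}\in\dot{D}$. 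Then $Z=\bigcup_{\tau\in K}\mathcal{J}^{\tau}$ lies in $\mathrm{fil}(\mathcal{J})^V$ because $K$ is dense, and $Z\subseteq X$ in $V[H]$ because the whole condition forces each $\mathcal{J}^{\tau}$ into $\dot{X}$. If you re-orient your fusion so that what is controlled at stage $n$ is an extension of the $n$-th element of $\mathrm{FF}(\mathcal{J})$, forced by the whole tree rather than interpreted along its leftmost branch, your argument becomes the paper's; as it stands, neither $q_\omega\Vdash\check{Z}\subseteq^*\dot{X}$ nor $Z\in\mathrm{fil}(\mathcal{J})$ is justified.
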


\begin{proof}[Proof of Lemma \ref{lemma2i}] 
Fix $p\in C(Y)$ such that $p\Vdash\dot{X}\in\mathrm{fil}(\mathcal{I})$.
\begin{claim} There exists a $q\leq p$ and a countable $\mathcal{J}\subseteq\mathcal{I}$ in $V$ such that $q\Vdash\dot{X}\in\mathrm{fil}(\mathcal{J})$.
\label{claim1i}
\end{claim}

\begin{proof}[Proof of Claim \ref{claim1i}] 
Fix in $V$ an enumeration $e:\mathcal{I}\to\mathrm{Ord}$; by Lemma \ref{lemma1i}, $p\forces \dot{X}\in\mathrm{fil}(e^{-1}(\dot{E}))$ for some countable $\dot{E}\subseteq\mathrm{Ord}$. As $C(Y)$ is proper, there exists a $q\leq p$ and countable $F$ in $V$ so that $q\Vdash\dot{E}\subseteq F$. Let $\mathcal{J}=e^{-1}(F)$; it follows that $q\Vdash\dot{X}\in\mathrm{fil}(\mathcal{J})$, as desired.
\end{proof}

Next, identifying $\mathrm{FF}(\mathcal{J})$ with $2^{<\omega}$ (since $\mathcal J$ is countable and $\mathrm{FF}(\mathcal J)$ is the set of finite functions on it), let $\dot{D}$ denote a $C(Y)$-name for the dense open subset of $2^{<\omega}$ defined by $q\Vdash ``\tau\in\dot{D}$ if and only if $\mathcal{J}^\tau\backslash\dot{X}=\emptyset$''.
\begin{claim}\label{claim2}
Let $\Delta$ denote the set of $p\in C(Y)$ for which there exists in $V$ a dense $K\subseteq 2^{<\omega}$ such that $p\Vdash K\subseteq\dot{D}$. Then $\Delta$ is dense below $q$.
\end{claim}
The above two claims together imply that for any $C(Y)$-generic $H$, there exists in $V$ a dense $K\subseteq 2^{<\omega}$ such that $V[H]\vDash K\subseteq \dot{D}_H$; such a $K$ then entails the following: 
\begin{itemize}
\item $Z:=\bigcup_{\tau\in K}\,\mathcal{J}^{\tau}$ is an element of $\mathrm{fil}(\mathcal{I})^V$, and
\item $V[H]\vDash Z\subseteq \dot{X}_H$.
\end{itemize}
The proof of Claim \ref{claim2} will therefore complete the proof of the lemma.

\begin{proof}[Proof of Claim \ref{claim2}] Let $\vec{\sigma}=\langle\sigma_n\mid n\in\omega\rangle$ recursively enumerate $2^{<\omega}$. Fix an $r\leq q$ and a countable elementary submodel $M$ of $L_{\omega_2}[Y]$ containing $r$ and $C(Y)$. Let $\delta=M\cap\omega_1$. We will extend $r$ to a condition $s\in C(Y)$ with $|s|=\delta$ which decides a dense subset of $\dot{D}$; instrumental for this purpose will be the existence of a sequence $\langle\delta_n\mid n\in\omega\rangle$ in $\mathcal{A}_\delta:=L_{\mu_\delta}[Y\cap\delta]$ which is cofinal in $\delta$. More precisely, within $\mathcal{A}_\delta$ we will inductively construct a fusion sequence $\vec{r}=\langle r_n\mid n\in\omega\rangle$ of elements of $C(Y)\cap M$ below $r$ together with a family $K=\{\tau_n\mid n\in\omega\}$ of elements of $2^{<\omega}$ such that $\sigma_n\subseteq \tau_n$ for all $n\in\omega$. The branches of the conditions $r_n$ will all code $Y$ below $\delta_n$; therefore, writing $r_\infty$ for the fusion of $\vec{r}$, the construction will ensure that $r_\infty = s$ is a condition of $C(Y)$ which forces ``$K\subseteq\dot{D}$'', as desired.

Begin by extending $r$ to an $r_0\in M$ and $\sigma_0$ to a $\tau_0$ such that $|r_0|\geq\delta_0$ and $r_0$ forces ``$\tau_0\in\dot{D}$''. More generally, our inductive assumption at stage $n$ will be that we have constructed a condition $r_n\in M$ whose branches all code $Y$ below $\delta_n$ and a $\tau_n\supseteq\sigma_n$ such $r_n\Vdash \tau_n\in\dot{D}$. We describe the passage to stage $n+1$; all conditions arising in this passage should be understood to be elements of $M$. First, choose an $r_n'\leq_n r_n$ such that $|r'_n|\geq\delta_{n+1}$. Let $\langle u_j\mid j\in 2^{n+1}\rangle$ enumerate $\mathrm{Split}_{n+1}(r'_n)$. Since $(r'_n)_{u_0}\Vdash ``\dot{D}\text{ is dense''}$ there exists an $s_0\leq (r'_n)_(u_0)$ and $\tau'_0\supseteq\sigma_{n+1}$ such that $s_0\Vdash \tau'_0\in\dot{D}$. Similarly, there will exist an $s_1\leq( r'_n)_{u_1}$ and a $\tau'_1\supseteq\tau_0'$ such that $s_1\Vdash\tau_1'\in\dot{D}$. Continuing in this fashion, we construct $\tau'_0\subseteq\dots\subseteq \tau'_j\subseteq\dots\subseteq \tau'_{2^{n+1}-1}$ and conditions $s_j\leq (r'_n)_{u_j}$ such that $s_j$ forces ``$\tau'_j\in\dot{D}$'' for each $j\leq 2^{n+1}-1$. Let $\tau_{n+1}=\tau'_{2^{n+1}-1}$ and let $r_{n+1}=\bigcup_{j<2^{n+1}}s_j$. Observe that we have conserved our induction hypothesis; letting $r_\infty$ be the fusion of the conditions $r_n$ then completes the argument in the manner described.
\end{proof}
\end{proof}

Recall the following preservation result, due to Shelah.
\begin{theorem}[Shelah, See Conclusion 2.15D, pg. 305 of \cite{PIP}, see also \cite{FM19}, Theorem 27]
If $\langle \Q_\alpha, \dot{\mathbb R}_\alpha \; | \; \alpha < \gamma\rangle$ is a countable support iteration of proper forcing notions so that for all $\alpha < \gamma$ $\forces_{\alpha}$ ``$\dot{\mathbb R}_\alpha$ is proper and any new dense open subset of $2^{<\omega}$ contains an old one" then $\Q_\gamma$ has the property that any new open sense subset of $2^{<\omega}$ contains an old one.
\label{DOiterationthm}
\end{theorem}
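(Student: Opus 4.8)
The plan is to read this off as an instance of Shelah's general preservation machinery for countable support (CS) iterations, proceeding by induction on the length $\gamma$ of the iteration. It is convenient first to pass to the equivalent \emph{local} formulation: say that a proper forcing $\Q$ is \emph{good} if for every $\Q$-name $\dot D$ for a dense open subset of $2^{<\omega}$ and every $q\in\Q$ there are $q'\leq q$ and a dense open $K\subseteq 2^{<\omega}$ lying in the ground model with $q'\Vdash\check K\subseteq\dot D$. A routine density argument (of exactly the kind carried out for $C(Y)$ in the proof of Lemma \ref{lemma2i}, Claim \ref{claim2}) shows that $\Q$ is good precisely when every new dense open subset of $2^{<\omega}$ contains an old one. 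We prove by induction on $\gamma$ that every $\Q_\gamma$ as in the hypothesis is good, using throughout that $\Q_\gamma$ is itself proper by the usual Proper Iteration Theorem. The base case $\gamma\leq 1$ is trivial (for $\gamma=1$ it is the hypothesis on the single iterand).

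\emph{Successor step, $\gamma=\alpha+1$.} Here $\Q_\gamma=\Q_\alpha\ast\dot{\mathbb R}_\alpha$ with $\Q_\alpha$ good by the induction hypothesis and $\Vdash_\alpha$ ``$\dot{\mathbb R}_\alpha$ is good'' by assumption. Given a $\Q_\gamma$-name $\dot D$ and a condition $(p,\dot r)$: since goodness of $\dot{\mathbb R}_\alpha$ holds in $V^{\Q_\alpha}$, the maximal principle supplies $\Q_\alpha$-names $\dot r'$ and $\dot K$ such that $p\Vdash_\alpha$ ``$\dot r'\leq\dot r$ in $\dot{\mathbb R}_\alpha$, $\dot K$ is a dense open subset of $2^{<\omega}$, and $\dot r'\Vdash_{\dot{\mathbb R}_\alpha}\dot K\subseteq\dot D$''. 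Now $\dot K$ is a $\Q_\alpha$-name for a dense open subset of $2^{<\omega}$, so goodness of $\Q_\alpha$ yields $p'\leq p$ and a ground-model dense open $K$ with $p'\Vdash_\alpha\check K\subseteq\dot K$. Then $(p',\dot r')\leq(p,\dot r)$ and $(p',\dot r')\Vdash_\gamma\check K\subseteq\dot K\subseteq\dot D$, as required.

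\emph{Limit step, $\cf(\gamma)>\omega$.} A dense open subset of $2^{<\omega}$ is coded by a real, so this case reduces to the induction hypothesis via properness alone. Given $\dot D$ and $p$, pick a countable $M\prec H_\theta$ with $\dot D,p,\Q_\gamma,\gamma\in M$ and an $(M,\Q_\gamma)$-generic $q\leq p$. Every condition of $\Q_\gamma$ lying in $M$ has support bounded by $\beta:=\sup(M\cap\gamma)$, and $\beta<\gamma$ because $\cf(\gamma)>\omega$; hence, below $q$, the interpretation of $\dot D$ is determined already by $\dot G\restriction\beta$, so there is a $\Q_\beta$-name $\dot D'$ with $q\Vdash\dot D=\dot D'$. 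Applying the induction hypothesis to $\Q_\beta$ produces $q''\leq q\restriction\beta$ and a ground-model dense open $K$ with $q''\Vdash_\beta\check K\subseteq\dot D'$; the condition equal to $q''$ on $\beta$ and to $q$ on $[\beta,\gamma)$ then extends $p$ and forces $\check K\subseteq\dot D$.

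\emph{Limit step, $\cf(\gamma)=\omega$.} This is the substantive case and the main obstacle. Writing $\gamma=\sup_n\gamma_n$ with $\langle\gamma_n\rangle$ increasing, each block $\Q_{[\gamma_n,\gamma_{n+1})}$ is (forced to be) a CS iteration of proper good forcings of length $<\gamma$, hence good by the induction hypothesis; so it suffices to treat a CS iteration of length $\omega$ of proper good forcings. For such a $\Q_\omega$, fix $\dot D$, $p$, a countable $M\prec H_\theta$ containing all relevant objects, and an enumeration $\langle\sigma_k\mid k<\omega\rangle$ of $2^{<\omega}$. One runs the master-condition recursion that witnesses properness of the CS limit (the cofinality-$\omega$ case of the Proper Iteration Theorem), building $q\restriction n$ by recursion on $n$, and interleaves with it a bookkeeping that at step $k$ commits to a node $\tau_k\supseteq\sigma_k$ with (the final) $q$ forcing $\check\tau_k\in\dot D$; the ground-model set $K$ defined as the upward closure of $\{\tau_k\mid k<\omega\}$ is then dense open, and $q\Vdash\check K\subseteq\dot D$ because $\dot D$ is open. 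The delicate point — and the heart of the argument — is that deciding the $\tau_k$'s forces commitments at low coordinates which threaten the coherence needed for $q$ to be a condition; what reconciles the two demands is precisely that every tail of the iteration is again good (induction hypothesis), so the master condition chosen at each coordinate can be taken to handle simultaneously genericity over $M$ and the $\dot D$-commitments coming from higher coordinates. This is exactly the shape of Shelah's general preservation scheme — structurally identical to the preservation of, e.g., P-points or the Laver property along CS iterations — and the bookkeeping together with the single-master-condition verification are carried out as in \cite{PIP} and \cite[Theorem 27]{FM19}. Combined with the successor and $\cf(\gamma)>\omega$ cases, this completes the induction.
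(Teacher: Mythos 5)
You are proving a statement that the paper itself does not prove: it is quoted as Shelah's theorem (Conclusion 2.15D of \cite{PIP}; see also \cite[Theorem 27]{FM19}) and used as a black box, so there is no internal argument to compare against. Within your attempt, the successor step is correct, and the uncountable-cofinality step is correct modulo routine patching: conditions in $M$ have support contained in $\beta=\sup(M\cap\gamma)$ because their supports are countable \emph{elements} of $M$, and the $\Q_\beta$-name $\dot D'$ you extract is not forced by $q\restriction\beta$ alone to be dense open, so before invoking goodness of $\Q_\beta$ you should replace it by a name that defaults to $2^{<\omega}$ when density fails (density and openness of a subset of $2^{<\omega}$ are absolute, so this is harmless below the recombined condition).

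The genuine gap is the countable-cofinality case, which is the entire content of the theorem, and your proposed mechanism for it is circular. After reducing to a length-$\omega$ iteration of good proper blocks, every tail $\Q_{[n,\omega)}$ is again a length-$\omega$ CS iteration, so the induction hypothesis (which is an induction on the ordinal length $\gamma$) does \emph{not} apply to the tails; yet your reconciliation of the master-condition recursion with the $\dot D$-bookkeeping rests precisely on ``every tail of the iteration is again good (induction hypothesis).'' Moreover, at stage $n$ of the recursion one controls only the first $n$ coordinates, and no such partial condition can in general be guaranteed to force $\tau_k\in\dot D$ for a $\Q_\omega$-name $\dot D$: choosing the $\tau_k$ requires anticipating the tail, and overcoming exactly this is the point of Shelah's preservation machinery, which proves a strengthened, local statement (involving interpretations, i.e.\ finite approximations of $\dot D$ decided over countable elementary submodels) by induction along the iteration, applies the single-step hypothesis inside each $V^{\Q_n}$ to reinterpreted names, and fuses at the end. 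Your write-up defers precisely this step to \cite{PIP} and \cite[Theorem 27]{FM19}, which is what the paper already does by citing the theorem; as it stands, the crucial case is not proved.
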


We now have the following lemma.
\begin{lemma} 
Let $\mathcal{I}$ be an independent family. For every $\alpha\leq \omega_2$, the filter $\mathrm{fil}(\mathcal{I})^{V^{\mathbb{P}_\alpha}}$ is generated by $\mathrm{fil}(\mathcal{I})^V$. In other words,
$V^{\mathbb{P}_\alpha}\vDash \,\forall X\in\mathrm{fil}(\mathcal{I})\,\exists Z\in \mathrm{fil}(\mathcal{I})\cap V\text{ such that }Z\subseteq X\,.$
\label{preservation1}
\end{lemma}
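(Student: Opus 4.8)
The plan is to obtain this from the single-step behaviour that already did the real work in Lemma~\ref{lemma2i}, together with Shelah's iteration theorem, Theorem~\ref{DOiterationthm}. Note that the proof of Lemma~\ref{lemma2i} falls into two halves. Its second half --- taking a name $\dot X$ forced into $\mathrm{fil}(\mathcal I)$, finding by properness a countable $\mathcal J \subseteq \mathcal I$ in $V$ with $\dot X \in \mathrm{fil}(\mathcal J)$ (Claim~\ref{claim1i}), identifying $\mathrm{FF}(\mathcal J)$ with $2^{<\omega}$, passing to the forced-dense-open set $\dot D = \{\tau : \mathcal J^\tau \subseteq \dot X\}$, extracting a ground-model dense $K \subseteq \dot D$, and setting $Z = \bigcup_{\tau \in K}\mathcal J^\tau$ --- uses nothing about $C(Y)$ beyond properness and the property ``every new dense open subset of $2^{<\omega}$ contains an old one''. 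Its first half --- Claim~\ref{claim2}, the fusion argument establishing that property for $C(Y)$ --- is, on inspection, stated more narrowly than it is proved: the construction there uses only that $\dot D$ is forced to be a dense open subset of $2^{<\omega}$, never that it arises from an independent family, so Claim~\ref{claim2} in fact shows that $C(Y)$ preserves dense open subsets of $2^{<\omega}$ in the sense of Theorem~\ref{DOiterationthm} for \emph{every} such name $\dot D$.

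The first step, then, is to record that every atomic iterand occurring in the definition of $\mathbb{P}_{\omega_2}$ has the property that any new dense open subset of $2^{<\omega}$ contains an old one: for iterands of the form $C(Y_\alpha)$ this is (the proof of) Claim~\ref{claim2}, applied inside the relevant intermediate model $L[Y_\alpha]$; for the club-shooting iterands $Q(S_i)$ (which are $\omega$-distributive) and the localization posets $\mathcal L(\varphi_\alpha)$ (which have $\sigma$-closed dense subsets) it is trivial, since these add no new subsets of $\omega$; and for $\dot{\Q}^0_\alpha$ it holds under the standing hypothesis we impose on that iterand in our applications (it is trivial in the model for Main Theorem~\ref{mainthm1}, and in the constellation models it is chosen with the Sacks property, where the statement is classical). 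Viewing $\mathbb{P}_{\omega_2}$ as a single countable support iteration of these atomic constituents and applying Theorem~\ref{DOiterationthm} --- which, like the other countable support iteration theorems invoked in this paper, applies \emph{mutatis mutandis} to $S$-proper iterations --- we obtain, by induction on $\alpha$, that $\mathbb{P}_\alpha$ has the same property for every $\alpha \le \omega_2$.

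It remains to run the second half of the proof of Lemma~\ref{lemma2i} with $\mathbb{P}_\alpha$ in place of $C(Y)$. Fix $p \in \mathbb{P}_\alpha$ with $p \Vdash \dot X \in \mathrm{fil}(\mathcal I)$. Using an enumeration $e : \mathcal I \to \mathrm{Ord}$ in $V$, Lemma~\ref{lemma1i}(3) and the properness of $\mathbb{P}_\alpha$, obtain $q \le p$ and a countable $\mathcal J \subseteq \mathcal I$ in $V$ with $q \Vdash \dot X \in \mathrm{fil}(\mathcal J)$ exactly as in Claim~\ref{claim1i}; enlarging $\mathcal J$ by countably many elements of $\mathcal I$ if necessary --- which by Lemma~\ref{lemma1i}(1) does not disturb this --- we may assume $\mathcal J$ is infinite. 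Identifying $\mathrm{FF}(\mathcal J)$ with $2^{<\omega}$, and reading ``finite'' as ``empty'' in the definition of $\mathrm{fil}$ (legitimate since $\mathcal J$ is infinite), $q$ forces $\dot D := \{\tau \in 2^{<\omega} : \mathcal J^\tau \subseteq \dot X\}$ to be a dense open subset of $2^{<\omega}$. By the property just established, the set of $p' \le q$ admitting a ground-model dense $K \subseteq 2^{<\omega}$ with $p' \Vdash \check K \subseteq \dot D$ is dense below $q$; so in any generic $G \ni q$ there is such a $K \in V$ with $V[G] \vDash K \subseteq \dot D_G$. Then $Z := \bigcup_{\tau \in K}\mathcal J^\tau$ lies in $V$; density of $K$ yields $Z \in \mathrm{fil}(\mathcal J)^V \subseteq \mathrm{fil}(\mathcal I)^V$ (Lemma~\ref{lemma1i}(1)), and $K \subseteq \dot D_G$ yields $V[G] \vDash Z \subseteq \dot X_G$, as required.

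I expect the only point requiring genuine care to be the bookkeeping at the level of the iterands: confirming that the fusion argument of Claim~\ref{claim2} is indifferent to the provenance of $\dot D$ --- so that it really gives the unrestricted preservation of dense open subsets of $2^{<\omega}$ by $C(Y)$ --- and checking, for each application, that whatever poset is plugged in for $\dot{\Q}^0_\alpha$ also preserves dense open subsets of $2^{<\omega}$. Everything else is a transcription of the proof of Lemma~\ref{lemma2i} and an invocation of Theorem~\ref{DOiterationthm}.
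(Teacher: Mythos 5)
Your proposal is correct and follows essentially the paper's route: the paper's proof of Lemma \ref{preservation1} is literally ``immediate from Lemma \ref{lemma2i} and Theorem \ref{DOiterationthm}'', and your unpacking---that the fusion argument of Claim \ref{claim2} establishes the dense-open-preservation property of $C(Y)$ for arbitrary names, that the remaining iterands add no reals (or must be assumed to have the property), and that the second half of the proof of Lemma \ref{lemma2i} then transfers verbatim to the $S$-proper iteration $\P_\alpha$---is exactly what that one-line proof intends. The only slip is the parenthetical claim that the woven-in posets of the constellation models have the Sacks property (Miller forcing does not); what matters, and what you correctly isolate as the needed hypothesis, is that $\dot{\Q}^0_\alpha$ preserves dense open subsets of $2^{<\omega}$ in any application of the lemma.
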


\begin{proof} 
Immediate from Lemma \ref{lemma2i} and Theorem \ref{DOiterationthm}.
\end{proof}

Given all of these preliminaries we arrive now to our main preservation result for independent families. First, recall from \cite[Lemma 3.2]{Sh92} the following result:
\begin{theorem}\label{Shelah_preservation}
Assume \textsf{CH}. Let $\delta$ be a limit ordinal and let $\langle\mathbb{P}_\alpha,\dot{\mathbb{Q}}_\beta\mid\alpha\leq\delta,\beta<\delta\rangle$ be a countable support iteration of ${^\omega}\omega$-bounding proper posets. Let $\mathcal{F}\subseteq P(\omega)$ be a Ramsey set and let $\mathcal{H}$ be a subset of $P(\omega)\backslash\langle\mathcal{F}\rangle_{\mathrm{up}}$. If $V^{\mathbb{P}_\alpha}\vDash P(\omega)=\langle\mathcal{F}\rangle_{\mathrm{up}}\cup\langle\mathcal{H}\rangle_{\mathrm{dn}}$ for all $\alpha<\delta$ then $V^{\mathbb{P}_\delta}\vDash P(\omega)=\langle\mathcal{F}\rangle_{\mathrm{up}}\cup\langle\mathcal{H}\rangle_{\mathrm{dn}}$ as well.
\end{theorem}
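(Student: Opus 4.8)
The plan is to prove Theorem~\ref{Shelah_preservation} by a limit-stage fusion argument in the spirit of the proof of Lemma~\ref{lemma2i}, with the Ramsey-ness of $\mathcal{F}$ supplying the one genuinely new ingredient. First I would dispose of the case $\cf(\delta)>\omega$: a countable support iteration of proper posets adds no reals at a limit of uncountable cofinality, so $P(\omega)^{V^{\mathbb{P}_\delta}}=\bigcup_{\alpha<\delta}P(\omega)^{V^{\mathbb{P}_\alpha}}$, and since $\mathcal{F}$ and $\mathcal{H}$ are fixed in $V$ the closures $\langle\mathcal{F}\rangle_{\mathrm{up}}$ and $\langle\mathcal{H}\rangle_{\mathrm{dn}}$ only grow with the model; the conclusion is then immediate from the hypothesis. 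So assume $\cf(\delta)=\omega$ and fix an increasing cofinal $\langle\delta_n\mid n\in\omega\rangle$. It suffices to prove the density claim: for every $\mathbb{P}_\delta$-name $\dot{X}$ for a subset of $\omega$ and every $p\in\mathbb{P}_\delta$, either some $q\leq p$ forces $\dot{X}\in\langle\mathcal{H}\rangle_{\mathrm{dn}}$, or there are $q\leq p$ and $F\in\mathcal{F}$ with $q\Vdash\check{F}\subseteq\dot{X}$. Indeed, a $\mathbb{P}_\delta$-generic then meets the union of these two families (which is dense by the claim), and as every subset of $\omega$ in $V^{\mathbb{P}_\delta}$ has a name we conclude $V^{\mathbb{P}_\delta}\vDash P(\omega)=\langle\mathcal{F}\rangle_{\mathrm{up}}\cup\langle\mathcal{H}\rangle_{\mathrm{dn}}$.

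To prove the claim, suppose no $q\leq p$ forces $\dot{X}\in\langle\mathcal{H}\rangle_{\mathrm{dn}}$ and aim at the second alternative. Fix a countable $N\prec H_\chi$ containing $p$, $\dot{X}$, $\langle\mathbb{P}_\alpha\mid\alpha\leq\delta\rangle$, $\langle\delta_n\rangle$, $\mathcal{F}$, $\mathcal{H}$, together with an $\omega$-length enumeration (available by \textsf{CH}) of the dense sets the eventual master condition must meet. Inside $N$, using the ${}^\omega\omega$-bounding hypothesis to ensure convergence in the manner of the standard preservation theorems for ${}^\omega\omega$-bounding iterations, build a decreasing sequence $p=p_0\geq p_1\geq\cdots$ of conditions of $\mathbb{P}_\delta$ with $p_n\restriction\delta_n$ an $(N,\mathbb{P}_{\delta_n})$-master condition, converging to a single $(N,\mathbb{P}_\delta)$-master condition $q\leq p$, and with $p_{n+1}$ discharging the $n$-th bookkeeping task. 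The substantive choice at step $n$ is this: in $V^{\mathbb{P}_{\delta_n}}$, below $p_n\restriction\delta_n$, form the outer hull $X^*_n:=\{m\mid$ the $[\delta_n,\delta)$-part of the condition does not force $\check m\notin\dot{X}\}$, an element of $V^{\mathbb{P}_{\delta_n}}$. Since $p_n$ forces $\dot{X}\subseteq X^*_n$, were $X^*_n$ forced into $\langle\mathcal{H}\rangle_{\mathrm{dn}}$ then so would $\dot{X}$ be, against our assumption; so, refining the $\mathbb{P}_{\delta_n}$-part of the condition if necessary, we may take $X^*_n\notin\langle\mathcal{H}\rangle_{\mathrm{dn}}$, and then the hypothesis of the theorem applied at $\delta_n<\delta$ yields an $F_n\in\mathcal{F}$ — a ground-model set, as $\mathcal{F}\in V$ — with $F_n\subseteq X^*_n$; in particular $X^*_n\in\mathcal{F}$. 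Now pick $m_{n+1}\in X^*_n$ with $m_{n+1}>m_n$ and extend the $[\delta_n,\delta)$-part of the condition so as to force $\check m_{n+1}\in\dot{X}$ — possible precisely because $m_{n+1}\in X^*_n$ — producing $p_{n+1}$; since $p_{n+1}\leq p_n$, the earlier commitments $\check m_j\in\dot{X}$ persist. With $F:=\{m_n\mid n\geq1\}$, the fusion $q$ then forces $\check{F}\subseteq\dot{X}$.

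The step I expect to be the main obstacle is to make this construction actually run, i.e.\ to guarantee $F\in\mathcal{F}$ while keeping all the commitments coherent. That $F$ can be driven into $\mathcal{F}$ is exactly what the Ramsey-ness of $\mathcal{F}$ buys, via the recursive selectivity of a Ramsey ultrafilter: a set built by choosing its $(n{+}1)$-st element inside a member of $\mathcal{F}$ that may depend on the first $n$ elements already chosen can always be taken to lie in $\mathcal{F}$; here the relevant members are the sets $X^*_n\in\mathcal{F}$, whose identity depends on $m_1,\dots,m_n$ through $p_n$. The delicate point is the orchestration: the refinements of the $\mathbb{P}_{\delta_n}$-part of the condition needed to pin down $F_n$ must be interleaved with the master-condition construction without destroying convergence, and the choice of $m_{n+1}$ must be made compatibly with the selectivity recursion that secures $F\in\mathcal{F}$; \textsf{CH} is used only to arrange all of this as a single $\omega$-length recursion inside $N$. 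Granting the density claim, the theorem follows, and in the paper's iteration it is combined, at successor stages, with the single-step fact that $C(Y)$ preserves $P(\omega)=\langle\mathcal{F}\rangle_{\mathrm{up}}\cup\langle\mathcal{H}\rangle_{\mathrm{dn}}$ (proved much as Lemma~\ref{lemma2i}) and with the corresponding property of the ${}^\omega\omega$-bounding iterands $\dot{\Q}^0_\alpha$, exactly as Lemma~\ref{preservation1} was deduced from Lemma~\ref{lemma2i} and Theorem~\ref{DOiterationthm}.
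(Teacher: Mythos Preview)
The paper does not give its own proof of this theorem: it is quoted verbatim as a known preservation result of Shelah, with references to Conclusion~2.15D of \cite{PIP} and Theorem~27 of \cite{FM19}, and is then applied as a black box in the limit step of the proof of Theorem~\ref{mainthmIND}. There is therefore no paper-proof to compare your proposal against.

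That said, your sketch is in the right spirit and correctly isolates the genuine obstacle --- the ``orchestration'' of a fusion with a diagonal selection secured by the Ramsey property of $\mathcal{F}$. A couple of points are loose. First, $X^*_n$ is computed in $V^{\mathbb{P}_{\delta_n}}$, not in $V$, so the clause ``in particular $X^*_n\in\mathcal{F}$'' is not literally correct; what the hypothesis gives is some ground-model $F_n\in\mathcal{F}$ with $F_n\subseteq X^*_n$, and it is to these $F_n$ that the selectivity must be applied. Second, the selectivity step has to be run in the ground model, against a \emph{tree} of possible finite condition-sequences (and their associated $F_n$'s), not along a single branch chosen on the fly in successive generic extensions; this is where \textsf{CH} is actually used, to keep that tree small enough for the Ramsey property of $\mathcal{F}$ to produce a single $F\in\mathcal{F}$ threading all relevant branches. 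With these repairs your plan matches the shape of Shelah's argument, but making it precise is exactly the content of the cited result, and the paper is right to defer to the reference rather than reproduce it.
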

\begin{remark}
This holds even with ``proper'' weakened to ``$S$-proper''; in particular, it holds for $\P_{\omega_2}$.
\end{remark}

Using this theorem we are almost done with the proof of Theorem \ref{mainthmIND}. Indeed this will cover the limit step of the preservation theorem, therefore we need to consider the successor step. 
\begin{lemma}
Assume $V=L[Y]$ with $Y \subseteq  \omega_1$ added by a forcing in $L$. If in $V$ we have that $\mathcal I$ is densely maximal then $\forces_{C(Y)}$ ``$\check{\mathcal I}$ is densely maximal".
\label{INDCYlemma}
\end{lemma}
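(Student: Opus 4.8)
The plan is to follow the same template as the proofs of Lemma~\ref{mainlemmappoint} and Lemma~\ref{mainlemmaMAD}: reduce dense maximality of $\mathcal I$ in $V^{C(Y)}$ to a statement that can be witnessed inside some $\mathscr A_\delta$, and then exploit the key feature of $C(Y)$ that a perfect subtree of a condition which lives in $\mathscr A_\delta$ and whose branches all code $Y$ below $\delta$ is automatically a condition. By Lemma~\ref{lemma0i}, $\mathcal I$ is densely maximal iff $P(\omega)=\mathrm{fil}(\mathcal I)\cup\langle\omega\backslash\mathcal I^h\mid h\in\mathrm{FF}(\mathcal I)\rangle_{\mathrm{dn}}$; so fixing $p\in C(Y)$ and a name $\dot X$ with $p\forces\dot X\subseteq\omega$, I must find $q\leq p$ forcing either $\dot X\in\mathrm{fil}(\mathcal I)$ or that $\dot X\subseteq\omega\backslash\mathcal I^h$ for some $h\in\mathrm{FF}(\mathcal I)$ — equivalently, that $\dot X\cap\mathcal I^h=\emptyset$. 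Since $\mathcal I$ is $\Sigma^1_2$ and provably contained in $L$, it (and its density filter/ideal) are correctly computed by every $\mathscr A_i$, and dense maximality is downward absolute in the way used in the previous sections; this is the usual ``enough elementarity of the $\mu_i$'' remark.

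First I would apply Lemma~\ref{preprocessed} to get a $\delta<\omega_1$ and a preprocessed $q_0\leq p$ with $q_0\in\mathscr A_\delta$, all branches of $q_0$ coding $Y$ below $\delta$, and with the function $i\colon\mathrm{Split}(q_0)\to[\omega]^{<\omega}$ in $\mathscr A_\delta$, so that the leftmost interpretation $L_{(q_0)_t}(\dot X)$ is definable in $\mathscr A_\delta$ uniformly in $t$. Now there is a dichotomy, argued inside $\mathscr A_\delta$: either (Case 1) there is a node $t\in q_0$ such that for every $s\in(q_0)_t$ the set $L_{(q_0)_s}(\dot X)$ lies in $\mathrm{fil}(\mathcal I)$, or (Case 2) for every $t\in q_0$ there is $s\in(q_0)_t$ with $L_{(q_0)_s}(\dot X)\notin\mathrm{fil}(\mathcal I)$, and then (since $\mathrm{fil}(\mathcal I)$ is an ultrafilter, by the densely-maximal-equals-ultrafilter observation preceding Lemma~\ref{lemma0i}) $\omega\backslash L_{(q_0)_s}(\dot X)\in\mathrm{id}(\mathcal I)$, i.e.\ there is $h\in\mathrm{FF}(\mathcal I)$ with $\mathcal I^h\subseteq L_{(q_0)_s}(\dot X)$, so $\mathcal I^h$ avoids a cofinite chunk of the complement. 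Both the dichotomy and the choice functions ($s$ least lexicographically, $h$ least in the global well-order) are definable in $\mathscr A_\delta$ by elementarity.

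In Case 1, pass to $q_0'=(q_0)_t$ and run exactly the P-point-style fusion of Lemma~\ref{mainlemmappoint} using the $\mathrm{fil}(\mathcal I)$-positive sets in place of $\mathscr U$ and the fact that $\mathrm{fil}(\mathcal I)$ is Ramsey (this is where selectivity, not just dense maximality, enters — but for \emph{this} lemma I only need dense maximality, so here I instead use that $\mathrm{fil}(\mathcal I)$ is an ultrafilter: build a decreasing fusion sequence $q_0'\geq_1 q_1\geq_2\dots$ inside $\mathscr A_\delta$, at step $n$ intersecting the finitely many leftmost interpretations attached to $\mathrm{Split}_n$, thinning the leftmost branches past $\delta_n$, and arranging $|q_n|\geq\delta_n$); the fusion $q_\infty=\bigcap_n q_n\in\mathscr A_\delta$ has $|q_\infty|\leq\delta$ and all branches coding $Y$ below $\delta$, hence $q_\infty\in C(Y)$, and $q_\infty\forces\dot X\supseteq\check Z$ for a $Z$ obtained as a union along the construction which lies in $\mathrm{fil}(\mathcal I)^V\subseteq\mathrm{fil}(\mathcal I)^{V[H]}$ — wait, this gives $\dot X\in\mathrm{fil}(\mathcal I)$ directly since $\mathrm{fil}(\mathcal I)$ is generated by its ground-model part by Lemma~\ref{lemma2i}. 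In Case 2 the symmetric construction, thinning against $\mathcal I^h$ for the chosen $h$, yields $q_\infty\forces\dot X\cap\check{\mathcal I^h}=\emptyset$, i.e.\ $\dot X\subseteq\omega\backslash\mathcal I^h$, which by Lemma~\ref{lemma0i} is all that is needed. The main obstacle is bookkeeping the definability in $\mathscr A_\delta$ — ensuring the dichotomy, the interpretations, and the choice of $h$ are all $\mathscr A_\delta$-definable so that the whole fusion sequence (not just each finite stage) is an element of $\mathscr A_\delta$; this is exactly the ``enough elementarity'' point and is handled just as in the two previous sections. Actually, on reflection, the cleanest route avoids re-running the fusion: in Case~1 one can simply cite Lemma~\ref{lemma2i}/Lemma~\ref{preservation1} to conclude $\dot X\in\mathrm{fil}(\mathcal I)$ once one knows $p$ forces $\dot X\in\mathrm{fil}(\mathcal I)^{V[H]}$, so the real content is Case~2, where the ultrafilter property of $\mathrm{fil}(\mathcal I)$ forces the complement of $\dot X$'s interpretation into $\mathrm{id}(\mathcal I)$ along a subtree, and a single $\mathscr A_\delta$-internal fusion seals $\dot X\cap\mathcal I^h=\emptyset$.
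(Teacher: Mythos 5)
Your overall template (preprocessed conditions, leftmost interpretations, a Case~1/Case~2 dichotomy inside $\mathscr A_\delta$, fusion) is reasonable, but there is a genuine gap: you explicitly claim that for this lemma dense maximality alone -- i.e.\ $\mathrm{fil}(\mathcal I)$ being an ultrafilter -- suffices, and you discard the Ramsey structure of $\mathrm{fil}(\mathcal I)$. That is precisely what your fusions need. In Case~1 the set $Z=\bigcup_n a_n$ you force into $\dot X$ is a union of \emph{finite} pieces of filter sets; an ultrafilter gives no reason whatsoever for such a union to lie in the filter -- by the Galvin--Shelah theorem quoted in Section~3, this is exactly the P-point property, and for a non-P-point ultrafilter player~I has a winning strategy, so the construction can simply fail. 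The paper's proof of Lemma~\ref{INDCYlemma} leans on this in an essential way: it uses that $\mathrm{fil}(\mathcal I)$ is a P-point to pseudo-intersect the outer hulls $Z_t$ (Claims~\ref{keyclaimIND} and~\ref{lemmahullsinfilter}) and that it is a Q-point to extract the ``fast'' set $C$ whose enumeration drives the fusion forcing $\check C\subseteq\dot X$; the Ramseyness of $\mathrm{fil}(\mathcal I)$ in $L[Y]$ is established in the remark immediately before the proof and cannot be dropped. Your closing ``cleanest route'' is moreover circular: Lemma~\ref{lemma2i} only tells you that \emph{if} $\dot X$ is forced into $\mathrm{fil}(\mathcal I)^{V[H]}$ then it contains a ground-model filter set; Case~1's hypothesis (that the leftmost interpretations below some node are in the filter) does not give you that $p$ forces $\dot X\in\mathrm{fil}(\mathcal I)^{V[H]}$ -- that is exactly what has to be proved.

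Case~2 has two further problems. First, the direction is reversed: from $L_{(q_0)_s}(\dot X)\notin\mathrm{fil}(\mathcal I)$ and dense maximality you get $L_{(q_0)_s}(\dot X)\in\mathrm{id}(\mathcal I)$, i.e.\ for every $h'$ some $h\supseteq h'$ with $\mathcal I^h\cap L_{(q_0)_s}(\dot X)$ finite; the statement ``$\omega\setminus L_{(q_0)_s}(\dot X)\in\mathrm{id}(\mathcal I)$, so $\mathcal I^h\subseteq L_{(q_0)_s}(\dot X)$'' is the negation of your case hypothesis. Second, and more seriously, the witnessing $h$ depends on the node $s$, and elements of $\mathrm{FF}(\mathcal I)$ are finite partial functions, so there is no single $h$ against which you can ``thin'' along the infinitely many splitting nodes of a fusion; ``a single $\mathscr A_\delta$-internal fusion seals $\dot X\cap\mathcal I^h=\emptyset$'' has no mechanism behind it. A correct repair would be to run the P-point-game argument of Lemma~\ref{mainlemmappoint} (Case~2) to get a single ground-model $B\in\mathrm{fil}(\mathcal I)\cap\mathscr A_\delta$ and a fusion forcing $\dot X\cap\check B=\emptyset$, and only then use dense maximality \emph{in $V$} (via Lemma~\ref{lemma0i}) to find one $h$ with $(\omega\setminus B)\cap\mathcal I^h=\emptyset$, hence $\dot X\cap\mathcal I^h=\emptyset$ -- but this again uses the P-point property you renounced. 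For comparison, the paper avoids the dichotomy altogether: it assumes for contradiction that $p$ forces $\dot X\notin\mathrm{fil}(\mathcal I)$ while meeting every $\mathcal I^h$ infinitely, shows all outer hulls $Z_t$ lie in $\mathrm{fil}(\mathcal I)$, and then uses the P-point and Q-point properties to produce $C\in\mathrm{fil}(\mathcal I)\cap\mathscr A_\delta$ and a condition in $\mathscr A_\delta$ forcing $\check C\subseteq\dot X$, contradicting the assumption.
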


Before beginning let us note that in $L[Y]$ the set $\mathrm{fil}(\mathcal I)$ is Ramsey. To see this note first that, by $\CH$ in the ground model ${\rm fil}(\mathcal I)$ is generated by a $\subseteq^*$-decreasing sequence of size $\omega_1$, say $\{A_\gamma \; | \; \gamma < \omega_1\}$ and hence by Lemma \ref{preservation1} given any family of countably many $\{X_n\}_{n < \omega} \subseteq {\rm fil}(\mathcal  I)$ there is an $A_\gamma$ almost contained in all of them so ${\rm fil}(\mathcal I)$ is a P-set. Moreover by the fact that the forcing is $\baire$-bounding it is a Q-set as well.

\begin{proof}
Work in $L[Y]$ and assume for contradiction that there exists a $C(Y)$-name $\dot{X}$ for such an $X$, together with a $p\in C(Y)$ such that
\begin{align}\label{pline}p\Vdash ``\dot{X}\notin\mathrm{fil}(\mathcal{I})\text{ and }|\dot{X}\cap\mathcal{I}^h|=\aleph_0\textnormal{ for all }h\in \mathrm{FF}(\mathcal{I})\text{''}.\end{align}
Applying Lemma \ref{preprocessed} we can find a $q\leq p$ in $C(Y)$ and a family $\mathbf{x}(q):=\langle x_t\mid t\in \mathrm{Split}(q)\rangle$ such that for all $t\in\mathrm{Split}_n(q)$,
$$q_t\Vdash``x_t\text{ is the characteristic function of }\dot{X}\cap n\text{''}.$$ Moreover, again by Lemma \ref{preprocessed}, we may, if desired, also assume that $\mathbf{x}(q)\in \mathcal{A}_{\delta}$ with $|q| = \delta$. 

\begin{claim}
For each $t\in\mathrm{Split}(q)$, let $Z_t=\{m\in\omega\mid q_t \not\Vdash m\not\in\dot{X}\}$. Then $$Z_t=\bigcup_{s\sqsupseteq t} x_s^{-1}(1).$$ In consequence, any model $\mathcal{A}_\delta$ containing $q$ and $\mathbf{x}(q)$ can compute $Z_t$, for all $t\in\mathrm{split}(q)$.
\label{keyclaimIND}
\end{claim}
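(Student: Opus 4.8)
The identity to prove is purely about the combinatorics of the tree $q$ and the name $\dot X$, once we know that $q$ is preprocessed in the sense of Lemma~\ref{preprocessed}. First I would prove the inclusion $\bigcup_{s\sqsupseteq t}x_s^{-1}(1)\subseteq Z_t$. Suppose $m\in x_s^{-1}(1)$ for some splitting node $s\sqsupseteq t$, say $s\in\mathrm{Split}_n(q)$; then by the defining property of $\mathbf{x}(q)$ we have $q_s\Vdash m\in\dot X$ (note $m<n$ since $x_s$ is the characteristic function of $\dot X\cap n$, and $x_s(m)=1$). Since $q_s\leq q_t$, the condition $q_s$ witnesses that $q_t\not\Vdash m\notin\dot X$, i.e.\ $m\in Z_t$.

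For the reverse inclusion, take $m\in Z_t$, so some $r\leq q_t$ forces $m\in\dot X$. Extend $r$ (inside $C(Y)$, or just pass to any branch through $r$) to find a branch $x\in[q_t]$ passing through $r$'s stem far enough; more carefully, since $r\leq q_t$ and $r$ is a perfect tree, pick any node $u\in r$ with at least $m+1$ predecessors that are splitting in $q$, and let $s\in\mathrm{Split}(q)$ be the $(m+1)$-st splitting node of $q$ below $u$ (such an $s$ exists and satisfies $s\sqsupseteq t$, since $u$ extends the stem of $r\leq q_t$). Then $s\in\mathrm{Split}_n(q)$ for some $n>m$, and $q_s\geq r_u$ where $r_u\leq r$ still forces $m\in\dot X$; hence $q_s$ does not force $m\notin\dot X$. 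But $q_s$ \emph{decides} $\dot X\cap n$ (as $q$ is preprocessed and $m<n$), so $q_s\Vdash m\in\dot X$, which means $x_s(m)=1$, i.e.\ $m\in x_s^{-1}(1)$ with $s\sqsupseteq t$. This proves $Z_t\subseteq\bigcup_{s\sqsupseteq t}x_s^{-1}(1)$.

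The ``in consequence'' clause is then immediate: if $\mathcal A_\delta\ni q,\mathbf{x}(q)$, then $\mathcal A_\delta$ can, for each $t\in\mathrm{Split}(q)$, simply enumerate the (internally computable) set of splitting nodes $s\sqsupseteq t$ of $q$ and take the union of the finite sets $x_s^{-1}(1)$; this is a definable-over-$\mathcal A_\delta$ recipe producing $Z_t$, using only $q$ and the function $t\mapsto x_t$ that Lemma~\ref{preprocessed} guarantees lies in $\mathcal A_\delta$.

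**Main obstacle.** The one point requiring care is the reverse inclusion, specifically the step of converting an arbitrary condition $r\leq q_t$ forcing $m\in\dot X$ into an \emph{$(m{+}1)$-splitting node of $q$} whose associated condition forces $m\in\dot X$; this rests on the fact that every branch of every $r\leq q$ is a branch of $q$, so the splitting nodes of $q$ along any branch are cofinal in that branch, and on the preprocessing property $q_s\mid\!\mid\dot X\cap n$ for $s\in\mathrm{Split}_n(q)$. Everything else is bookkeeping. I would keep the argument at this level of detail and not belabor the (routine) verification that $|q|=\delta$ and $\mathbf{x}(q)\in\mathcal A_\delta$ are compatible, since that is exactly the content of Lemma~\ref{preprocessed}.
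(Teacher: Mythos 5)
Your forward inclusion and the ``in consequence'' clause are fine and match the paper. The problem is in the reverse inclusion: you let $s$ be the $(m{+}1)$-st splitting node of $q$ below $u$ (so $s\in\mathrm{Split}_{m+1}(q)$ and $s\sqsubseteq u$) and assert parenthetically that $s\sqsupseteq t$. That assertion is false whenever $t$ itself lies in $\mathrm{Split}_n(q)$ for some $n>m+1$: in that case every $u\in r$ extending $t$ already has at least $m+1$ splitting predecessors strictly below $t$, so the $(m{+}1)$-st splitting node below $u$ is a \emph{proper initial segment} of $t$, no matter how $u$ is chosen. Your construction then produces an $s$ with $x_s(m)=1$ but $s\not\sqsupseteq t$, which does not witness membership in $\bigcup_{s\sqsupseteq t}x_s^{-1}(1)$; so the inclusion $Z_t\subseteq\bigcup_{s\sqsupseteq t}x_s^{-1}(1)$ is not established for such $t$ and $m$.

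The repair is the case distinction the paper makes explicitly. If $t\in\mathrm{Split}_n(q)$ with $m<n$, then preprocessing says $q_t$ itself decides $\dot X\cap n$, hence decides $m$; since $r\leq q_t$ forces $m\in\dot X$, the decision must be positive, so $x_t(m)=1$ and $s=t$ already witnesses the inclusion. In the remaining case $n\leq m$, every $(m{+}1)$-splitting node of $q$ lying in $r$ does extend $t$, and there your argument is correct and is essentially the paper's: $q_s$ decides $m$ by preprocessing, and compatibility of $q_s$ with $r$ (via $r_u\leq q_s$) forces $x_s(m)=1$. With that one extra case your proof coincides with the paper's.
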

\begin{proof}[Proof of Claim \ref{keyclaimIND}] If $m\in\bigcup_{s\sqsupseteq t} x_s^{-1}(1)$ then $x_s(m)=1$ for some $s\sqsupseteq t$; it then follows from $q_s\leq q_t$ and $q_s\Vdash m\in\dot{X}$ that $m\in Z_t$. To see the reverse inclusion, namely that $$Z_t\subseteq\bigcup_{s\sqsupseteq t} x_s^{-1}(1),$$ suppose that $r\leq q_t$ forces ``$m\in\dot{X}$''; suppose also that $t\in\mathrm{Split}_n(q)$ for some $n\leq m$ (for $m<n$ the inclusion is clear, since $m\in Z_t$ if and only if $x_t(m)=1$). Then for all $s\in r\cap\mathrm{Split}_{m+1}(q)$, we must have $x_s(m)=1$. For if this were not the case for some $s$ then $q_s$, and hence $r_s$, would force ``$m\not\in\dot{X}$'' --- but since $r_s\leq r$, this would entail contradiction.
\end{proof}

\begin{claim}\label{lemmahullsinfilter}
$Z_t\in\mathrm{fil}(\mathcal{I})$ for all $t\in\mathrm{Split}(q)$.
\end{claim}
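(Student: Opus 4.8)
The plan is to derive $Z_t\in\mathrm{fil}(\mathcal{I})$ from dense maximality of $\mathcal{I}$ in $V=L[Y]$: as observed above, dense maximality is equivalent to $P(\omega)=\mathrm{fil}(\mathcal{I})\cup\mathrm{id}(\mathcal{I})$, and this union is disjoint because $\mathrm{fil}(\mathcal{I})$ is a proper filter and $\mathrm{id}(\mathcal{I})$ a proper ideal. Hence it is enough to verify that $Z_t\notin\mathrm{id}(\mathcal{I})$, and for that it suffices (taking the empty condition $h'=\emptyset$ in the definition of $\mathrm{id}(\mathcal{I})$) to show that $\mathcal{I}^h\cap Z_t$ is infinite for every $h\in\mathrm{FF}(\mathcal{I})$.

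To prove this, first note that $q_t$ is a condition of $C(Y)$ --- a perfect subtree of $q$ lying in $\mathcal{A}_\delta$, all of whose branches code $Y$ below $\delta=|q|$, exactly as in the earlier manipulations of $q_t$ --- and that $q_t\Vdash\dot{X}\subseteq\check{Z}_t$. The latter holds because for every $m\notin Z_t$ we have $q_t\Vdash \check{m}\notin\dot{X}$ by definition of $Z_t$, and conjoining these over all such $m$ gives $q_t\Vdash\dot{X}\subseteq\check{Z}_t$. Consequently, for any $h\in\mathrm{FF}(\mathcal{I})$ we get $q_t\Vdash \check{\mathcal{I}}^h\cap\check{Z}_t\supseteq\dot{X}\cap\check{\mathcal{I}}^h$; since $q_t\leq q\leq p$, line~\eqref{pline} gives $q_t\Vdash|\dot{X}\cap\check{\mathcal{I}}^h|=\aleph_0$, and therefore $q_t\Vdash|\check{\mathcal{I}}^h\cap\check{Z}_t|=\aleph_0$. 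As $\mathcal{I}^h\cap Z_t$ is a ground-model set and forcing does not alter the finiteness of such a set, $\mathcal{I}^h\cap Z_t$ is infinite in $V$. Ranging over all $h$, this gives $Z_t\notin\mathrm{id}(\mathcal{I})$, hence $Z_t\in\mathrm{fil}(\mathcal{I})$.

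I do not expect a real obstacle; the only care needed is the two absoluteness points just used (that $q_t$ genuinely lies in $C(Y)$, already tacit above, and that an assertion of infinitude forced over a fixed ground-model set reflects to $V$). If one prefers to sidestep the ultrafilter property of $\mathrm{fil}(\mathcal{I})$, the same conclusion follows contrapositively: if $\omega\setminus Z_t\in\mathrm{fil}(\mathcal{I})^V$ then, passing to a generic $H\ni q_t$ and using that membership in $\mathrm{fil}(\mathcal{I})$ is upward absolute (the witnessing $h$ transfers, $\mathcal{I}^h$ and finiteness being unchanged) together with $\omega\setminus\dot{X}_H\supseteq\omega\setminus Z_t$, we would get $\omega\setminus\dot{X}_H\in\mathrm{fil}(\mathcal{I})^{V[H]}$, i.e.\ $\mathcal{I}^h\cap\dot{X}_H$ finite for some $h\in\mathrm{FF}(\mathcal{I})$ --- contradicting the second conjunct of~\eqref{pline}. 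Either route closes Claim~\ref{lemmahullsinfilter}.
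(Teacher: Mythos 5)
Your proof is correct and follows essentially the same route as the paper: the key points in both are that $q_t\leq p$ forces $\dot{X}\subseteq\check{Z}_t$ and that dense maximality of $\mathcal{I}$ in $V$ yields the dichotomy $P(\omega)=\mathrm{fil}(\mathcal{I})\cup{}$(co-$\mathcal{I}^h$ sets/$\mathrm{id}(\mathcal{I})$), which line~\eqref{pline} then settles in favor of $\mathrm{fil}(\mathcal{I})$. The only difference is cosmetic: the paper argues by contradiction via Lemma~\ref{lemma0i} (producing an $h$ with $\mathcal{I}^h\cap Z_t=\emptyset$), whereas you argue directly that $\mathcal{I}^h\cap Z_t$ is infinite for every $h$, so $Z_t\notin\mathrm{id}(\mathcal{I})$.
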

\begin{proof}[Proof of Claim \ref{lemmahullsinfilter}]
Suppose for contradiction that the lemma fails for some $Z_t$. Then by our inductive assumption together with Lemma \ref{lemma0i}, there exists an $h\in\mathrm{FF}(\mathcal{I})$ with $\mathcal{I}^h\cap Z_t=\emptyset$. But since $q_t\leq p$ and $q_t\Vdash \dot{X}\subseteq Z_t$, this contradicts line \ref{pline} above. 
\end{proof}
As noted before the beginning of this proof  $\mathrm{fil}(\mathcal{I})$ is a P-set  (again this also follows immediately from Lemma \ref{preservation1}, together with the fact that, by construction, $\mathcal{I}$ has an $\subseteq^*$-decreasing filterbase of ordertype $\omega_1$). It then follows from Claim \ref{lemmahullsinfilter} that there exists a pseudo-intersection $B\in\mathrm{fil}(\mathcal{I})$ of $\{Z_t\mid t\in\mathrm{Split}(q)\}$. Moreover, by elementarity we can assume that such a $B$ exists in $\mathcal A_\delta$.

Any such $B$ determines a function $f:\omega\to\omega$ given by $$n\mapsto\mathrm{max}\;\big(B\backslash\cap\{Z_t\mid t\in\mathrm{Split}_j(q)\textnormal{ and }j\leq n+1\}\big).$$
Since $\mathrm{fil}(\mathcal{I})$ is a Q-set there will then exist a $C\subseteq B$ in $\mathrm{fil}(\mathcal{I})$ --- and hence, again by elementarity, a $C\subseteq B$ in $\mathrm{fil}(\mathcal{I})\cap\mathcal{A}_{\delta}$ --- such that if $\langle k(n)\mid n\in\omega\rangle$ enumerates $C\cup\{0\}$ then
$$f(k(n))<k(n+1)$$
for all $n\in\omega$; without loss of generality $f(1)<k(1)$ as well.

We describe now how to construct from such a $k$, $q$, and $\mathbf{x}(q)$ in $\mathcal{A}_{\delta}$ an $r\in\mathcal{C}(Y)$ below $q$ which forces ``$C\subseteq\dot{X}$''; observe that the existence of such an $r$ contradicts our assumptions about $\dot{X}$, thereby concluding the proof.


For each $t\in\mathrm{Split}_0(q)$ and $i\in\{0,1\}$ extend $t^\frown i$ to a $s(t,i)\in\mathrm{Split}_{k(1)}(q)$ forcing ``$k(1)\in\dot{X}$'' (such exists by construction; for uniformity, here and throughout take always the leftmost such $s(t,i)$). Let $q_1=\bigcup_{i\in 2} q_{s(t,i)}$ and note that $q_1\leq_0 q_0=q$. Continue in this fashion, at stage $n$ choosing a $s(t,i)\in q_n\cap\mathrm{Split}_{k(n+1)}(q)$ extending $t^\frown i$, for each $t\in\mathrm{Split}_n(q_n)$ and $i\in\{0,1\}$. Let $$q_{n+1}=\bigcup_{(t,i)\in\mathrm{Split}_n(q_n)\times 2}(q_n)_{s(t,i)}$$ and observe that $q_{n+1}\leq_n q_n$ and that the fusion $r$ of the $q_n$s so defined forces ``$C\in\dot X$'', as desired. Observe lastly that $r$ is constructible from $k$, $q$, and $\mathbf{x}(q)$, ensuring that $r\in\mathcal{A}_{\delta}$ and hence that $|r|\leq\delta$; this ensures that $r$ is indeed a condition of $\mathcal{C}(Y)$.
\end{proof} 

Given the proof of Lemma \ref{INDCYlemma}, we can now finish the proof of Theorem \ref{mainthmIND}.

\begin{proof}[Proof of Theorem \ref{mainthmIND}]
The proof is by induction on $\alpha < \omega_2$. There are two cases, depending on whether $\alpha$ is a limit or a successor ordinal. As noted above, $\mathrm{fil}(\mathcal I)$ is Ramsey in $V^{\P_\alpha}$ for all $\alpha \leq \omega_2$ so we need to ensure simply that $\mathcal I$ remains densely maximal.

\underline{Case 1: $\alpha = \xi + 1$ for some $\xi$}. This follows from Lemma \ref{INDCYlemma} noting that no other step adds reals.

\underline{Case 2: $\alpha$ is a limit ordinal}. We have already discussed that ${\rm fil}(\mathcal I)$ remains Ramsey throughout the iteration so it remains to see that $\mathcal I$ is densely maximal in $V^{\P_\alpha}$. Observe first that for all $\beta\leq\alpha$, $$V^{\mathbb{P}_\beta}\vDash ``\mathrm{fil}(\mathcal{I})=\langle\mathrm{fil}(\mathcal{I})\cap V\rangle_{\mathrm{up}}\text{''},$$
by Lemma \ref{preservation1} above. By Lemma \ref{lemma0i}, our inductive assumption then takes the form
$$V^{\mathbb{P}_\beta}\vDash P(\omega)=\langle\mathrm{fil}(\mathcal{I})\cap V\rangle_{\mathrm{up}}\cup\langle\omega\backslash\mathcal{I}^h\mid h\in\mathrm{FF}(\mathcal{I})\rangle_{\mathrm{dn}}$$
for all $\beta<\alpha$. By Theorem \ref{Shelah_preservation},
$$V^{\mathbb{P}_\alpha}\vDash P(\omega)=\langle\mathrm{fil}(\mathcal{I})\cap V\rangle_{\mathrm{up}}\cup\langle\omega\backslash\mathcal{I}^h\mid h\in\mathrm{FF}(\mathcal{I})\rangle_{\mathrm{dn}}$$ must hold as well. Again apply Lemma \ref{lemma0i} to conclude that $\mathcal{I}$ is densely maximal in $V^{\mathbb{P}_\alpha}$.
\end{proof}

\section{Applications and Models}
Having proved these three preservation theorems we can now turn to applications. The first is an immediate consequence of of Theorems \ref{ppointthm}, \ref{MADthm} and \ref{mainthmIND}.
\begin{theorem}
Let $G \subseteq \P_{\omega_2}$ be generic over $L$. In $L[G]$ we have $\mfa = \mfu = \mfi = \aleph_1 < 2^{\aleph_0} = \aleph_2$, with each cardinal characteristic witnessed by a $\Pi^1_1$ set and a $\Delta^1_3$ well-order.
\label{alltogetherthm}
\end{theorem}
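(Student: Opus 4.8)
The plan is to assemble Theorem~\ref{alltogetherthm} by running the three preservation results through the \emph{same} generic $G$ and checking there is no conflict between them. Let $G\subseteq\P_{\omega_2}$ be generic over $L$ and work in $L[G]$. First I would recall that, by the fixed choices made in Sections~3--5, $L$ contains: a $\Pi^1_1$ ultrafilter base $\hat{\scrU}$ for a P-point $\scrU$ with $\ZFC\proves\hat\scrU\subseteq L$; a $\Pi^1_1$ tight MAD family $\calA$ with $\ZFC\proves\calA\subseteq L$; and a $\Sigma^1_2$ selective independent family $\mathcal I\subseteq L$. By Theorems~\ref{ppointthm}, \ref{MADthm} and~\ref{mainthmIND}, in $L[G]$ the set $\hat\scrU$ still generates a P-point, $\calA$ is still a tight (hence MAD) family, and $\mathcal I$ is still selective (hence maximal). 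Since each of these objects has size $\aleph_1$ in $L$ and none of the forcings involved collapse $\omega_1$ (indeed $\P_{\omega_2}$ is $S$-proper, hence proper), they retain cardinality $\aleph_1$ in $L[G]$. Consequently $\mfu\le\aleph_1$, $\mfa\le\aleph_1$, $\mfi\le\aleph_1$; and as all three characteristics are uncountable in \ZFC, $\mfa=\mfu=\mfi=\aleph_1$ in $L[G]$. Finally, $\P_{\omega_2}$ forces $2^{\aleph_0}=\aleph_2$ and, by Theorem~1 of~\cite{FF10} as recalled above, adds a $\Delta^1_3$ well-order of the reals; so $\aleph_1<2^{\aleph_0}=\aleph_2$ holds too, and the well-order is present.

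The one genuinely structural point to address is that the three preservation arguments must all apply to the \emph{one} iteration $\P_{\omega_2}$ simultaneously, not to three different iterations. Here I would observe that all three proofs were carried out for the forcing $\P_{\omega_2}$ exactly as defined in Subsection~2.4: the only iterands that add reals are the Sacks-coding posets $C(Y)$ (the $\dot\Q^0_\alpha$'s may be taken trivial for this theorem, since we are not weaving in any extra cardinal-characteristic inequality), and for each of the three objects the corresponding $C(Y)$-preservation lemma (Lemma~\ref{mainlemmappoint}, Lemma~\ref{mainlemmaMAD} via its local form, and Lemma~\ref{INDCYlemma}) together with the relevant iteration theorem (\cite[Theorem~21.11]{Hal17} for P-points, \cite[Propositions 30 and 31]{restrictedMADfamilies} for tight MAD families, and Theorems~\ref{DOiterationthm} and~\ref{Shelah_preservation} for selective independent families) handles the full countable support iteration of $S$-proper posets. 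Crucially these are independent inductions on $\gamma\le\omega_2$ that each refer only to the fixed data in $L$ and to the generic through stage $\gamma$; there is no interference, so all three run along the same $G$. The requisite elementarity of the $\vec\mu$-sequence was built in precisely so that all of these $\Pi^1_1$/$\Sigma^1_2$ properties (and the two-to-three extra quantifiers needed to state P-point-ness, tightness, dense maximality and the Ramsey property of the density filter) are correctly computed at each level $\mathscr A_i$.

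The remaining step is the bookkeeping that turns ``these three sets are still witnesses'' into the stated complexity bounds. For $\mfu$: $\hat\scrU$ is $\Pi^1_1$ \emph{as a subset of the reals of $L[G]$} because it is a $\Pi^1_1$ set provably contained in $L$ and, being a subset of $[\omega]^\omega\cap L$, is unchanged by forcing; so the same $\Pi^1_1$ formula that defines it in $L$ defines it in $L[G]$. Identical remarks apply to $\calA$ (also $\Pi^1_1$, provably $\subseteq L$). For $\mfi$: $\mathcal I$ is $\Sigma^1_2$ and $\subseteq L$, and by the result of~\cite{DefMIF} quoted in Section~5, the existence of a $\Sigma^1_2$ maximal independent family yields the existence of a $\Pi^1_1$ one; I would invoke this to upgrade the witness for $\mfi$ to $\Pi^1_1$. (One should note $\Pi^1_1$ is optimal for all three, by the relevant absoluteness/Shoenfield-type obstructions cited in the introduction, and $\Delta^1_3$ is optimal for a well-order since a $\Sigma^1_2$, indeed $\boldsymbol{\Sigma}^1_2$, well-order is impossible under the presence of sufficiently many reals.) Collecting all of this gives the theorem. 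I do not expect a serious obstacle: the hard work is entirely in Sections~3--5, and the present theorem is the conjunction of those results along one generic, the only thing to verify being the (immediate) compatibility of the three preservation inductions with the single iteration $\P_{\omega_2}$.
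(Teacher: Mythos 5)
Your proposal is correct and follows essentially the same route as the paper, which derives Theorem \ref{alltogetherthm} directly as an immediate consequence of Theorems \ref{ppointthm}, \ref{MADthm} and \ref{mainthmIND} together with the $\Delta^1_3$ well-order and $2^{\aleph_0}=\aleph_2$ facts about $\P_{\omega_2}$. The extra detail you supply (compatibility of the three preservation inductions along the single iteration, and the $\Sigma^1_2$-to-$\Pi^1_1$ upgrade for the independent family via \cite{DefMIF}) is exactly the bookkeeping the paper treats as immediate.
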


Note that since $C(Y)$ has the Sacks property, in $L[G]$ we have that $cof(\Null)$, and hence all the cardinals in Cicho\'{n}'s diagram are $\aleph_1$ as well. 

We can also mix and match the cardinal characteristics while still preserving coanalytic witnesses.

\begin{theorem}
The following are consistent with $2^{\aleph_0} = \aleph_2$ and a $\Delta^1_3$ well-order of the reals.
\begin{enumerate}
\item
$\mfa = \mfu = \aleph_1 < \mfi = \aleph_2$ and there are a $\Pi^1_1$ tight MAD family of size $\aleph_1$ and a $\Pi^1_1$ ultrafilter base for a P-point of size $\aleph_1$.
\item
$\mfa = \mfi  = \aleph_1 < \mfu = \aleph_2$ and there are a $\Pi^1_1$ tight MAD family of size $\aleph_1$ and a $\Pi^1_1$ maximal independent family of size $\aleph_1$.
\item
$\mfa =\aleph_1 < \mfi = \mfu = \aleph_2$ and there is a $\Pi^1_1$ tight MAD family. 
\end{enumerate}
\label{cardchar}
\end{theorem}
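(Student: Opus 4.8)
The plan is to prove Theorem \ref{cardchar} by weaving appropriate reals-adding iterands into the $\dot{\Q}^0_\alpha$ slots of $\P_{\omega_2}$, so that the relevant small cardinal characteristics stay $\aleph_1$ (with their $\Pi^1_1$ witnesses preserved by the preservation theorems of Sections 3--5) while the designated large one is pushed up to $\aleph_2$. Recall from Section 2 that each iterand $\dot\Q_\alpha$ factors as $\dot\Q^0_\alpha * \dot\Q^1_\alpha$, where $\dot\Q^0_\alpha$ is an arbitrary proper forcing of size $\le\aleph_1$ and $\dot\Q^1_\alpha$ carries the $\Delta^1_3$-coding apparatus; the Sacks-property fact guarantees that if every $\dot\Q^0_\alpha$ has the Sacks property then so does $\P_{\omega_2}$, in particular $\P_{\omega_2}$ remains $\baire$-bounding. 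So the strategy in each case is: fix a book-keeping function enumerating the relevant objects of size $\aleph_1$ appearing in intermediate extensions, and at stage $\alpha$ let $\dot\Q^0_\alpha$ be the forcing that destroys the maximality of the object handed to us by the book-keeping, chosen among posets having the Sacks property (or at least being $\baire$-bounding and proper) so as not to disturb the preserved witnesses.

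Concretely, for item $(1)$ I would, at the stages assigned by book-keeping, let $\dot\Q^0_\alpha$ be Shelah's creature/Sacks-style forcing adding a maximal-independent-family-destroying real --- e.g.\ the ${}^\omega\omega$-bounding forcing used in \cite{Sh92}/\cite{FM19} that diagonalizes the density filter of a given independent family --- thereby forcing $\mfi=\aleph_2$, while the P-point $\scrU$ and the tight MAD family $\calA$ are preserved by Lemma \ref{mainlemmappoint} (iterated via \cite[Theorem 21.11]{Hal17}) and Lemmas \ref{mainlemmaMAD}, \ref{limitlemmaMAD}, \ref{succlemmaMAD}, since those preservation arguments only require the iterands (here the $\dot\Q^0_\alpha$) to be proper and not add the relevant bad reals --- which one checks directly for the chosen forcing. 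For item $(2)$ one instead weaves in a $\sigma$-centered (hence Sacks-property-compatible after suitable care, or at least $\baire$-bounding proper) forcing such as $\mathbb M_{\scrU}$-style or Mathias-type forcing relative to $\scrU$ that adds a real not captured by $\scrU$, raising $\mfu$ to $\aleph_2$, while preserving $\calA$ (Lemmas \ref{mainlemmaMAD}--\ref{succlemmaMAD}) and the selective independent family $\mathcal I$ (Theorem \ref{mainthmIND}, whose limit step via Theorem \ref{Shelah_preservation} and successor step via Lemma \ref{INDCYlemma} only use properness and $\baire$-bounding of the extra iterands). For item $(3)$ one alternates: at some cofinal set of stages weave in the $\mfi$-destroying forcing of $(1)$, at another cofinal set the $\mfu$-destroying forcing of $(2)$, so that $\mfu=\mfi=\aleph_2$; the tight MAD family $\calA$ is still preserved by the same lemmas. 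In every case $\mfa=\aleph_1$ follows because $\calA$ is a $\Pi^1_1$ tight MAD family in $L[G]$ by Theorem \ref{MADthm}, the $\Delta^1_3$ well-order survives because the coding iterands $\dot\Q^1_\alpha$ are untouched (Theorem 2.10), and $2^{\aleph_0}=\aleph_2$ holds since $\P_{\omega_2}$ is $\omega_2$-c.c.\ of length $\omega_2$ and adds reals cofinally.

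The main obstacle --- and the only place requiring genuine care rather than bookkeeping --- is verifying that the extra iterands $\dot\Q^0_\alpha$ fall under the hypotheses of the three preservation theorems we want to invoke simultaneously. The P-point preservation of \cite[Theorem 21.11]{Hal17} and the Ramsey-ness arguments need $\baire$-bounding; the tight-MAD preservation (Lemmas \ref{limitlemmaMAD}, \ref{succlemmaMAD}) needs the iterands to admit $(M,\Q,\calA,B)$-generic conditions, which is not automatic for an arbitrary proper poset and must be checked for each chosen forcing (this is exactly the ``definable witness'' subtlety emphasized in the introduction); and the selective-independent-family preservation needs the ``new dense open subsets of $2^{<\omega}$ contain old ones'' property of Theorem \ref{DOiterationthm}, which again is a nontrivial property of the specific iterand. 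So the real content is to select, for each of the three constellations, a concrete forcing notion known from the literature (Shelah's forcing from \cite{Sh92} for $\mfi$, a suitably bounded ultrafilter-diagonalization forcing for $\mfu$) and cite or re-verify that it has \emph{all} of: properness, the Sacks/$\baire$-bounding property, the tight-MAD generic-condition property, and --- when $\mathcal I$ is to be kept --- the dense-open-contained property. Once those compatibility checks are in place, the induction on $\alpha<\omega_2$ mirrors exactly the inductions in the proofs of Theorems \ref{ppointthm}, \ref{MADthm}, \ref{mainthmIND}, with the limit steps handled by \cite[Theorem 21.11]{Hal17}, Lemma \ref{limitlemmaMAD}, and Theorem \ref{Shelah_preservation} respectively, and the result follows.
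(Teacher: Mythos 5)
Your overall skeleton — weave the extra forcings into the $\dot{\Q}^0_\alpha$ slots, let bookkeeping hand you the objects to be handled, and invoke the three preservation theorems — is exactly the paper's strategy, but the concrete iterands you choose are wrong, and in a way that breaks the cardinal arithmetic rather than just leaving a verification open. For item (1) you propose the forcing of \cite{Sh92}, describing it as ``adding a maximal-independent-family-destroying real''; in fact Shelah's $\Q_{\mathcal I}$ does the opposite: it \emph{preserves} the (dense) maximality of the selective independent family $\mathcal I$ and \emph{increases} $\mfu$. Iterating it cofinally would force $\mfu=\aleph_2$, so the $\Pi^1_1$ base $\hat{\scrU}$ of size $\aleph_1$ could not generate an ultrafilter in the final model, contradicting the very constellation $\mfa=\mfu=\aleph_1<\mfi$ you are trying to realize. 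The paper instead uses Miller forcing for item (1): it raises $\mfd$, hence $\mfi$ (since $\mfd\le\mfi$), while preserving P-points (\cite[Lemma 25.5]{Hal17}) and strongly preserving tightness of MAD families (\cite{restrictedMADfamilies}), so no new successor-step lemma is needed.

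For item (2) the situation is reversed: a Mathias-type forcing relative to $\scrU$ is not ``$\baire$-bounding proper'' --- $\sigma$-centeredness does not give bounding, and $\mathbb M_{\scrU}$ adds a dominating real. That destroys the hypotheses of Theorem \ref{Shelah_preservation} (which needs ${}^\omega\omega$-bounding iterands) and of the Q-point/Ramsey arguments, and worse, it forces $\mfd=\aleph_2\le\mfi$, contradicting $\mfi=\aleph_1$. Moreover Lemma \ref{INDCYlemma} only covers $C(Y)$; for the woven-in iterand you need a forcing that itself preserves dense maximality of $\mathcal I$ while killing small ultrafilter bases, and that is precisely what Shelah's $\Q_{\mathcal I}$ of \cite{Sh92} is built to do (it also strongly preserves tight MAD families by \cite{CFGS21}). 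So the correct assignment is: Miller forcing for item (1), $\Q_{\mathcal I}$ for item (2), and an alternation of the two for item (3); your proposal has the two key forcings essentially swapped and misdescribed, which is the substantive content of the theorem rather than a routine compatibility check.
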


\begin{proof}
Each of these follows from our preservation theorems alongside the right choice of ``woven in" forcing for $\dot{\Q}^0_\alpha$. For the first one we can use Miller forcing, which, since it increases $\mfd$, will force $\mfi = 2^{\aleph_0}$. However Miller forcing preserves P-points \cite[Lemma 25.5]{Hal17} and strongly preserves the tightness of any MAD family \cite{restrictedMADfamilies} so the coanalytic witnesses to $\mfa = \mfu = \aleph_1$ will be preserved. For the second model we force with Shelah's forcing $\Q_\mathcal I$ from \cite{Sh92} alongside an appropriate bookkeeping device. This forcing will preserve a selective independent family, strongly preserve the tightness of any tight MAD family (\cite{CFGS21}) but increases $\mfu$. For the final model we alternate between Miller forcing and $\Q_\mathcal I$ forcing.
\end{proof}

\section{Conclusion and Open Questions}
In light of Theorem \ref{alltogetherthm}, it is reasonable to ask about whether in $V^{\P_{\omega_2}}$ there are optimal complexity witnesses to ``maximal" sets of reals of size $\aleph_2$.

\begin{question}
What is the lowest projective complexity of a MAD family, a maximal independent family or an ultrafilter base of size $\aleph_2$ in $V^{\P_{\omega_2}}$? Could there be co-analytic witnesses for any of these? What about in the models discussed in Theorem \ref{cardchar}?
\end{question}

We can also ask about the possibility of coanalytic witnesses to $\mfa$, $\mfi$ and $\mfu$ in a model of $2^{\aleph_0} > \aleph_2$ alongside a $\Delta^1_3$ well-order. Such a model was constructed in \cite{FFZ11}.

\begin{question}
Is it consistent that $2^{\aleph_0} > \aleph_2$ alongside a $\Delta^1_3$ well-order of the reals and $\mfa = \mfu = \mfi = \aleph_1$ all with coanalytic witnesses? 
\end{question}
The model of \cite{FFZ11} was built using finite support iteration of ccc posets, entailing that $\mfi = 2^{\aleph_0}$. It seems difficult to avoid increasing $\mfi$ while forcing $2^{\aleph_0} > \aleph_2$ in this way.

\end{document}